\newcommand{\C}{C}
\DeclareMathOperator\sign{sign}
\newcommand{\Irr}{{\rm Irr}}
\DeclareMathOperator\lcm{lcm}
\DeclareMathOperator\id{id}
\def\CC{{\mathbb C}}
\def\NN{{\mathbb N}}
\def\ZZ{{\mathbb Z}}
\newcommand{\blambda}{{\underline\lambda}}
\newcommand{\bmu}{{\underline\mu}}
\newcommand{\br}{{\underline r}}
\newcommand{\bs}{{\underline s}}
\newcommand{\bt}{{\underline t}}
\newcommand{\bc}{{\underline c}}
\newcommand{\ve}{\varepsilon}
\newcommand{\ch}{{\operatorname{ch}}}
\newcommand{\roots}{\rho}
\newcommand{\sroots}{{\overline{\rho}}}
\newcommand{\spsi}{{\overline{\psi}}} 
\newcommand{\sk}{{k,-}}
\DeclareMathOperator\Conj{Conj}
\theoremstyle{plain}
\newtheorem{theorem}{Theorem}[section]
\newtheorem{proposition}[theorem]{Proposition}
\newtheorem{lemma}[theorem]{Lemma}
\newtheorem{corollary}[theorem]{Corollary}
\newtheorem{conjecture}[theorem]{Conjecture}
\newtheorem{problem}[theorem]{Problem}
\theoremstyle{definition}
\newtheorem{definition}[theorem]{Definition}
\newtheorem{question}[theorem]{Question}
\newtheorem{remark}[theorem]{Remark}
\newtheorem{observation}[theorem]{Observation}
\newcommand{\todo}[1]{\vspace{5 mm}\par \noindent
	\marginpar{\textsc{ToDo}} \framebox{\begin{minipage}[c]{0.9
				\textwidth}
			#1 \end{minipage}}\vspace{5 mm}\par}
\title[Higher Lie characters and root enumeration]{Higher Lie characters and root enumeration\\ in classical Weyl groups}
\author{Ron M.\ Adin}
\address{Department of Mathematics, Bar-Ilan University, 
Ramat-Gan 52900, Israel}
\email{radin@math.biu.ac.il}
\author{P\'al Heged\"us}
\address{Department of Algebra and Geometry, Institute of Mathematics, Budapest University of Technology and Economics, M\H uegyetem rkp. 3., H-1111 Budapest, Hungary}
\email{hegpal@math.bme.hu}
\author{Yuval Roichman}
\address{Department of Mathematics, Bar-Ilan University, 
Ramat-Gan 52900, Israel}
\email{yuvalr@math.biu.ac.il}
\date{October 30, 2024}
\thanks{RMA and YR were partially supported by The Israel Science Foundation, grant no.~1970/18. 
PH was partially supported by National Research, Development and Innovation Office - NKFIH (K138596).
}
\begin{document}
\maketitle

\begin{abstract} 
	We prove that, for any integer $k$, the $k$-th root enumerator in the classical Weyl group of type $D$ is a proper character.
	The proof uses higher Lie characters of type $B$. 
\end{abstract}

\tableofcontents

\section{Introduction}
\label{sec:introduction}

\subsection{Properness of root enumerators}
\label{subsec:intro_proper}

Let $G$ be a finite group. 
For every integer $k$ define the {\em $k$-th root enumerator} $\roots_k^G : G \to \ZZ$ by
\[
\roots_k^G(g) := |\{h \in G \,:\, h^k = g\}|.
\]
Clearly, $\roots_k^G$ is a class function on $G$. 
A classical result of Frobenius~\cite{Fr} (see also~\cite[Problem 4.7]{Isaacs}) shows that,  
for any finite group $G$, all root enumerators are virtual characters, 
namely: 
\[
\langle  \roots_k^G, \chi \rangle \in \ZZ \qquad (\forall \chi\in {\rm Irr}(G),\, k \in \ZZ),
\]
where ${\rm Irr}(G)$ is the set of all irreducible characters of $G$. 


The following problem is fundamental; see, e.g., \cite[pp.~101--102]{Scharf_thesis}. 


\begin{problem} 
Which finite groups $G$ have the property that all $k$-th root  enumerators $\roots^G_k$ are proper $G$-characters? 
Equivalently, for which finite groups $G$ is $\langle \roots_k^G, \chi \rangle \ge 0$ for all $\chi \in {\rm Irr}(G)$ and $k \in \ZZ$?
\end{problem}

Clearly, $\roots^G_0$ (the regular character) and $\roots^G_1$ (the trivial character) are proper characters for any finite group $G$.  
A complete characterization of the finite groups for which $\roots^G_2$ is a proper character 
was given in a celebrated work of Frobenius and Schur~\cite{FS}; see also~\cite[\S 4]{Isaacs}. 
In particular, if all the (complex) irreducible characters of $G$ can be obtained from real representations, then $\roots^G_2$ is a multiplicity-free sum of all the irreducible characters.

This paper deals with the classical Weyl groups;
for their classification into families ($A$, $B$ and $D$) see, e.g., \cite[\S 2.4]{Humphreys}. 
Kerber conjectured that, for the classical Weyl group of type $A_{n-1}$ (i.e., the symmetric group $S_n$ on $n$ letters), the $k$-th root enumerator is proper for all $n$ and $k$. 
This conjecture was proved by 
Scharf~\cite{Scharf}, presenting 
$\roots_k^{S_n}$ as a multiplicity-free sum of higher Lie characters (to be defined below).  
For a different (non-constructive) proof, see~\cite[Ex.~7.69.c]{ECII}. 
Scharf further proved~\cite[2.4.18]{Scharf_thesis} 
that, for any finite group $G$, the $k$-th root enumerator $\roots_k^{G \wr S_n}$ in the wreath product $G \wr S_n$ is a proper character, provided that $\roots^G_t$ is a proper character for all the divisors $t$ of $k$. 
In particular, all $k$-th root enumerators in the classical Weyl group of type $B_n$, isomorphic to $\ZZ_2 \wr S_n$, are proper. 
%
%
The problem remained open, for general $k$, for the third family of classical Weyl groups, those of type $D_n$.

Our main result is the following. 

\begin{theorem}\label{t:Dn-proper}
For any integer $k$ and positive integer $n$,  
the $k$-th root enumerator in the classical Weyl group of type $D_n$ is a proper character.    
\end{theorem}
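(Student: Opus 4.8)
The plan is to leverage the two facts already recalled in the introduction: (i) by Scharf's theorem, all $k$-th root enumerators $\roots_k^{B_n}$ in the hyperoctahedral group $B_n = \ZZ_2 \wr \symm_n$ are proper characters, and moreover Scharf's construction writes them as explicit nonnegative combinations of higher Lie characters of type $B$; and (ii) $D_n$ is an index-$2$ subgroup of $B_n$. The central idea is to relate $\roots_k^{D_n}$, a class function on $D_n$, to the restriction of $\roots_k^{B_n}$ to $D_n$, and then to decompose this restriction using Clifford theory for the pair $D_n \trianglelefteq B_n$.

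First I would compare $\roots_k^{B_n}$ restricted to $D_n$ with $\roots_k^{D_n}$: an element $g \in D_n$ may have $k$-th roots in $B_n$ lying either inside $D_n$ or in the nontrivial coset $B_n \setminus D_n$, so
\[
\roots_k^{B_n}\big|_{D_n}(g) = \roots_k^{D_n}(g) + |\{h \in B_n \setminus D_n : h^k = g\}|.
\]
When $k$ is odd, $h^k$ lies in the same coset as $h$, so the correction term vanishes on $D_n$ and $\roots_k^{D_n}$ is simply $\roots_k^{B_n}|_{D_n}$; when $k$ is even one must understand the correction term. In either case the next step is Clifford theory: a proper $B_n$-character restricts to a proper $D_n$-character, so $\roots_k^{B_n}|_{D_n}$ is proper, but we need $\roots_k^{D_n}$ itself to be proper, and for that we must control how the "bad coset" roots interact with the decomposition. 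The natural tool is to push the computation up: express the relevant class functions on $D_n$ as restrictions and Clifford-induced pieces of higher Lie characters of $B_n$, using that each higher Lie character $\ell_{\blambda}^{B_n}$ either restricts irreducibly to $D_n$ or splits into two irreducibles, according to a combinatorial criterion on $\blambda$.

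So the key steps, in order, are: (1) set up the coset decomposition of $k$-th roots in $B_n$ over $D_n$ and isolate the correction term; (2) recall Scharf's expansion $\roots_k^{B_n} = \sum_{\blambda} c_{\blambda}\, \ell_{\blambda}^{B_n}$ with $c_{\blambda} \ge 0$ (indeed $\{0,1\}$) and, parallel to it, produce an analogous expansion for the correction class function — or, better, for a twisted version $\roots_k^{B_n, -}$ obtained by weighting roots $h$ by $\sgn(h)$ (the linear character of $B_n$ with kernel $D_n$), so that $\roots_k^{D_n}$ (or its two "halves", the class functions on $D_n$ obtained by further splitting by the outer $\ZZ_2$-action) is recovered as a sum and difference $\tfrac12(\roots_k^{B_n} \pm \roots_k^{B_n,-})\big|_{D_n}$; (3) analyze the sign-twisted Lie character $\ell_{\blambda}^{B_n} \otimes \sgn$ and its relation to $\ell_{\blambda}^{B_n}$, and use the branching rule $B_n \downarrow D_n$ to show that the resulting virtual character of $D_n$ has nonnegative multiplicities against every $\chi \in \Irr(D_n)$, treating separately the irreducibles of $D_n$ that extend to $B_n$ and the pairs $\{\chi, \chi'\}$ that fuse; (4) handle the parity of $k$, since for even $k$ the Frobenius–Schur / real-representation phenomena and the square-root count enter and the correction term is genuinely present.

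The main obstacle I expect is step (3): controlling the sign-twisted higher Lie characters of type $B$ and their $D_n$-branching with enough precision to guarantee nonnegativity of all multiplicities — in particular, identifying for which $\blambda$ the character $\ell_{\blambda}^{B_n}$ is $\sgn$-invariant (equivalently, restricts reducibly to $D_n$), and showing that in the reducible case the two halves still appear with nonnegative coefficients in $\roots_k^{D_n}$. This is where one genuinely needs the structure theory of higher Lie characters of type $B$ rather than a soft Clifford-theoretic argument, and I would expect the bulk of the paper's technical work, and probably an auxiliary combinatorial lemma on the relevant bipartition/cycle-type data, to be concentrated here.
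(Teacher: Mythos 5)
Your overall framework is the one the paper uses: pass to $B_n$, introduce the signed root enumerator $\sroots_k^{B_n}(y)=\sum_{x^k=y}\chi(x)$ where $\chi$ is the sign character with kernel $D_n$, observe that $\roots_k^{B_n}+\sroots_k^{B_n}$ vanishes off $D_n$ and equals $2\roots_k^{D_n}$ on $D_n$, and expand both class functions in higher Lie characters. So the high-level plan is right.

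However, you misidentify where the technical weight falls, in a way that would lead you down a harder road. You expect step (3) to require analyzing the sign-twisted characters $\psi^\blambda\otimes\chi$ and their branching $B_n\downarrow D_n$, and sorting out which $D_n$-irreducibles extend versus fuse. The paper never does any of that. Instead it proves the considerably cleaner fact (Theorem~\ref{t:sroots_eq_spsi}) that $\sroots_k^{B_n}$ is itself a nonnegative \emph{un}twisted sum of higher Lie characters, namely $\sroots_k^{B_n}=\sum_{C:\,C^k=\{w_0\}}\psi^C$ --- the same family $\{\psi^C\}$ as in Scharf's expansion, just indexed by $k$-th roots of $w_0$ rather than of $1$. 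Once this is in hand, $\roots_k^{B_n}+\sroots_k^{B_n}$ is manifestly a proper $B_n$-character, and restriction to a subgroup preserves properness with no Clifford-theoretic case analysis at all. The hard work is entirely inside $B_n$ (a two-variable generating-function identity comparing root enumerators to higher Lie character values), not in the $B_n\downarrow D_n$ branching.

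There is also a genuine gap at the end: even after you know $2\roots_k^{D_n}$ is a proper character, you cannot immediately conclude that $\roots_k^{D_n}$ is proper, since a priori its multiplicities are only nonnegative half-integers. The paper closes this by invoking Frobenius's classical theorem (cited at the start of Section~\ref{sec:introduction}) that all root enumerators of any finite group have integer multiplicities against irreducibles. Combined with nonnegativity of $2\roots_k^{D_n}$, this forces the multiplicities of $\roots_k^{D_n}$ to be nonnegative integers. Your write-up never addresses the division by~$2$, and without the integrality input the argument does not close.
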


Moreover, the root enumerators can be expressed as sums of higher Lie characters:     
Each root enumerator in $D_n$ is explicitly expressed as half the sum of restrictions (from $B_n$) of appropriate higher Lie characters of type $B$ (Theorem~\ref{t:Dn-higher}). 
We also obtain an explicit expression of each root enumerator in $B_n$ as a sum of higher Lie characters of type $B$ (Theorem~\ref{t:Bn-higher}). This gives a new proof of Scharf's result that root enumerators in $B_n$ are proper.

Combining Scharf's results with Theorem~\ref{t:Dn-proper}, we conclude the following.

\begin{corollary}\label{t:Weyl-proper}
For any integer $k$,  
the $k$-th root enumerator in any classical Weyl group  is a proper character.  
\end{corollary}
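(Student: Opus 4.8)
The plan is to deduce Corollary~\ref{t:Weyl-proper} by combining Theorem~\ref{t:Dn-proper} with the classification of classical Weyl groups into the three families $A$, $B$, $D$, together with the previously cited results on the first two families. First I would recall that, as stated in the introduction, a classical Weyl group is (up to isomorphism) one of $S_n$ (type $A_{n-1}$), $\ZZ_2 \wr S_n$ (type $B_n$), or the index-two subgroup of the latter (type $D_n$). So it suffices to prove properness of $\roots_k^G$ for $G$ in each of these three families, for every $k \in \ZZ$.

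For type $A_{n-1}$, properness of $\roots_k^{S_n}$ for all $n$ and $k$ is exactly Scharf's theorem~\cite{Scharf}, quoted in the introduction, which exhibits $\roots_k^{S_n}$ as a multiplicity-free sum of higher Lie characters. For type $B_n$, one invokes Scharf's wreath-product result~\cite[2.4.18]{Scharf_thesis}: if $\roots_t^G$ is a proper character for every divisor $t$ of $k$, then $\roots_k^{G \wr S_n}$ is proper; applying this with $G = \ZZ_2$ (where every $\roots_t^{\ZZ_2}$ is proper since $\ZZ_2$ is abelian with all characters real, so in fact $\roots_t^{\ZZ_2}$ is a nonnegative combination of the two linear characters) yields properness in $B_n \cong \ZZ_2 \wr S_n$. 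For type $D_n$, this is precisely the content of Theorem~\ref{t:Dn-proper}. Since the three families exhaust all classical Weyl groups, every $k$-th root enumerator in any classical Weyl group is proper.

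There is essentially no obstacle remaining at this stage: the corollary is a formal assembly of results, with Theorem~\ref{t:Dn-proper} carrying all the genuine mathematical content (the type $D$ case) and the types $A$ and $B$ already handled in the literature. The only minor point to be careful about is the reduction to the three isomorphism types: one should note that direct products of classical Weyl groups also arise as Weyl groups of non-irreducible root systems, but root enumerators are multiplicative over direct products --- $\roots_k^{G_1 \times G_2} = \roots_k^{G_1} \otimes \roots_k^{G_2}$ as class functions, and a tensor product of proper characters is a proper character --- so properness for the irreducible families immediately gives properness for arbitrary (reducible) classical Weyl groups as well. I would state this multiplicativity explicitly, then conclude.
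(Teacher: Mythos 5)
Your proposal is correct and matches the paper's intended argument: the corollary is stated as following directly by combining Scharf's results for types $A$ and $B$ with Theorem~\ref{t:Dn-proper} for type $D$, exactly as you have done. Your additional observation that root enumerators are multiplicative over direct products (handling reducible Weyl groups explicitly) is a sensible bit of extra care that the paper leaves implicit.
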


\subsection{Higher Lie characters and their role} 

The {\em Lie character} in $S_n$ is induced from a primitive linear character of a cyclic subgroup of order $n$, the centralizer $Z_{S_n}(x)$ of a long cycle (Coxeter element) $x \in S_n$.
It has many combinatorial, algebraic and topological applications; see, e.g.,~\cite{Garsia, Reutenauer}. A generalization to centralizers of other elements (in $S_n$) can be traced back to Schur~\cite{Schur} and Thrall~\cite{thrall}. These $S_n$-characters are now called {\em higher Lie characters}, and play an important role in the study of Lie algebras and 
permutation statistics; the reader is referred to the seminal paper~\cite{GR}, references therein and many subsequent works.  
Applications to root enumeration~\cite{Scharf} and Gelfand models~\cite{Inglis} motivate the current paper. 

Let us introduce the following natural definition, generalizing the $S_n$ scenario.

\begin{definition}\label{def:hLc}
{\rm (Higher Lie characters)}
Let $G$ be a finite group. 
A family of {\em higher Lie characters (HLC)} for $G$ is a family $\{\psi_G^C \,:\, C \in \Conj(G)\}$ of characters of $G$, indexed by conjugacy classes, 
satisfying the following conditions. 
\begin{enumerate}
	
	\item[(a)]
	For each $C \in \Conj(G)$ there is an element $g \in C$ such that $\psi_G^C$ is induced from a linear character of the centralizer $Z_G(g)$. 
	
	
	
	\item[(b)]
	For any integer $k$, the $k$-th root enumerator
	\[
	\roots_k^G = 
	\sum_{C \in \Conj(G) \,:\, C^k = \{1_G\}} \psi_G^C ,
	\]
	where $C^k := \{x^k \,:\, x \in C\}$.
	
\end{enumerate}
\end{definition}

Such characters were constructed for conjugacy classes of involutions $(k = 2)$ in $B_n$ by Baddeley~\cite{Baddeley} and by Scharf~\cite[\S 3.1]{Scharf_thesis}. The construction was generalized by Scharf~\cite[2.4.21~Folgerung]{Scharf_thesis} to conjugacy classes of prime order 
in wreath products. 
For general conjugacy classes in a wreath product, Scharf defined characters induced from linear characters on subgroups which are not necessarily centralizers. 

In Section~\ref{sec:gf_hLc_B} we construct higher Lie characters of type $B$. 
These characters (see Definition~\ref{def:HLC_Bn} below) are induced from linear characters of centralizers, and therefore satisfy Definition~\ref{def:hLc}(a). 
%
%
%
%
%
%
%
By the following theorem, they also satisfy Definition~\ref{def:hLc}(b).

\begin{theorem}\label{t:Bn-higher}
For any integer $k$ and positive integer $n$, the family $\{\psi_{B_n}^C \,:\, C\in \Conj(B_n)\}$ 
of $B_n$-characters from Definition~\ref{def:HLC_Bn} satisfies
\[
\roots_k^{B_n}
= \sum_{C \in \Conj(B_n) \,:\, C^k = \{1\}} \psi_{B_n}^C .
\]
\end{theorem}

See Theorem~\ref{t:roots_eq_psi} below. 
Theorem~\ref{t:Bn-higher} will be proved by comparing a generating function for root enumerators in $B_n$ (Theorem~\ref{t:gf_root_enumerator_Bn}) with a generating function for higher Lie characters in $B_n$ (Theorem~\ref{t:summation_4_Bn}). 
One deduces that, 
for all $n$ and $k$, the $k$-th root enumerator in $B_n$ is proper, providing a new proof of a result of Scharf~\cite[2.4.18 Satz]{Scharf_thesis}. 

Unfortunately, this approach is not directly applicable to root enumerators in $D_n$ since, for even $n>2$, $D_n$ has no family of higher Lie characters;  
see~\cite[Proposition 4.8.1]{Baddeley_thesis} and 
~\cite[Proposition 4 and p.\ 74, note added in proof]{Baddeley}.


To prove that root enumerators in $D_n$ are proper we therefore take a slightly different route.
We view $D_n$ as a subgroup of $B_n$ and prove the following.  

\begin{theorem}\label{t:Dn-higher}
Let $k$ be an integer and $n$ a positive integer. Then, for any $y \in B_n$:
\[
\sum_{C \in \Conj(B_n) \,:\, C^k = \{1\}} \psi_{B_n}^C(y) 
+ \sum_{C \in \Conj(B_n) \,:\, C^k = \{w_0\}} \psi_{B_n}^C(y) 
=
\begin{cases}
2 \roots_k^{D_n}(y), &\text{if } y \in D_n; \\
0, &\text{otherwise,}
\end{cases}
\]
where $w_0 := [-1,\ldots,-n]$ is the longest element in $B_n$ (a central involution).
\end{theorem}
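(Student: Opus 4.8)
The plan is to leverage Theorem~\ref{t:Bn-higher} together with a careful bookkeeping of how conjugacy classes of $B_n$ split (or not) when restricted to $D_n$, and how the central involution $w_0$ interacts with root-taking. Recall $D_n$ has index $2$ in $B_n$; write $B_n = D_n \sqcup D_n t$ for a fixed reflection $t \notin D_n$ (say $t = [-1,2,\ldots,n]$). The key elementary observation is that for $y \in B_n$ and an integer $k$,
\[
    \roots_k^{B_n}(y) = |\{h \in B_n : h^k = y\}|,
\]
and we want to isolate the contribution of those $h$ that lie in $D_n$. Since $D_n \trianglelefteq B_n$ with quotient $\ZZ_2$, the parity (number of sign changes mod $2$, equivalently the image in $B_n/D_n$) of $h^k$ equals $k$ times the parity of $h$. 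Thus for $y \in D_n$: if $k$ is even, every $h$ with $h^k = y$ automatically has $h^k \in D_n$ but $h$ itself can be of either parity; if $k$ is odd, $h^k = y \in D_n$ forces $h \in D_n$. This asymmetry is exactly why the statement needs both the sum over $C^k = \{1\}$ and the sum over $C^k = \{w_0\}$ — the element $w_0$ is the ``other'' central element, and $w_0 \in D_n$ iff $n$ is even, while in general $w_0$ accounts for the discrepancy between $\roots_k^{B_n}$ and $2\roots_k^{D_n}$.

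The main computational heart is the following identity, which I would establish directly from the definition of the root enumerator: for every $y \in B_n$,
\[
    \roots_k^{B_n}(y) + \roots_k^{B_n}(w_0 y)
    =
    \begin{cases}
        2\roots_k^{D_n}(y), & y \in D_n;\\
        0, & y \notin D_n.
    \end{cases}
\]
Here is the idea for the proof of this identity. First suppose $y \notin D_n$. I claim the map $h \mapsto hc$, where $c \in Z(B_n)$ is chosen with $c^k \in \{1, w_0\}$ suitably, or more robustly, a sign-flip argument: pick $z \in Z(B_n) = \{1, w_0\}$; actually the cleanest approach is to observe that $w_0$ is central of order $2$, so $\roots_k^{B_n}(w_0 y) = |\{h : h^k = w_0 y\}|$, and the involution on solution sets coming from multiplication by a fixed element $h_0$ with $h_0^k = w_0$ (such an $h_0$ exists precisely when... — this needs care) gives a bijection between $\{h : h^k = y\}$ and $\{h : h^k = w_0 y\}$ only in some cases. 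The genuinely robust route avoids needing a global $k$-th root of $w_0$: instead, partition $B_n$-solutions $h$ of $h^k \in \{y, w_0 y\}$ by the parity of $h$. If $y \notin D_n$, then for $h \in D_n$ we have $h^k \in D_n$, so $h^k \neq y$; whether $h^k = w_0 y$ depends on whether $w_0 y \in D_n$, i.e. on the parity of $n$ — and a matching argument on the odd-parity solutions shows the total is $0$. If $y \in D_n$, then $D_n$-solutions of $h^k = y$ contribute $\roots_k^{D_n}(y)$; the odd-parity $B_n$-solutions of $h^k \in \{y, w_0 y\}$ pair up with these via multiplication by $t$ (using $t^k = 1$ or $t^k = t$ according to parity of $k$, and $t$ normalizing appropriately), contributing another $\roots_k^{D_n}(y)$ (possibly with a shift by $w_0$ absorbed into the union over $\{y, w_0 y\}$). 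Carrying out this parity bookkeeping correctly — getting the four cases ($k$ even/odd times $n$ even/odd) to all collapse to the uniform right-hand side — is the step I expect to be the main obstacle.

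Once the displayed $\roots$-identity is in hand, the theorem follows by substituting Theorem~\ref{t:Bn-higher}. Indeed, by that theorem,
\[
    \roots_k^{B_n}(y) = \sum_{C \in \Conj(B_n) : C^k = \{1\}} \psi^C(y),
\]
and we need the analogous statement $\roots_k^{B_n}(w_0 y) = \sum_{C : C^k = \{w_0\}} \psi^C(y)$. This in turn should come from the fact that $C^k = \{w_0\}$ iff $(w_0 C)^k = \{w_0^k\} \cdot$(correction) $ = \{1\}$ when $k$ is even, resp.\ a parallel statement when $k$ odd, combined with the behaviour $\psi^{w_0 C}(w_0 y)$ versus $\psi^C(y)$ — here one uses that $w_0$ is central, so $w_0 C$ is again a conjugacy class, and that the HLC family of Definition~\ref{def:HLC_Bn} is compatible with multiplication by $w_0$ in the sense that $\psi^{w_0 C}(y) = \psi^C(w_0 y)$ up to the linear-character twist; if $\psi^C = \mathrm{Ind}_{Z_{B_n}(g)}^{B_n} \theta$ then $\psi^{w_0 C} = \mathrm{Ind}_{Z_{B_n}(w_0 g)}^{B_n} \theta'$ with $Z_{B_n}(w_0 g) = Z_{B_n}(g)$ and $\theta'(z) = \theta(z)$ or a controlled sign. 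One should double-check this twist does not disturb the evaluation; if it does, the reindexing $C \leftrightarrow w_0 C$ still matches the two sums termwise. Assembling these pieces yields exactly the asserted equality, and since the left-hand side is then $2\roots_k^{D_n}(y)$ for $y \in D_n$ and a sum of characters, we simultaneously recover Theorem~\ref{t:Dn-proper}.
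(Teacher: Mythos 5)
Your proposal breaks at the step you yourself flag as ``the main obstacle'': the displayed identity
\[
    \roots_k^{B_n}(y) + \roots_k^{B_n}(w_0 y)
    =
    \begin{cases}
        2\roots_k^{D_n}(y), & y \in D_n;\\
        0, & y \notin D_n,
    \end{cases}
\]
is simply false. Already for $n=1$, $B_1 = \ZZ_2 = \{1, w_0\}$, $D_1 = \{1\}$, take $y = w_0 \notin D_1$. For $k=1$ the left side is $\roots_1^{B_1}(w_0) + \roots_1^{B_1}(1) = 1 + 1 = 2$; for $k=2$ it is $\roots_2^{B_1}(w_0) + \roots_2^{B_1}(1) = 0 + 2 = 2$. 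Both times the right side is $0$. The same phenomenon occurs whenever $n$ is odd (so $w_0 \notin D_n$) and $y \notin D_n$: one of $y$, $w_0 y$ lies in $D_n$ and the two root counts have no reason to cancel, since they are both nonnegative. Consequently the second half of your plan, namely the claim $\roots_k^{B_n}(w_0 y) = \sum_{C : C^k = \{w_0\}} \psi^C(y)$, is also false; in the $n=1$, $k=1$, $y=w_0$ case the left side is $1$ while the right side is $\psi^{\{w_0\}}(w_0) = \chi(w_0) = -1$.

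What is actually true, and what the paper uses, replaces $\roots_k^{B_n}(w_0 y)$ by the \emph{signed} root enumerator
\[
    \sroots_k^{B_n}(y) := \sum_{\substack{x \in B_n \\ x^k = y}} \chi(x),
\]
where $\chi$ is the $B_n$-character with kernel $D_n$. The correct elementary identity is
\[
    \roots_k^{B_n}(y) + \sroots_k^{B_n}(y) = \sum_{\substack{x \in B_n \\ x^k = y}} \bigl(1 + \chi(x)\bigr)
    = 2\,\bigl|\{x \in D_n : x^k = y\}\bigr|,
\]
which equals $2\roots_k^{D_n}(y)$ when $y \in D_n$ and $0$ otherwise, because powers of elements of $D_n$ stay in $D_n$. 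This step is trivial; the genuine work hiding behind Theorem~\ref{t:Dn-higher} is the identification $\sroots_k^{B_n} = \sum_{C : C^k = \{w_0\}} \psi^C$, which is Theorem~\ref{t:sroots_eq_spsi}. That identity does \emph{not} follow from Theorem~\ref{t:Bn-higher} by a $w_0$-twist of the index set: for $k$ even the map $C \mapsto w_0 C$ preserves the condition $C^k = \{w_0\}$ rather than exchanging it with $C^k = \{1\}$, and for $k$ odd the twist introduces the factor $\chi$ via $\psi^{w_0 C} = \psi^C \cdot \chi$, giving $\sum_{C : C^k = \{w_0\}} \psi^C(y) = \chi(y)\,\roots_k^{B_n}(y)$, which is $\sroots_k^{B_n}(y)$ (since $\chi(x)=\chi(y)$ for odd $k$ and $x^k=y$) but is \emph{not} $\roots_k^{B_n}(w_0 y)$. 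The paper proves Theorem~\ref{t:sroots_eq_spsi} instead by running the generating-function machinery of Section~\ref{sec:gf_roots_B} a second time with a $\chi$-weight (Theorem~\ref{t:gf_sroot_enumerator_Bn}) and comparing it, via the same substitution trick as in the unsigned case, with the generating function for higher Lie characters (Theorem~\ref{t:summation_4_Bn}). There is no shortcut here: the signed enumerator is a genuinely new object, and the theorem you want to cite does not transfer to it by bookkeeping alone.
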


See Theorem~\ref{t:sroots_eq_spsi} and Subsection~\ref{subsec:D_proper} below. 
Theorem~\ref{t:Dn-higher} implies Theorem~\ref{t:Dn-proper}. 

\begin{remark}\label{rem:intro_D}
By Theorem~\ref{t:Bn-higher}, the first sum on the LHS of the formula in Theorem~\ref{t:Dn-higher} is equal to $\roots_k^{B_n}(y)$. In other words, 
$\roots_k^{D_n}(y)$
is half the sum of 
$\roots_k^{B_n}(y)$ 
and a correcting term. 
When both $k$ and $n$ are even, there does not seem to be a direct connection between the two root enumerators. 
However, in the following two cases there is a simple connection: 
\begin{enumerate}
\item     
For $k$ even and $n$ odd, 
the second sum is zero. It follows that, in this case, $\roots_k^{D_n}(y) = \roots_k^{B_n}(y)/2$ for any $y \in D_n$. Indeed, in this case, for $x \in B_n$: $x^k = y$ if and only if $(x w_0)^k = y$, and $|\{x,xw_0\} \cap D_n| = 1$.

\item     
For odd $k$ and any $n$, a parity argument shows that 
$\roots_k^{D_n}$
is the restriction (to $D_n$) of
$\roots_k^{B_n}$.
Indeed, one can verify that, in this case, the two sums in Theorem~\ref{t:Dn-higher} are equal. 
\end{enumerate}


\end{remark}

Furthermore, if $n$ is odd, we can define higher Lie characters $\psi_{D_n}^C$ for all conjugacy classes $C$ in $D_n$; see Definition~\ref{def:psi_D}.
If $n$ is even then there is no family of induced characters that satisfies Definition~\ref{def:hLc}, but
Definition~\ref{def:psi_D} 
can still be used for odd $k$. 
We therefore have the following improvement of Theorem~\ref{t:Dn-higher}.

\begin{theorem}\label{t:Dn-higher_odd}
Let $k$ and $n \ge 1$ be integers which are not both even. 
Then
\[
\roots_k^{D_n} 
= \sum_{C \in \Conj(D_n) \,:\, C^k = \{1\}} \psi_{D_n}^C .
\]
\end{theorem}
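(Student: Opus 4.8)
\emph{Plan.} The plan is to deduce the statement from Theorem~\ref{t:Bn-higher} by restricting everything from $B_n$ to $D_n$. Write $\ve\colon B_n\to\{\pm1\}$ for the linear character with kernel $D_n$, and recall that $w_0$ is a central involution with $\ve(w_0)=(-1)^n$. First I would record an elementary root count: for $y\in D_n$ and $h\in B_n$ one has $\ve(h^k)=\ve(h)^k$, so when $k$ is odd the equation $h^k=y$ forces $h\in D_n$, whence $\roots_k^{B_n}(y)=\roots_k^{D_n}(y)$; while when $k$ is even and $n$ is odd we have $w_0\notin D_n$, $B_n=D_n\sqcup D_nw_0$, and (using that $w_0$ is central with $w_0^k=1$) the map $h'\mapsto h'w_0$ bijects $\{h\in D_n:h^k=y\}$ with $\{h\in D_nw_0:h^k=y\}$, so that $\roots_k^{B_n}(y)=2\roots_k^{D_n}(y)$. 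Putting $m=1$ (resp.\ $m=2$) in these two cases, so that $\operatorname{Res}^{B_n}_{D_n}\roots_k^{B_n}=m\,\roots_k^{D_n}$, Theorem~\ref{t:Bn-higher} reduces the claim to
\[
\operatorname{Res}^{B_n}_{D_n}\left(\sum_{C\in\Conj(B_n):\,C^k=\{1\}}\psi^C\right)
= m\sum_{C'\in\Conj(D_n):\,(C')^k=\{1\}}\psi_{D_n}^{C'} .
\]

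\emph{Two facts.} To prove this I would establish two facts about the $B_n$-higher Lie characters. (i) If a $B_n$-class $C$ with $C\subseteq D_n$ does not split in $D_n$, then $\operatorname{Res}^{B_n}_{D_n}\psi^C=\psi_{D_n}^C$; indeed $Z_{B_n}(g)\not\subseteq D_n$ for $g\in C$, so Mackey's formula gives $\operatorname{Res}^{B_n}_{D_n}\operatorname{Ind}_{Z_{B_n}(g)}^{B_n}\lambda=\operatorname{Ind}_{Z_{D_n}(g)}^{D_n}(\operatorname{Res}\lambda)$, which is $\psi_{D_n}^C$ by Definition~\ref{def:psi_D}. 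Under the hypothesis that $n$ or $k$ is odd, every class $C$ with $C\subseteq D_n$ and $C^k=\{1\}$ is non-split (if $k$ is odd its cycle lengths are odd divisors of $k$, hence not all even; if $n$ is odd, no class with no negative cycles has all cycle lengths even), and such $C$ correspond bijectively to the $D_n$-classes $C'$ with $(C')^k=\{1\}$. (ii) For every $B_n$-class $C$ one has $\psi^{Cw_0}=\psi^C\otimes\ve$: since $w_0$ is central, $g$ and $gw_0$ have the same $B_n$-centralizer, and the primitive linear characters attached to $C$ and to $Cw_0$ in Definition~\ref{def:HLC_Bn} differ on that common centralizer exactly by $\ve$; in particular $\operatorname{Res}^{B_n}_{D_n}\psi^{Cw_0}=\operatorname{Res}^{B_n}_{D_n}\psi^C$.

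\emph{Finishing.} If $k$ is odd then $C^k=\{1\}$ already forces $C$ to have no negative cycles, hence $C\subseteq D_n$, and (i) collapses the left side of the displayed identity to $\sum_{C'}\psi_{D_n}^{C'}$, which is the case $m=1$. If $k$ is even (so $n$ is odd, $m=2$), set $\mathcal A:=\{C\in\Conj(B_n):C^k=\{1\}\}$; since $w_0^k=1$ this set is stable under $C\mapsto Cw_0$, and since $\ve(Cw_0)=(-1)^n\ve(C)=-\ve(C)$ this involution swaps $\{C\in\mathcal A:C\subseteq D_n\}$ with $\{C\in\mathcal A:C\not\subseteq D_n\}$ without fixed points. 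Using (ii) to match the restriction of each $\psi^C$ with $C\not\subseteq D_n$ to that of its $w_0$-partner inside $D_n$, and then (i), one gets
\[
\operatorname{Res}^{B_n}_{D_n}\sum_{C\in\mathcal A}\psi^C
= 2\sum_{C\in\mathcal A,\,C\subseteq D_n}\operatorname{Res}^{B_n}_{D_n}\psi^C
= 2\sum_{C'\in\Conj(D_n):\,(C')^k=\{1\}}\psi_{D_n}^{C'},
\]
which is the case $m=2$. (Equivalently one can run this from Theorem~\ref{t:Dn-higher}, after checking that $\{C\in\Conj(B_n):C^k=\{w_0\}\}$ is empty when $n$ is odd and $k$ even, and that $C\mapsto Cw_0$ carries it bijectively onto $\mathcal A$ when $k$ is odd.)

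\emph{Main obstacle.} The hard part will be the $w_0$-twist identity $\psi^{Cw_0}=\psi^C\otimes\ve$ of fact (ii): this must be proved by unwinding the explicit construction of $\psi^C$ in Definition~\ref{def:HLC_Bn} and tracking how multiplication by $w_0$ flips the signs of the odd-length cycles of a representative $g$ and how this alters the chosen primitive linear character on the (unchanged) centralizer $Z_{B_n}(g)$. Everything else is routine bookkeeping with conjugacy classes and Mackey's formula.
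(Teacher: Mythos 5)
Your proposal takes essentially the same route as the paper: the odd-$k$ case matches step for step (restriction of Theorem~\ref{t:Bn-higher}, non-splitting via Lemma~\ref{t:D_conj_and_cent}, then Lemma~\ref{t:HLC_D}), and the even-$k$, odd-$n$ case uses the identical $w_0$-pairing and twist identity $\psi^{w_0 C}=\psi^C\otimes\chi$, the only cosmetic difference being that you establish the factor-of-two count $\roots_k^{B_n}\downarrow_{D_n}=2\,\roots_k^{D_n}$ directly via the bijection $x\mapsto xw_0$, whereas the paper invokes Theorem~\ref{t:Dn-higher} for the same purpose. The twist identity you flag as the main obstacle is indeed the crux, but it is correct and the paper disposes of it in one line, by reading off $\omega^{w_0 x}=\omega^x\cdot(\chi\downarrow_{Z_{B_n}(x)})$ from Definition~\ref{def:HLC_Bn} and then applying \cite[Problem~(5.3)]{Isaacs}.
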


See Subsection~\ref{subsec:HLC_Dn} below. 







\medskip

A {\em Gelfand model} of a finite group $G$ is a multiplicity-free sum of its irreducible characters. 
By the remarks above, if all the irreducible characters of $G$ come from real representations, then a Gelfand model for $G$ yields the character $\roots_2^G$. 
The problem of constructing Gelfand 
models for 
groups was introduced by Bernstein, Gelfand and Gelfand~\cite{BGG}. 
A construction of a Gelfand model for the symmetric group $S_n$, as a multiplicity-free sum of characters induced from centralizers of involutions, was presented by Klyachko~\cite{Klyachko} and by Inglis, Richardson and Saxl~\cite{Inglis}. Such a construction is called an {\em involution model}. 
Baddeley~\cite{Baddeley} generalized this construction to Weyl groups of types $B_n$ (for all $n$) and $D_n$ (for odd $n$), and noted~\cite[Proposition 4.8.1]{Baddeley_thesis} the non-existence of such a construction for Weyl groups of type $D_n$ when $n$ is even.  
Theorem~\ref{t:Dn-higher} can be used to construct a {\em double} Gelfand model for $D_n$, namely, a character of $D_n$ in which every irreducible character appears exactly twice, see Theorem~\ref{t:Dn_Gelfand} below. 


The rest of the paper is organized as follows. Section~\ref{sec:prel} contains necessary background and notation. 
In Section~\ref{sec:gf_roots_B} we compute 
a generating function (Theorem~\ref{t:gf_root_enumerator_Bn}) for root enumerators in $B_n$. 
Section~\ref{sec:gf_hLc_B} is devoted to higher Lie characters of type $B$, including an explicit definition (Definition~\ref{def:HLC_Bn}) and a proof of a generating function (Theorem~\ref{t:summation_4_Bn}) for their values. 
In Section~\ref{sec:proof_B} we compare the two generating functions and prove Theorem~\ref{t:Bn-higher}, restated as Theorem~\ref{t:roots_eq_psi}. 
In Section~\ref{sec:D_proper} we consider the groups $D_n$ and prove Theorems~\ref{t:Dn-higher}, 
\ref{t:Dn-proper}
and~\ref{t:Dn-higher_odd}. 
The paper concludes with applications and open problems regarding Gelfand models (Subsection~\ref{subsec:Gelfand}), 
index $2$ subgroups (Subsection~\ref{subsec:index2}),  
and related 
quasisymmetric functions (Subsection~\ref{subsec:QSF}).

\section*{Acknowledgments} 
The authors thank Axel Hultman, Eric Marberg  and Richard Stanley for useful discussions and references. They also thank the anonymous referee for his comments.

\section{Preliminaries}
\label{sec:prel}

\subsection{Basic notation}

Here and in the rest of the paper, $\ZZ_n$ denotes the cyclic group of order $n$ written multiplicatively, e.g., as the group of complex $n$-th roots of unity.

A character of a finite group is a class function which is the trace of a complex representation, namely: a nonnegative integer combination of the irreducible characters of the group. To emphasize that the multiplicities are nonnegative, we usually use the adjective {\em proper} for a character, contrasting it with a generalized (or virtual) character, for which the multiplicities of irreducible characters are arbitrary integers.

Induction and restriction of class functions are always denoted by arrows, and not merely by superscripts and subscripts. Other than that, all character theory notations are standard, using~\cite{Isaacs} as reference.


\subsection{The Weyl groups of types $A$, $B$ and $D$}
\label{subsec:prel_ABD}

The 
Weyl group of type $A$ and rank $n-1$ is isomorphic to the symmetric group $S_n$, i.e., the group of bijections from the set $[n]$ onto itself;
equivalently, the group of all $n \times n$ permutation matrices.
Similarly, the 
Weyl group of type $B$ and rank $n$ may be realized as the {\em group of signed permutations} $B_n$, consisting of all bijections $w$ of the set $[\pm n] := \{\pm1,\pm 2,\ldots,\pm n\}$ onto itself such that
\[
w(-i) = -w(i)
\qquad (1 \le i \le n),
\]
with composition as the group operation;
equivalently, the group of all $n \times n$ monomial matrices (namely, matrices having a unique nonzero entry in every row and column) such that the nonzero entries are all $\pm 1$, with matrix multiplication as the group operation.
This group is also known as the {\em hyperoctahedral group} of rank $n$. 
The 
Weyl group of type $D$ and rank $n$ may be realized as an index 2 subgroup of $B_n$, consisting all elements $w \in B_n$ for which the cardinality of the set $\{1 \le i \le n \,:\, w(i)<0\}$ (equivalently, the number of negative entries in the matrix) is even. 

For a partition $\lambda$ of $n$, let $\C_\lambda$ be the conjugacy class of $S_n$ consisting of all the permutations of cycle type $\lambda$.
Let $Z_\lambda$ be the centralizer of a permutation in $\C_\lambda$ (defined up to conjugacy). 
If $\lambda$ has $a_i$ parts equal to $i$ $(\forall\, i \ge 1)$, 
then $Z_\lambda$ is isomorphic to the direct product $\times_{i \ge 1} (\ZZ_i\wr S_{a_i})$.

In this paper we use an explicit description of the conjugacy classes 
in $B_n$ (but do not need the corresponding description in $D_n$). 
A {\em bipartition} $\blambda = (\lambda^+,\lambda^-)$ of a positive integer $n$, denoted $\blambda \vdash n$, is a pair of partitions of total size $n$. 
Conjugacy classes in $B_n$ 
are in one-to-one correspondence with bipartitions of $n$. 
Each element $w \in B_n$, viewed as a permutation of the set $[\pm n]$, can be written as a product of disjoint cycles of total length $2n$. Moreover, if $c = (a_1, a_2, \ldots, a_\ell)$ is such a cycle then so is $\bar{c} = (-{a}_1, -{a}_2, \ldots, -{a}_\ell)$, and either $c$ and $\bar{c}$ are disjoint or else $c = \bar{c}$. 
In the former case, the product $c \bar{c}$ is said to be a {\em positive cycle} of
$w$, of length $\ell$; 
otherwise $\ell$ is necessarily even and $c = \bar{c}$ is said to be a {\em negative cycle} of $w$, of length $\ell/2$. 
Then the pair of partitions $(\lambda^+, \lambda^-)$ for which the parts of $\lambda^+$ (respectively, $\lambda^-$) are the lengths of the positive (respectively, negative) cycles of $w$ is a bipartition of $n$, called the \emph{signed cycle type} of $w$. Two elements
of $B_n$ are conjugate if and only if they have the same signed cycle type. We denote by $C_\blambda$ the conjugacy class of elements of $B_n$ of signed cycle type $\blambda = (\lambda^+, \lambda^-)$. 

\subsection{Higher Lie characters of type $A$}
\label{subsec:hLC_type_A}

Recall the definition of {\em higher Lie characters} in $S_n$.

\begin{definition}\label{def:higher}
Let $\lambda$ be a partition of $n$, with $a_i$ parts equal to $i$ $(\forall\, i \ge 1)$.
For each $i$, let $\omega_i$ be the linear character on $\ZZ_i\wr S_{a_i}$ indexed by the $i$-tuple of partitions $(\emptyset, (a_i), \emptyset,\ldots,\emptyset)$. 
In other words, let $\zeta_i$ be a primitive irreducible character on the cyclic group $\ZZ_i$, and extend it to the wreath product $\ZZ_i\wr S_{a_i}$ so that it is homogeneous on the base subgroup $\ZZ_i^{a_i}$ and trivial on the wreathing subgroup $S_{a_i}$. Denote this extension by $\omega_i$. Note that this character does not depend on the choice of the complement $S_{a_i}$, because all the complements are conjugate, so $\ker(\omega_i)$ contains all of them.
Now let
\[
\omega^\lambda 
:= \bigotimes_{i=1}^d \omega_i,
\]
a linear character on the centralizer $Z_\lambda$. 
Define the corresponding {\em higher Lie character} to be the induced character
\[
\psi_{S_n}^\lambda 
:= \omega^\lambda\uparrow_{Z_\lambda}^{S_n}.
\]
The higher Lie character does not depend on the choice of $\zeta_i$ because the characters of $S_n$ are rational, thus invariant under any field automorphism.
\end{definition}

For an integer $k$, denote $\lambda \vdash_k n$ if all part sizes of $\lambda$ divide $k$. 
Scharf proved the following.

\begin{theorem}\label{t:roots_eq_psi_Sn}\cite{Scharf}
For any integer $k$ and nonnegative integer $n$,
\[
\roots^{S_n}_k
= \sum_{\lambda \vdash_k n} \psi_{S_n}^\lambda.
\]
\end{theorem}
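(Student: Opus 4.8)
The statement to prove is Theorem~\ref{t:roots_eq_psi_Sn}, the identity $\roots_k^{S_n} = \sum_{\lambda \vdash_k n} \psi^\lambda$. The most natural route, and the one I would take, is via symmetric functions: encode both sides under the characteristic map $\ch$ into the ring of symmetric functions, and verify the resulting identity of formal power series after summing over $n$. First I would recall that, for the higher Lie character indexed by a partition $\lambda$ with $a_i$ parts of size $i$, the cycle-index / characteristic image factors as $\ch(\psi^\lambda) = \prod_{i \ge 1} \ch\big(\omega_i \uparrow_{\ZZ_i \wr S_{a_i}}^{S_{ia_i}}\big)$, and that the character $\omega_i$ induced from $\ZZ_i \wr S_{a_i}$ has characteristic $h_{a_i}[\ell_i]$, where $\ell_i := \ch$ of the primitive character induced from $\ZZ_i$ up to $S_i$ — i.e.\ $\ell_i = \frac{1}{i}\sum_{d \mid i} \mu(d)\, p_d^{i/d}$ is the degree-$i$ Lie symmetric function — and $h_{a_i}[-]$ denotes plethysm by the complete homogeneous symmetric function. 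Thus $\ch(\psi^\lambda) = \prod_i h_{a_i}[\ell_i]$.

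**Key steps.** Summing over all $\lambda \vdash_k n$ and then over $n$, the generating function becomes $\sum_{n \ge 0} \ch(\sum_{\lambda \vdash_k n}\psi^\lambda) = \prod_{i \,:\, i \mid k} \Big(\sum_{a \ge 0} h_a[\ell_i]\Big) = \prod_{i \mid k} H[\ell_i]$, where $H = \sum_{a\ge 0} h_a$ and $H[f] = \exp\big(\sum_{m \ge 1} \tfrac{1}{m} p_m[f]\big)$. Using $p_m[\ell_i] = \ell_i$ evaluated in power sums $p_j \mapsto p_{mj}$ and the classical identity $\sum_{i \ge 1}\ell_i = \sum_{m\ge1}\tfrac{\mu(m)}{m}(-\log(1-p_m))$ (equivalently $\prod_{i\ge1} H[\ell_i] = \prod_m (1-p_m)^{-\mu(m)\cdot(\dots)}$ collapsing to $\exp(\sum p_m/m)$), one reduces $\prod_{i \mid k} H[\ell_i]$ to an explicit product over $m \ge 1$ of powers of $(1-p_m)$. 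On the other side, the classical generating function for root enumerators in $S_n$ (which is where one does the combinatorial bookkeeping: counting $k$-th roots of a permutation of given cycle type reduces to counting, for each cycle length, the solutions in wreath products, and this is a standard exponential-formula computation) gives $\sum_n \ch(\roots_k^{S_n})$ as a matching product. Equality of the two products of $(1-p_m)$-powers is then a finite check depending only on the divisor structure of $k$, using $\sum_{d \mid m} \mu(d) = [m=1]$.

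**Main obstacle.** The genuinely delicate point is the plethystic bookkeeping that turns $\prod_{i \mid k} H[\ell_i]$ into a clean product — one must be careful that the plethysm $h_a[\ell_i]$ correctly models the induced character $\omega_i\uparrow$ (this rests on the formula $\ch(f \uparrow \text{from } \ZZ_i \wr S_a) = h_a[\ch_{\ZZ_i}(f)]$ for wreath products), and that the Möbius-type cancellations among the $\ell_i$ for varying $i \mid k$ really do reproduce, exponent by exponent in each $p_m$, the coefficients coming from the root-count side. I would therefore devote the bulk of the argument to establishing the two generating-function identities separately and cleanly, and then the final comparison is a short divisor-sum manipulation. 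An alternative, entirely representation-theoretic proof (building an explicit $S_n$-set whose permutation character is $\sum_{\lambda\vdash_k n}\psi^\lambda$ and biequivariantly identifying it with the set of pairs $(h,g)$ with $h^k=g$) is possible but harder to make rigorous; the symmetric-function route is cleaner and is the one I would write up.
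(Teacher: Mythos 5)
Your proposal tackles a theorem the paper only \emph{cites}: Theorem~\ref{t:roots_eq_psi_Sn} is quoted from Scharf~\cite{Scharf} and is not proved in the paper. So there is no ``paper's own proof'' to compare against literally; what the paper \emph{does} contain is the fully analogous type-$B$ argument (Theorems~\ref{t:gf_root_enumerator_Bn}, \ref{t:summation_4_Bn}, \ref{t:roots_eq_psi}), which recovers a new proof of Scharf's $B_n$ theorem by comparing two explicit exponential generating functions.

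Your symmetric-function route is correct and is essentially the standard proof. The key identity $\ch(\psi^\lambda) = \prod_i h_{a_i}[\ell_i]$ with $\ell_i = \tfrac{1}{i}\sum_{d\mid i}\mu(d)p_d^{i/d}$ is right, summing over $\lambda \vdash_k n$ gives $\prod_{i\mid k} H[\ell_i]$, and expanding $H[\ell_i]=\exp\bigl(\sum_m \tfrac{1}{m}p_m[\ell_i]\bigr)$ with the substitution $p_m[p_d]=p_{md}$, then reindexing by $j=md$, $e=i/d$, and using $\sum_{d\mid\gcd(j,k/e)}\mu(d)=[\gcd(j,k/e)=1]$, collapses the exponent to $\sum_j\sum_{h\mid k,\ \gcd(h,j)=1}\tfrac{p_j^{k/h}}{jk/h}$, which is exactly the $\ch$-image of the root-enumerator generating function of Lea\~nos--Moreno--Rivera-Mart\'inez~\cite{Leanos} quoted in the paper's remark after Theorem~\ref{t:gf_root_enumerator_Bn}. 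So the two sides match.

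The main difference from the paper's $B_n$ approach is stylistic rather than substantive: the paper works directly with formal indeterminates $\bs,\bt$ and class-size ratios $|C_x|/|C_y|$, organizing the Möbius cancellations by hand (Lemma~\ref{t:summation_1_Bn}, the $K_{\ve,\theta}(e)$ function), while you invoke plethysm to shortcut the same bookkeeping. The plethystic language buys brevity for type $A$; the paper's more pedestrian method is what generalizes cleanly to the signed setting, where one must track the $\ZZ_2$-classes. Your instinct to prove the two generating-function identities separately and then do a short divisor-sum comparison at the end is exactly what the paper does for $B_n$, so your plan would compile into a valid and reasonably efficient proof of the $S_n$ statement.
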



\section{Generating function for root enumerators of type $B$}
\label{sec:gf_roots_B}

In this section we compute, for any integer $k$, an explicit generating function for the values of the $k$-th root enumerator $\roots_k^{B_n}$ on arbitrary elements of $B_n$. The result is expressed in terms of the $h$-th root enumerators $\roots_h^{\ZZ_2}$, corresponding to the cyclic group $\ZZ_2$ of order 2 (written multiplicatively as $\{+1,-1\}$) for various divisors $h$ of $k$.

We state the main result (Theorem~\ref{t:gf_root_enumerator_Bn}) in Subsection~\ref{subsec:roots_main}, and prove it in Subsection~\ref{subsec:roots_proof}.

\subsection{Main result}
\label{subsec:roots_main}

Let $\bmu = (\mu^+,\mu^-)$ be a bipartition of $n$. For each integer $j \ge 1$ and sign $\theta \in \ZZ_2 = \{+1,-1\}$, let $b_{j,\theta}$ be the number of parts of size $j$ in the partition $\mu^\theta$. Thus
\[
\sum_{j,\theta} j b_{j,\theta} = n.
\]
Let $\bt = (t_{j,\theta})_{j \ge 1, \theta \in \ZZ_2}$ be a countable set of indeterminates, and denote $\bt^{\bc(\bmu)} := \prod_{j,\theta} t_{j,\theta}^{j b_{j,\theta}}$.
Consider the ring $\CC[[\bt]]$ of formal power series in these indeterminates.
The main result of this section is the following.

\begin{theorem}\label{t:gf_root_enumerator_Bn}
For any integer $k$, the generating function for the $k$-th root enumerator in $B_n$ is
\[
\sum_{n \ge 0} 
\sum_{\bmu \vdash n} 
\roots_k^{B_n}(\bmu) \,
\frac{\bt^{\bc(\bmu)}}{|Z_\bmu|}
= \exp \left(
\sum_{j \ge 1} \sum_{\theta \in \ZZ_2}
\sum_{\substack{h|k \\ \gcd(h,j) = 1}} 
\roots_h^{\ZZ_2}(\theta) \,
\frac{t_{j,\theta}^{j k/h}}{2 j k/h}         \right) .
\]
\end{theorem}

\begin{remark}
The corresponding expression for the symmetric group $S_n$ was obtained by Lea\~nos, Moreno and Rivera-Mart\'inez~\cite{Leanos}.
In our notation, with the obvious modifications, their result is:
\[
\sum_{n \ge 0} 
\sum_{\mu \vdash n} 
\roots_k^{S_n}(\mu) \,
\frac{\bt^{\bc(\mu)}}{|Z_\mu|}
= \exp \left(
\sum_{j \ge 1}
\sum_{\substack{h|k \\ \gcd(h,j) = 1}} 
\frac{t_j^{j k/h}}{j k/h}
\right) .
\]
Formally, they use $g = k/h$ instead of our $h$, and sum over all $g \ge 1$ such that $\gcd(gj,k) = g$; see Remark 1 after the proof of~\cite[Proposition 2]{Leanos}. Their $t_j$ is our $t_j^j/j$.

\end{remark}

\subsection{Proof of Theorem~\ref{t:gf_root_enumerator_Bn}}
\label{subsec:roots_proof}

\begin{observation}\label{t:c_to_C}
Let $G$ be a finite group, and $k$ an arbitrary integer. Taking powers in $G$ induces a map between conjugacy classes, called the power map, as follows. Fix $g_0 \in G$, and let $h_0 := g_0^k$.
Assume that $g_0$ belongs to a conjugacy class $C_1 \in \Conj(G)$, and $h_0$ belongs to a conjugacy class $C_k \in \Conj(G)$.
Then:
\begin{enumerate}

\item 
The $k$-th power of every element of $C_1$ belongs to $C_k$.

\item
Every element of $C_k$ is the $k$-th power of some element of $C_1$.

\item
For every element $h \in C_k$,
\[
|\{g \in C_1 \,:\, g^k = h\}| 
= \frac{|C_1|}{|C_k|} 
= \frac{|Z_G(h_0)|}{|Z_G(g_0)|} .
\]

\end{enumerate} 
\end{observation}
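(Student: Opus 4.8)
The plan is to study the $k$-th power map $\pi \colon C_1 \to C_k$, $\pi(g) := g^k$, as a map of $G$-sets for the conjugation action, and to read all three assertions off from equivariance together with the orbit--stabilizer theorem.

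First I would dispose of (1) and (2) by direct computation. For (1): if $g \in C_1$, write $g = x g_0 x^{-1}$ with $x \in G$; then $g^k = x g_0^k x^{-1} = x h_0 x^{-1} \in C_k$, so $\pi$ does land in $C_k$. For (2): given $h \in C_k$, write $h = y h_0 y^{-1}$; then $g := y g_0 y^{-1}$ lies in $C_1$ and $\pi(g) = y g_0^k y^{-1} = y h_0 y^{-1} = h$, so $\pi$ is surjective.

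For (3) the key point is that $\pi$ is equivariant: $\pi(x g x^{-1}) = x\,\pi(g)\,x^{-1}$ for all $x \in G$ and $g \in C_1$. Since $C_1$ and $C_k$ are each a single conjugacy class, $G$ acts transitively on both; hence conjugation permutes the fibers $\pi^{-1}(h)$, $h \in C_k$, and indeed does so transitively: if $h' = x h x^{-1}$ then $g \mapsto x g x^{-1}$ restricts to a bijection $\pi^{-1}(h) \to \pi^{-1}(h')$. Thus all fibers have the same cardinality, which must be $|C_1|/|C_k|$. Finally, orbit--stabilizer for conjugation gives $|C_1| = |G|/|Z_G(g_0)|$ and $|C_k| = |G|/|Z_G(h_0)|$, so $|C_1|/|C_k| = |Z_G(h_0)|/|Z_G(g_0)|$, which is (3).

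I do not anticipate any genuine obstacle; the only step that is not purely formal is the claim that a $G$-equivariant surjection of transitive $G$-sets has fibers of constant size, and that is immediate from the explicit bijection above. If one prefers to avoid $G$-set language, the case $h = h_0$ can be treated directly: the map $Z_G(h_0) \to \{g \in C_1 \,:\, g^k = h_0\}$, $x \mapsto x g_0 x^{-1}$, is well-defined (as $x h_0 x^{-1} = h_0$ forces $(x g_0 x^{-1})^k = h_0$), surjective, and has fibers equal to the left cosets of $Z_G(g_0)$, where we use $Z_G(g_0) \subseteq Z_G(g_0^k) = Z_G(h_0)$; this already yields $|\{g \in C_1 \,:\, g^k = h_0\}| = |Z_G(h_0)|/|Z_G(g_0)|$, and an arbitrary $h \in C_k$ reduces to this by conjugating.
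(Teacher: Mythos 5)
Your proof is correct. The paper states this result as an Observation with no proof supplied, so there is no in-paper argument to compare against; it is treated as a standard fact about the power map on conjugacy classes. Both routes you sketch are sound: the equivariance/transitivity argument (all fibers of a $G$-equivariant surjection between transitive $G$-sets have the same size, namely the quotient of the orbit sizes) and the more hands-on argument via the surjection $Z_G(h_0) \to \{g \in C_1 : g^k = h_0\}$, $x \mapsto x g_0 x^{-1}$, whose fibers are the left cosets of $Z_G(g_0) \le Z_G(h_0)$, are both complete and correct. This is exactly the level of detail the authors presumably regarded as routine when they labelled the statement an observation.
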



\begin{lemma}\label{t:power_of_one_cycle_Bn} 
Assume that $x \in B_\ell$ consists of a single cycle, of length $\ell$ and $\ZZ_2$-class $\ve$.
Let $k$ be an integer, let $y := x^k \in B_\ell$, and denote
\[
d := \gcd(k,\ell).
\]
Then $y$ is a product of $d$ disjoint cycles, each of length $j := \ell/d$ (relatively prime to $k/d$) and $\ZZ_2$-class $\theta := \ve^{k/d}$.
\end{lemma}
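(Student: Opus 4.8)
The plan is to reduce the statement to a purely combinatorial fact about a single cycle acting on the $2\ell$ points of $[\pm\ell]$, and then to track the $\ZZ_2$-class through the power map. Recall that $x \in B_\ell$ consisting of a single cycle means that, as a permutation of $[\pm\ell]$, $x$ either has one negative cycle of length $\ell$ (i.e.\ a single cycle $c=\bar c$ of length $2\ell$, the case $\ve=-1$) or one positive cycle of length $\ell$ (i.e.\ two disjoint cycles $c,\bar c$ of length $\ell$ each, the case $\ve=+1$). First I would treat the positive case: here $x$ restricted to the set $\{a_1,\dots,a_\ell\}$ carrying $c$ is an ordinary $\ell$-cycle in the symmetric group sense, so by the classical formula for powers of a cycle, $x^k$ restricted to that set is a product of $d=\gcd(k,\ell)$ disjoint $j$-cycles with $j=\ell/d$. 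Applying the bar operation, the same holds on $\{-a_1,\dots,-a_\ell\}$, and one checks that each of these $j$-cycles $c'$ satisfies $c'\neq\overline{c'}$ (the supports are genuinely positive-negative disjoint since they were already disjoint for $x$), so $y=x^k$ has exactly $d$ positive cycles of length $j$; since $k/d$ is coprime to $j=\ell/d$ by the standard gcd identity, and $\ve^{k/d}=1=\theta$, we are done in this case.

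Next I would treat the negative case $\ve=-1$, which is the substantive one. Here $x$ is a single $2\ell$-cycle on $[\pm\ell]$ of the special form that swaps $a_i\mapsto a_{i+1}$ for $i<\ell$ and $a_\ell\mapsto -a_1$ (indices mod the obvious pattern), equivalently $x^\ell=w_0$ on this block. Viewing $x$ abstractly as a $2\ell$-cycle, $x^k$ is a product of $\gcd(k,2\ell)$ disjoint cycles of length $2\ell/\gcd(k,2\ell)$. The key case distinction is the parity of $k/d$ where $d=\gcd(k,\ell)$: if $k/d$ is even then $\gcd(k,2\ell)=2d$, each resulting abstract cycle has length $\ell/d=j$, and I must check these $2d$ cycles pair up under the bar operation into $d$ \emph{positive} cycles of length $j$, with $\ZZ_2$-class $+1=(-1)^{k/d}$; if $k/d$ is odd then $\gcd(k,2\ell)=d$, each abstract cycle has length $2\ell/d=2j$, it is bar-invariant (since applying $x^{\text{half-period}}$ sends a point to its negative — this is where $x^\ell=w_0$ is used), hence each is a \emph{negative} cycle of length $j$, and there are $d$ of them with $\ZZ_2$-class $-1=(-1)^{k/d}$. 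In both subcases $j$ is coprime to $k/d$.

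The main obstacle I anticipate is the bookkeeping in the negative case: correctly identifying which abstract $x^k$-orbits on $[\pm\ell]$ are bar-invariant and which come in bar-pairs, and confirming the count is exactly $d$ in each subcase rather than off by a factor of $2$. The cleanest way to handle this is to pick the concrete model $a_i = i$, so that $x$ acts on $[\pm\ell]$ by $i\mapsto i+1$ for $1\le i<\ell$, $\ell\mapsto -1$, $-i\mapsto -(i+1)$ for $1\le i<\ell$, $-\ell\mapsto 1$; then $x$ is literally the $2\ell$-cycle $(1,2,\dots,\ell,-1,-2,\dots,-\ell)$, the bar operation is the ``shift by $\ell$'' automorphism of this cyclic structure, and everything reduces to the elementary question of when a subgroup of $\ZZ/2\ell\ZZ$ generated by $k$ contains $\ell$ — which happens precisely when $k/d$ is odd. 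That single observation drives both the cycle-length computation and the positive-versus-negative determination, and the coprimality of $j$ and $k/d$ is then just $\gcd(k/d,\ell/d)=1$.
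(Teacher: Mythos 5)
Your proof is correct, but it takes a genuinely different route from the paper's. You split on $\ve\in\{+1,-1\}$, realize $x$ as one or two abstract cycles on the $2\ell$ points of $[\pm\ell]$, and reduce the positive/negative determination of the cycles of $x^k$ to the question of whether $\ell$ lies in the subgroup $\gcd(k,2\ell)\ZZ/2\ell\ZZ$ --- equivalently, whether $k/d$ is odd --- which simultaneously controls the cycle lengths and the bar-pairing. The paper instead avoids any case split on $\ve$: it records the $\ell$ nonzero matrix entries $\ve_1,\dots,\ve_\ell$ of $x$ in cyclic order, observes that $x^d$ (with $d=\gcd(k,\ell)\mid\ell$) breaks into $d$ cycles of length $j=\ell/d$ whose $\ZZ_2$-classes are each the full product $\prod_u\ve_u=\ve$ (because each cycle of $x^d$ sweeps through all $\ell$ entries exactly once), and then raises to the power $k/d$ coprime to $j$, which preserves the cycle structure and raises each $\ZZ_2$-class to the $k/d$-th power. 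The paper's version buys you a uniform, one-pass argument with a single clean telescoping product; yours buys a more structural picture (explicit tracking of the bar-involution as $x^\ell=w_0$, with bar-invariant versus bar-paired orbits giving negative versus positive cycles), at the cost of a fourfold case analysis ($\ve=\pm1$, $k/d$ even or odd). One small point worth making explicit in your negative subcase with $k/d$ even: you need $2d\nmid\ell$ to rule out bar-invariant orbits, and this follows because $\gcd(j,k/d)=1$ with $k/d$ even forces $j=\ell/d$ to be odd --- a fact you use implicitly but should state.
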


\begin{proof}
Let $\ve_1, \ldots, \ve_\ell$ be a list of the nonzero matrix entries of $x$, in an order (of rows) corresponding to the order in the unique cycle of $x$ (with an arbitrary starting point).
Each nonzero entry of the matrix corresponding to $x^d$ has the form $\ve_{i} \ve_{i+1} \cdots \ve_{i+d-1}$ for some index $i$, with index summation computed modulo $\ell$. 
Since $d$ divides the cycle length $\ell$, the element $x^d \in B_\ell$ is a product of $d$ cycles, each of length $j := \ell/d$.
The $\ZZ_2$-class of each cycle is 
\[
\prod_{t=1}^{\ell/d} \prod_{s=1}^{d} \ve_{i + (t-1)d + (s-1)} = \prod_{u=1}^{\ell} \ve_u = \ve ,
\]
the $\ZZ_2$-class of $x$, where $i$ is a suitable index. 
Since $\gcd(j,k/d) = \gcd(\ell/d,k/d) = 1$, the element $x^k = (x^d)^{k/d}$ has the same cycle structure as $x^d$, and each of its cycles has $\ZZ_2$-class $\ve^{k/d}$.
\end{proof}

Consider now two countable sets of indeterminates, $\br = (r_{\ell,\ve})_{\ell \ge 1,\ve \in \ZZ_2}$ and $\bt = (t_{j,\theta})_{j \ge 1,\theta \in \ZZ_2}$.
We shall work in the ring $\CC[[\br,\bt]]$ of formal power series in these indeterminates, over the field $\CC$ of complex numbers.
Our first result gives a generating function for the number, refined by cycle type, of $k$-th roots of an element of $B_n$ which is a product of disjoint cycles of a fixed type, namely length $j \ge 1$ and $\ZZ_2$-class $\theta$.

\begin{lemma}\label{t:root_enumerator_similar_cycles_Bn} 
Fix $j \ge 1$ and $\theta \in \ZZ_2$.
For each $b \ge 0$, pick an element $y \in B_{j b}$ which is a product of $b$ disjoint cycles, each of length $j$ and $\ZZ_2$-class $\theta$. 
The generating function for the number of elements $x \in B_{j b}$ satisfying $x^k = y$ is
\begin{align*}
\sum_{b \ge 0} \frac{t_{j,\theta}^{j b}}{b! (2j)^b} 
\sum_{\blambda \vdash j b} \sum_{\substack{x \in C_\blambda \\ x^k = y}} 
\prod_{\ell,\ve} r_{\ell,\ve}^{\ell m_{\ell,\ve}} 
&= \exp \left( 
\sum_{\substack{h | k \\ \gcd(h,j) = 1}} 
\frac{t_{j,\theta}^{j k/h}}{2 j k/h} 
\sum_{\ve \in R_{h,\theta}} r_{j k/h,\ve}^{j k/h} 
\right) ,
\end{align*}
where $R_{h,\theta} := \{\ve \in \ZZ_2 \,:\, \ve^h = \theta\}$ and cycle-type $\blambda$ has $m_{\ell,\ve}$ cycles of length $\ell$ and $\ZZ_2$-class $\ve$.
\end{lemma}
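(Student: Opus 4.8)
\textbf{Proof plan for Lemma~\ref{t:root_enumerator_similar_cycles_Bn}.}

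The plan is to reduce the enumeration of $k$-th roots $x$ of a fixed element $y$ (a product of $b$ cycles of length $j$ and $\ZZ_2$-class $\theta$) to a cycle-by-cycle analysis, using the multiplicativity of the exponential generating function. First I would observe that any $x$ with $x^k = y$ permutes the set of $b$ cycles of $y$ among themselves, in a way dictated by how the cycles of $x$ break up under $k$-th powering. By Lemma~\ref{t:power_of_one_cycle_Bn}, a single cycle of $x$ of length $\ell$ and $\ZZ_2$-class $\ve$ produces, under $k$-th powering, exactly $d = \gcd(k,\ell)$ cycles each of length $\ell/d$ and $\ZZ_2$-class $\ve^{k/d}$. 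For these output cycles to be among the cycles of $y$ we need $\ell/d = j$ and $\ve^{k/d} = \theta$. Writing $h := k/d$, so that $d \mid k$, $\ell = jd = jk/h$, and $\gcd(h, j) = \gcd(k/d, \ell/d) = 1$ (as in Lemma~\ref{t:power_of_one_cycle_Bn}), we see that the relevant cycle lengths of $x$ are precisely $\ell = jk/h$ for divisors $h \mid k$ with $\gcd(h,j) = 1$, and each such cycle of $x$ ``covers'' a block of $d = k/h$ cycles of $y$, with $\ZZ_2$-class $\ve \in R_{h,\theta}$.

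Next I would set up the combinatorial accounting. Since cycles of $x$ of different lengths contribute independently, and the exponential generating function in $b$ (with the $1/(b!\,(2j)^b)$ normalization, which is exactly $1/|Z_\bmu|$ for a product of $b$ cycles of length $j$ and fixed $\ZZ_2$-class) turns a sum over structures into the exponential of a sum over connected structures, it suffices to compute the ``connected'' contribution: the generating function for the number of ways a single cycle of $x$, of length $\ell = jk/h$ and $\ZZ_2$-class $\ve$, maps under $k$-th powering onto a specified block of $d = k/h$ cycles of $y$ (each of length $j$, $\ZZ_2$-class $\theta$). Here the key point is to count, for a fixed target $y_0 \in B_{jd}$ that is a product of $d$ cycles of length $j$ and class $\theta$, the number of $x_0 \in B_{jd}$ consisting of a single cycle (length $\ell = jd$, class $\ve$) with $x_0^k = y_0$. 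By Observation~\ref{t:c_to_C} this number equals $|Z_{B_{jd}}(y_0)| / |Z_{B_{jd}}(x_0)|$; since $x_0$ is a single cycle of length $jd$ and $\ZZ_2$-class $\ve$, its centralizer in $B_{jd}$ has order $2jd$, while $|Z_{B_{jd}}(y_0)| = d!\,(2j)^d$. So each valid $(\ell, \ve)$-cycle of $x$ landing on a given block of $d$ cycles of $y$ is counted with total weight $d!\,(2j)^d/(2jd)$; attaching the indeterminate bookkeeping $t_{j,\theta}^{j d} \cdot r_{jk/h,\ve}^{jk/h}$ and dividing by the normalization $d!\,(2j)^d$ inherent in the left-hand exponential generating function, the net weight per connected piece is $t_{j,\theta}^{jk/h} \cdot r_{jk/h,\ve}^{jk/h} / (2jk/h)$. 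Summing over the admissible $h \mid k$ with $\gcd(h,j) = 1$ and over $\ve \in R_{h,\theta}$ gives exactly the exponent on the right-hand side.

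The main obstacle, and the step requiring the most care, is making the passage ``sum over all configurations $=$ exponential of sum over connected configurations'' fully rigorous in the signed-permutation setting, keeping track of the correct automorphism/centralizer normalizations at every stage. Concretely: one must check that an $x$ with $x^k = y$ decomposes uniquely as a disjoint union of ``blocks,'' where each block consists of one cycle of $x$ together with the set of $d = k/h$ cycles of $y$ it maps onto; that the number of such $y$-blocks of a given size, inside the full $y \in B_{jb}$, is handled correctly by the multinomial coefficient implicit in $1/(b!\,(2j)^b) = \prod (\text{block count})^{-1} \cdot (\text{multinomial})$; and that the cycle structure variables $r_{\ell,\ve}$ are recording the cycle type of $x$ consistently with Lemma~\ref{t:power_of_one_cycle_Bn}. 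I would verify the centralizer orders $|Z_{B_m}(\text{single cycle of length } m, \text{ class } \ve)| = 2m$ and $|Z_{B_{jd}}(y_0)| = d!(2j)^d$ directly from the signed-cycle-type description of conjugacy classes in $B_n$ (Subsection~\ref{subsec:prel_ABD}), then invoke Observation~\ref{t:c_to_C}(3) for the root count, and finally assemble the exponential via the standard exponential formula for species with the group $B_j$ (equivalently $\ZZ_2 \wr S_?$) acting. Once these normalizations are pinned down, comparing the two sides is a routine, if slightly intricate, bookkeeping exercise.
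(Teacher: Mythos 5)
Your proposal is correct in substance and uses the same two inputs the paper relies on — Lemma~\ref{t:power_of_one_cycle_Bn} for the cycle structure of powers, and Observation~\ref{t:c_to_C} for the count-via-centralizers — but it organizes the assembly differently. The paper computes $|C_x|$ and $|C_y|$ as full conjugacy classes in $B_{jb}$, forms the ratio $|C_x|/|C_y| = b!(2j)^b \prod_i (2\ell_i)^{-m_i} \prod_\ve m_{i,\ve}!^{-1}$, and then recovers the exponential by hand via the multinomial identity followed by a sum over $(m_1,\ldots,m_q)\in M_k(j,b)$. You instead exploit the observation (not stated in the paper's proof but implicit in the $b!(2j)^b=|Z_y|$ normalization) that any root $x$ of $y$ necessarily commutes with $y$ and therefore lives in $Z_y\cong G_{j,\theta}\wr S_b$; this gives a canonical decomposition of $x$ into wreath cycles, each of which is a root of the corresponding sub-product of $y$'s cycles, and the exponential formula then does the bookkeeping that the paper does explicitly. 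Your per-block weight $|Z_{B_{jd}}(y_0)|/|Z_{B_{jd}}(x_0)| = d!(2j)^d/(2jd)$, after dividing by the $d!(2j)^d$ coming from the $1/(b!(2j)^b)$ normalization, gives exactly $1/(2jd) = 1/(2jk/h)$, which matches the coefficient in the exponent. This is a genuine simplification in presentation: the connected/exponential packaging buys conceptual clarity and avoids the explicit multinomial manipulation, at the cost of needing to justify the exponential formula in the wreath setting.

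The gap you flagged yourself — making the connected-configuration decomposition rigorous — is real but closeable, and you should close it. Concretely: (i) verify that $x^k=y$ forces $x\in Z_{B_{jb}}(y)\cong G_{j,\theta}\wr S_b$; (ii) observe that if $x$ decomposes into wreath cycles $x_1,\ldots,x_m$ with disjoint supports on $[b]$ then $x^k=\prod x_i^k$ with the $x_i^k$ having the same disjoint supports, so $x_i^k=y_i$ where $y_i$ is the restriction of $y$ to the $i$-th block; (iii) check that $\gcd(k,jd)=d$ under your constraint $\gcd(h,j)=1$ with $h=k/d$, so the conjugacy class of $x_0^k$ really is that of $y_0$, validating the use of Observation~\ref{t:c_to_C}(3); and (iv) write the exponential formula with the $1/(b!(2j)^b)$ normalization absorbed into the block weights, i.e., with $\tilde w_d := w_d/(2j)^d$ and the standard $\sum_b z^b/b! \sum_\pi \prod_B \tilde w_{|B|} = \exp(\sum_d \tilde w_d z^d/d!)$. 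Once you carry these out, your argument is a complete and self-contained alternative to the paper's hands-on computation.
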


\begin{proof}
By Observation~\ref{t:c_to_C}, applied to $B_n$, the number of possible elements $x \in B_n$ in a specific conjugacy class, satisfying $x^k = y$ for a specific $y \in B_n$, is a quotient of the sizes of two conjugacy classes, that of $x$ and that of $y$. Let us compute the sizes of these conjugacy classes.

In order to compute the size of the conjugacy class of $y$, first write an arbitrary permutation of the numbers $1, \ldots, j b$. Interpret the first $j$ numbers as forming a cycle, and assign to each of the numbers an element of $\ZZ_2$ such that the $\ZZ_2$-class of the whole cycle is $\theta$. This can be done in $|\ZZ_2|^{j-1}$ ways. Treat similarly the other cycles, each having length $j$ and $\ZZ_2$-class $\theta$. Noting that any permutation of the $b$ cycles, as well as an arbitrary cyclic shift of each cycle, yields the same element of $B_{j b}$, it follows that the size of the conjugacy class of $y$ in $B_{j b}$ is
\[
|C_y|
= \frac{(j b)! \cdot (2^{j-1})^b}{b! \cdot j^b} 
= \frac{(j b)! \cdot 2^{j b}}{b! \cdot (2j)^b} .
\]

According to Lemma~\ref{t:power_of_one_cycle_Bn}, for $y \in B_{j b}$ having $b$ cycles, each of length $j$ and $\ZZ_2$-class $\theta$,
each $x \in B_{j b}$ satisfying $x^k = y$ is a product of disjoint cycles of various lengths $\ell = d j$, for various divisors $d$ of $k$ satisfying $\gcd(k,\ell) = d$, or equivalently $\gcd(k/d,j) = 1$; and the $\ZZ_2$-class $\ve$ of each cycle satisfies $\ve^{k/d} = \theta$.
Denote
\[
D_k(j) := 
\{d \,:\, d \ge 1, \,\, d|k, \,\, \gcd(k/d,j) = 1\} ,
\]
and assume that $d_1, \ldots, d_q$ are the distinct elements of $D_k(j)$.
For each $1 \le i \le q$, let $m_i$ be the number of cycles of $x$ of length $\ell_i = d_i j$. 
The total number of cycles of $y$, each of length $j$, is then $m_1 d_1 + \ldots + m_q d_q = b$.
It follows that the cycle-length distributions of elements $x \in B_{j b}$ satisfying $x^k = y$ correspond to elements of the set
\[
M_k(j,b) := 
\{(m_1, \ldots, m_q) \in \ZZ_{\ge 0}^q \,:\, m_1 d_1 + \ldots + m_q d_q = b\} .
\]
The $\ZZ_2$-class $\ve$ of a cycle of length $d_i j$ must satisfy $\ve^{k/d_i} = \theta$.
For each integer $h$, let
\[
R_{h,\theta} 
:= \{\ve \in \ZZ_2 \,:\, \ve^h = \theta\},
\]
so that $|R_{h,\theta}| = \roots^{\ZZ_2}_h(\theta)$.

Assume now that $x$ has $m_{i,\ve}$ cycles of length $\ell_i = d_i j$ and $\ZZ_2$ class $\ve$ $(1 \le i \le q,\, \ve \in R_{k/d_i,\theta})$,
so that $\sum_\ve m_{i,\ve} = m_i$ $(1 \le i \le q)$.
A computation similar to the one for $y$ shows that the size of the conjugacy class of $x$ is
\[
|C_x|
= \frac{(j b)! \cdot 
\prod_{i=1}^{q} \prod_{\ve \in R_{k/d_i,\theta}} (2^{\ell_i - 1})^{m_{i,\ve}}} 
{\prod_{i=1}^{q} \prod_{\ve \in R_{k/d_i,\theta}} m_{i,\ve}! \cdot \ell_i^{m_{i,\ve}}} 
= (j b)! \cdot 
\prod_{i=1}^{q} \left( \frac{2^{\ell_i}}{2 \ell_i} \right)^{m_i}
\prod_{\ve \in R_{k/d_i,\theta}} \frac{1}{m_{i,\ve}!} .
\]
Dividing this by the size of the conjugacy class of $y$, and using $\sum_i m_i d_i = b$ (or equivalently $\sum_i m_i \ell_i = j $b) yields
\[
\frac{|C_x|}{|C_y|}
= b! (2j)^b \cdot 
\prod_{i=1}^{q} \frac{1}{(2 \ell_i)^{m_i}} 
\prod_{\ve \in R_{k/d_i,\theta}} \frac{1}{m_{i,\ve}!} .
\]
This is the number of elements $x' \in C_x$ satisfying $(x')^k = y$.
In order to record the cycle types of $x$ and $y$, let us introduce two countable sets of indeterminates, $\br = (r_{\ell,\ve})_{\ell \ge 1,\ve \in \ZZ_2}$ and $\bt = (t_{j,\theta})_{j \ge 1,\theta \in \ZZ_2}$. Multiply the above quantity by the monomial $\prod_{i,\ve} r_{\ell_i,\ve}^{\ell_i m_{i,\ve}}$
to get
\[
b! (2j)^b \cdot 
\prod_{i=1}^{q} \frac{1}{(2 \ell_i)^{m_i}} 
\prod_{\ve \in R_{k/d_i,\theta}} \frac{r_{\ell_i,\ve}^{\ell_i m_{i,\ve}}}{m_{i,\ve}!} .
\]
Sum this, for each $1 \le i \le q$, over all the decompositions of $m_i$ into a sum of nonnegative integers $m_{i,\ve}$ $(\ve \in R_{k/d_i,\theta})$, and use the multinomial identity
\[
\left( \sum_{\ve \in R} x_\ve \right)^m
= m! \cdot 
\sum_{\substack{m_\ve \ge 0 \, (\ve \in R) \\ \sum_{\ve} m_\ve = m}} \,\,
\prod_{\ve \in R} \frac{x_\ve^{m_\ve}}{m_\ve!} 
\]
for $x_\ve = r_{\ell_i,\ve}^{\ell_i}$, $m_\ve = m_{i,\ve}$, $m = m_i$ and $R = R_{k/d_i,\theta}$,
to get
\begin{align*}
&\ b! (2j)^b \cdot 
\prod_{i=1}^{q} \frac{1}{(2 \ell_i)^{m_i}} 
\sum_{\substack{m_{i,\ve} \ge 0 \, (\ve \in R_{k/d_i,\theta}) \\ \sum_{\ve} m_{i,\ve} = m_i}}
\prod_{\ve \in R_{k/d_i,\theta}} \frac{r_{\ell_i,\ve}^{\ell_i m_{i,\ve}}}{m_{i,\ve}!} \\
&= b! (2j)^b \cdot 
\prod_{i=1}^{q} \frac{1}{(2 \ell_i)^{m_i} m_i!} 
\left( 
\sum_{\ve \in R_{k/d_i,\theta}} r_{\ell_i,\ve}^{\ell_i} 
\right)^{m_i} .
\end{align*}
Now sum this over all $(m_1,\ldots,m_q) \in M_k(j,b)$, namely $m_i \ge 0$ such that $m_1 \ell_1 + \ldots + m_q \ell_q = j b$, to get
\[
b! (2j)^b \cdot 
\sum_{(m_1,\ldots,m_q) \in M_k(j,b)} 
\prod_{i=1}^{q} \frac{1}{m_i!} 
\left( 
\frac{1}{2 \ell_i} \sum_{\ve \in R^\theta_{k/d_i}} r_{\ell_i,\ve}^{\ell_i} 
\right)^{m_i} .
\]
Dividing by $b! (2j)^b$, the above sum is the coefficient of $t_{j,\theta}^{j b}$ in
\[
\prod_{i=1}^{q} \sum_{m_i \ge 0}
\frac{1}{m_i!} 
\left( 
\frac{t_{j,\theta}^{\ell_i}}{2 \ell_i} \sum_{\ve \in R_{k/d_i,\theta}} r_{\ell_i,\ve}^{\ell_i} 
\right)^{m_i} \\
= \prod_{i=1}^{q} 
\exp \left( 
\frac{t_{j,\theta}^{\ell_i}}{2 \ell_i} \sum_{\ve \in R_{k/d_i,\theta}} r_{\ell_i,\ve}^{\ell_i} 
\right) .
\]
By the above computations, this is (up to a factor $b! (2j)^b$) the generating function for all the elements $x \in B_{j b}$ satisfying $x^k = y$ for a specific element $y$, which has $b$ cycles of length $j$ and $\ZZ_2$-class $\theta$.
The solutions are counted by conjugacy class: If $x$ has $m_{\ell,\ve}$ cycles of length $\ell$ and $\ZZ_2$-class $\ve$ $(\ell \ge 1, \ve \in \ZZ_2)$, then it contributes $1$ to the coefficient of $t_{j,\theta}^{j b} \prod_{\ell,\ve} r_{\ell,\ve}^{\ell m_{\ell,\ve}}$.
The generating function includes summation over $b \ge 0$. Therefore, recovering the factor $b! (2j)^b$,
\[
\sum_{b \ge 0} \frac{1}{b! (2j)^b} 
\sum_{\blambda \vdash j b} \sum_{\substack{x \in C_\blambda \\ x^k = y}} t_{j,\theta}^{j b}
\prod_{\ell,\ve} r_{\ell,\ve}^{\ell m_{\ell,\ve}} 
= \prod_{i=1}^{q} 
\exp \left( 
\frac{t_{j,\theta}^{\ell_i}}{2 \ell_i} \sum_{\ve \in R_{k/d_i,\theta}} r_{\ell_i,\ve}^{\ell_i} 
\right) .
\]
Recall now that $d_i$ $(1 \le i \le q)$ are all the divisors of $k$ which satisfy $\gcd(k/d_i,j) = 1$, so that $h_i := k/d_i$ are the divisors of $k$ which satisfy $\gcd(h_i,j) = 1$. Also, $\ell_i = d_i j = j k/h_i$. Thus
\[
\sum_{b \ge 0} \frac{t_{j,\theta}^{j b}}{b! (2j)^b} 
\sum_{\blambda \vdash j b} \sum_{\substack{x \in C_\blambda \\ x^k = y}} 
\prod_{\ell,\ve} r_{\ell,\ve}^{\ell m_{\ell,\ve}} 
= \prod_{\substack{h | k \\ \gcd(h,j) = 1}} 
\exp \left( 
\frac{t_{j,\theta}^{j k/h}}{2 j k/h} \sum_{\ve \in R_{h,\theta}} r_{j k/h,\ve}^{j k/h} 
\right) ,
\]
as claimed.
\end{proof} 

The general case follows.

\begin{theorem}\label{t:gf_refined_root_enumerator_Bn}
For any integer $k$, the refined generating function for elements $x \in B_n$ satisfying $x^k = y$ is
\begin{align*}
&\ \sum_{n \ge 0} \frac{1}{|B_n|}
\sum_{\bmu \vdash n} \sum_{y \in C_\bmu} 
\sum_{\blambda \vdash n} 
\sum_{\substack{x \in C_\blambda \\ x^k = y}} 
\prod_{\ell,\ve} r_{\ell,\ve}^{\ell m_{\ell,\ve}} 
\prod_{j,\theta} t_{j,\theta}^{j b_{j,\theta}} \\
&= \exp \left( 
\sum_{j,\theta}
\sum_{\substack{h|k \\ \gcd(h,j) = 1}} 
\frac{t_{j,\theta}^{j k/h}}{2 j k/h} \sum_{\ve \in R_{h,\theta}} r_{j k/h,\ve}^{j k/h} 
\right) ,
\end{align*}
where $R_{h,\theta} := \{\ve \in \ZZ_2 \,:\, \ve^h = \theta\}$, cycle-type $\blambda$ has $m_{\ell,\ve}$ cycles of length $\ell$ and $\ZZ_2$-class $\ve$, and cycle-type $\bmu$ has $b_{j,\theta}$ cycles of length $j$ and $\ZZ_2$-class $\theta$. 
\end{theorem}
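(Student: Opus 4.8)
The plan is to prove the identity by \emph{localizing the $k$-th root count to the cycle-type blocks of the target element}, thereby reducing everything to Lemma~\ref{t:root_enumerator_similar_cycles_Bn}.

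First I would simplify the left-hand side. For a fixed $y$, the quantity $\sum_{\blambda\vdash n}\sum_{x\in C_\blambda,\,x^k=y}\prod_{\ell,\ve}r_{\ell,\ve}^{\ell m_{\ell,\ve}}$ is unchanged when $y$ is replaced by a conjugate (conjugation by $g$ maps $\{x:x^k=y\}$ bijectively onto $\{x:x^k=gyg^{-1}\}$ and preserves signed cycle type), hence it depends only on the class $\bmu$ of $y$; the monomial $\prod_{j,\theta}t_{j,\theta}^{jb_{j,\theta}}$ likewise depends only on $\bmu$. So the sum over $y\in C_\bmu$ contributes a factor $|C_\bmu|$, and since $|C_\bmu|/|B_n|=1/|Z_\bmu|$ with $|Z_\bmu|=\prod_{j,\theta}(2j)^{b_{j,\theta}}b_{j,\theta}!$ (this order follows, e.g., by applying the class-size computation inside the proof of Lemma~\ref{t:root_enumerator_similar_cycles_Bn} blockwise), the left-hand side becomes
\[
\sum_{n\ge0}\sum_{\bmu\vdash n}\left(\prod_{j,\theta}\frac{t_{j,\theta}^{jb_{j,\theta}}}{(2j)^{b_{j,\theta}}b_{j,\theta}!}\right)\sum_{\blambda\vdash n}\sum_{\substack{x\in C_\blambda\\ x^k=y_\bmu}}\prod_{\ell,\ve}r_{\ell,\ve}^{\ell m_{\ell,\ve}},
\]
where $y_\bmu$ is any fixed element of $C_\bmu$.

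The key step is to factor the inner root count. Fix $y_\bmu$, and for each pair $(j,\theta)$ let $S_{j,\theta}\subseteq[\pm n]$ be the union of the supports of the cycles of $y_\bmu$ of length $j$ and $\ZZ_2$-class $\theta$. Each $S_{j,\theta}$ is closed under negation, these sets partition $[\pm n]$, and $y_\bmu=\prod_{j,\theta}y^{(j,\theta)}$, where $y^{(j,\theta)}$ is the product of the corresponding cycles, an element of the signed permutation group $B(S_{j,\theta})\cong B_{jb_{j,\theta}}$. By Lemma~\ref{t:power_of_one_cycle_Bn}, any $x$ with $x^k=y_\bmu$ has the property that each of its cycles powers to cycles of $y_\bmu$ that all share a single length and $\ZZ_2$-class, hence all lie in one block $y^{(j,\theta)}$; since the support of those cycles equals the support of the original cycle of $x$, every cycle of $x$ is supported in a single $S_{j,\theta}$. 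Grouping the cycles of $x$ accordingly gives a decomposition $x=\prod_{j,\theta}x^{(j,\theta)}$ into commuting signed permutations on disjoint negation-closed supports, with $(x^{(j,\theta)})^k=y^{(j,\theta)}$ in $B(S_{j,\theta})$. This yields a bijection $\{x\in B_n:x^k=y_\bmu\}\ \leftrightarrow\ \prod_{j,\theta}\{x'\in B(S_{j,\theta}):(x')^k=y^{(j,\theta)}\}$ under which the cycle-type monomial $\prod_{\ell,\ve}r_{\ell,\ve}^{\ell m_{\ell,\ve}}$ multiplies across the factors, so
\[
\sum_{\blambda\vdash n}\sum_{\substack{x\in C_\blambda\\ x^k=y_\bmu}}\prod_{\ell,\ve}r_{\ell,\ve}^{\ell m_{\ell,\ve}}
=\prod_{j,\theta}\left(\sum_{\blambda'\vdash jb_{j,\theta}}\sum_{\substack{x'\in C_{\blambda'}\\ (x')^k=y^{(j,\theta)}}}\prod_{\ell,\ve}r_{\ell,\ve}^{\ell m_{\ell,\ve}}\right),
\]
and each factor on the right again depends only on $b_{j,\theta}$ by the conjugacy argument above.

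Finally I would assemble and invoke Lemma~\ref{t:root_enumerator_similar_cycles_Bn}. Substituting the last identity into the simplified left-hand side, and noting that prescribing a bipartition $\bmu$ (of an unspecified $n\ge0$) is the same as prescribing a finitely supported family $(b_{j,\theta})_{j\ge1,\,\theta\in\ZZ_2}$ of nonnegative integers, the double sum $\sum_{n\ge0}\sum_{\bmu\vdash n}\prod_{j,\theta}(\cdots)$ turns into a product over $(j,\theta)$ of one-variable sums (the product being well-defined in $\CC[[\br,\bt]]$, since each $(j,\theta)$-factor has constant term $1$ and any fixed monomial involves only finitely many of the $t_{j,\theta}$), namely
\[
\prod_{j,\theta}\left(\sum_{b\ge0}\frac{t_{j,\theta}^{jb}}{(2j)^{b}b!}\sum_{\blambda'\vdash jb}\sum_{\substack{x'\in C_{\blambda'}\\ (x')^k=y_b}}\prod_{\ell,\ve}r_{\ell,\ve}^{\ell m_{\ell,\ve}}\right),
\]
where $y_b\in B_{jb}$ is a product of $b$ cycles of type $(j,\theta)$. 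Each factor is precisely the left-hand side of Lemma~\ref{t:root_enumerator_similar_cycles_Bn}, hence equals $\exp\bigl(\sum_{\substack{h|k\\ \gcd(h,j)=1}}\frac{t_{j,\theta}^{jk/h}}{2jk/h}\sum_{\ve\in R_{h,\theta}}r_{jk/h,\ve}^{jk/h}\bigr)$; multiplying over $(j,\theta)$ and combining the exponents gives exactly the right-hand side of the theorem. The step I expect to be the main obstacle is the factorization in the third paragraph: one must verify carefully that for a \emph{fixed} target $y_\bmu$ the $k$-th roots split as an honest direct product over the cycle-type blocks of $y_\bmu$ — that no cycle of a root straddles two blocks (which is exactly what Lemma~\ref{t:power_of_one_cycle_Bn} rules out) and that the resulting bijection respects both the cycle-type weighting and the blockwise bipartition bookkeeping.
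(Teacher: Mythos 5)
Your proposal is correct and follows essentially the same route as the paper's proof: both proofs decompose $y$ into blocks $y^{(j,\theta)}$ by cycle length and $\ZZ_2$-class, observe that the $k$-th roots factor as a direct product over these blocks, invoke Lemma~\ref{t:root_enumerator_similar_cycles_Bn} for each block, and then convert using $|C_\bmu| = |B_n| \big/ \prod_{j,\theta}(2j)^{b_{j,\theta}}b_{j,\theta}!$. The one place you add value over the paper's terse argument is that you explicitly justify the block factorization — that no cycle of a root $x$ straddles two blocks $S_{j,\theta}$ — by applying Lemma~\ref{t:power_of_one_cycle_Bn} to each cycle of $x$; the paper simply writes $y = \prod_{j,\theta} y_{j,\theta}$ and multiplies the generating functions, leaving this point implicit.
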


\begin{proof}
Let $y = \prod_{j,\theta} y_{j,\theta}$, where $y_{j,\theta}$ has $b_{j,\theta}$ cycles, each of length $j$ and $\ZZ_2$-class $\theta$. Use Lemma~\ref{t:root_enumerator_similar_cycles_Bn}, with $b = b_{j,\theta}$, and multiply over all $j$ and $\theta$ to get
\begin{align*}
&\ \prod_{j,\theta} 
\sum_{b_{j,\theta} \ge 0} \frac{t_{j,\theta}^{j b_{j,\theta}}}{b_{j,\theta}! (2j)^{b_{j,\theta}}} 
\sum_{\blambda \vdash j b_{j,\theta}} \sum_{\substack{x \in C_\blambda \\ x^k = y_{j,\theta}}} 
\prod_{\ell,\ve} r_{\ell,\ve}^{\ell m_{\ell,\ve}} \\
&= \prod_{j,\theta}
\prod_{\substack{h | k \\ \gcd(h,j) = 1}} 
\exp \left( 
\frac{t_{j,\theta}^{j k/h}}{2 j k/h} \sum_{\ve \in R_{h,\theta}} r_{j k/h,\ve}^{j k/h} 
\right) ,
\end{align*}
where $R_{h,\theta} := \{\ve \in \ZZ_2 \,:\, \ve^h = \theta\}$. 
Using
\[
|C_y| = \frac{|B_n|}{\prod_{j,\theta} b_{j,\theta}! (2j)^{b_{j,\theta}}}
\]
gives the equivalent form
\begin{align*}
&\ \sum_{n \ge 0} \frac{1}{|B_n|}
\sum_{\bmu \vdash n} \sum_{y \in C_\bmu} 
\sum_{\blambda \vdash n} 
\sum_{\substack{x \in C_\blambda \\ x^k = y}} 
\prod_{\ell,\ve} r_{\ell,\ve}^{\ell m_{\ell,\ve}} 
\prod_{j,\theta} t_{j,\theta}^{j b_{j,\theta}} \\
&= \prod_{j,\theta}
\prod_{\substack{h | k \\ \gcd(h,j) = 1}} 
\exp \left( 
\frac{t_{j,\theta}^{j k/h}}{2 j k/h} \sum_{\ve \in R_{h,\theta}} r_{j k/h,\ve}^{j k/h} 
\right) ,
\end{align*}
as claimed.
\end{proof}

\begin{proof}[Proof of Theorem~\ref{t:gf_root_enumerator_Bn}.] 
Set $r_{\ell,\ve} = 1$ in Theorem~\ref{t:gf_refined_root_enumerator_Bn}, for all $\ell$ and $\ve$, and note that 
\[
\sum_{\blambda \vdash n} 
\sum_{\substack{x \in C_\blambda \\ x^k = y}} 1
= \roots_k^{B_n}(y) .
\]
Recall that $|R_{h,\theta}| = \roots_h^{\ZZ_2}(\theta)$ and $|Z_\bmu| = |B_n|/|C_\bmu|$.
Denoting $\bt^{\bc(\bmu)} :=         \prod_{j,\theta} t_{j,\theta}^{j b_{j,\theta}}$, Theorem~\ref{t:gf_refined_root_enumerator_Bn} reduces to Theorem~\ref{t:gf_root_enumerator_Bn}, presenting the (unrefined) generating function for the $k$-th root enumerator in $B_n$.
\end{proof}

\section{Generating function for higher Lie characters of type $B$}
\label{sec:gf_hLc_B}

In this section we introduce higher Lie characters of type $B$. 
The main result here is an explicit generating function for the values of these characters (Theorem~\ref{t:summation_4_Bn}). 
We define higher Lie characters for $B_n$ in Subsection~\ref{subsec:def_hLc_B}, state the main theorem in Subsection~\ref{subsec:main_hLc_B}, and prove it in the following subsections.

A notational remark: In this section we do not consider $k$-th roots at all, and therefore the letter $k$ will have other uses.

\subsection{Definitions}
\label{subsec:def_hLc_B}

In this subsection we define a higher Lie character for any element (in fact, for any conjugacy class) in $B_n \cong \ZZ_2 \wr S_n$. 




\begin{lemma}\label{t:centralizer_in_Bn}
{\rm (Centralizers in $B_n$)}
\begin{enumerate}

\item[(a)]
Let $x \in B_n$ be an element of cycle type $\blambda = (\lambda^+,\lambda^-)$ where, for each $i \ge 1$ and $\ve \in \ZZ_2$, the partition $\lambda^\ve$ has $a_{i,\ve}$ parts of size $i$. 
Write 
\[
x = \prod_{i \ge 1} x_{i,+} x_{i,-}\,, 
\]
where $x_{i,\ve}$ has $a_{i,\ve}$ cycles of length $i$ and $\ZZ_2$-class $\ve$ $(i \ge 1,\, \ve \in \ZZ_2)$; of course, only finitely many factors here are nontrivial.
Then the centralizer 
$Z_x = Z_{B_n}(x)$ satisfies
\[
Z_{B_n}(x) = \bigtimes_{i \ge 1} \left( Z_{B_{ia_{i,+}}}(x_{i,+}) \times Z_{B_{ia_{i,-}}}(x_{i,-}) \right) ,
\]
and is therefore isomorphic to the direct product 
\[
\bigtimes_{i \ge 1} \left( G_{i,+} \wr S_{a_{i,+}} \times G_{i,-} \wr S_{a_{i,-}} \right) ,
\]
where $G_{i,\ve}$ is the centralizer in $B_i$ of a cycle of length $i$ and $\ZZ_2$-class $\ve$.
By convention, $G \wr S_0$ is the trivial group while $G \wr S_1 \cong G$. 

\item[(b)]
Let $x_{i,+} \in B_i$ consist of a single cycle, of length $i$ and class $+1 \in \ZZ_2$. Then the centralizer $ Z_{x_{i,+}} = Z_{B_i}(x_{i,+}) = G_{i,+}$ is isomorphic to the group $\ZZ_2 \times \ZZ_i$, where the generator of $\ZZ_i$ is $x_{i,+}$ and the generator of $\ZZ_2$ is the central {\rm (}longest{\rm )} element $w_0 = [-1,\ldots,-i] \in B_i$.
\item[(c)]
Let $x_{i,-} \in B_i$ consist of a single cycle, of length $i$ and class $-1 \in \ZZ_2$. Then the centralizer $Z_{x_{i,-}} = Z_{B_i}(x_{i,-}) = G_{i,-}$ is isomorphic to the cyclic group $\ZZ_{2i}$, with generator $x_{i,-}$; note that $x_{i,-}^i = w_0$.

\end{enumerate}
\end{lemma}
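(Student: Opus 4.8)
The goal is to describe the centralizer structure in $B_n$, and the proof is essentially an exercise in tracking cycle structure of signed permutations. I would organize it as three parts, corresponding to (a), (b), (c), and I would do (b) and (c) first since (a) is assembled from them.

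\smallskip

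\textit{Part (b) and (c): centralizers of a single cycle.} Suppose $x \in B_i$ is a single cycle (positive or negative) of length $i$ in the signed-cycle-type sense. I would first note that any element commuting with $x$ must permute the cycles of $x$ among themselves, preserving lengths and $\ZZ_2$-classes; since there is only one cycle, any centralizing element must normalize it. So the centralizer embeds into the normalizer of the cyclic group generated by the relevant ordinary cycles on $[\pm i]$. For the negative cycle case (c), $x$ acts on $[\pm i]$ as a single $2i$-cycle, so $x$ generates a cyclic group of order $2i$; an element of $B_i$ commuting with a $2i$-cycle on a $2i$-element set (subject to the sign-symmetry $w(-j)=-w(j)$) must be a power of that cycle, giving $Z_{B_i}(x) \cong \ZZ_{2i}$ generated by $x$, and $x^i$ is the unique element sending each $j$ to $-j$, i.e.\ $w_0$. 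For the positive cycle case (b), $x$ acts on $[\pm i]$ as two disjoint $i$-cycles, $c$ and $\bar c$, interchanged by negation; the centralizer in the symmetric group on $[\pm i]$ of this pair, intersected with $B_i$, is generated by $x$ itself (order $i$, fixing the two cycles setwise) together with $w_0$ (order $2$, swapping $c$ and $\bar c$), and these commute, giving $Z_{B_i}(x) \cong \ZZ_2 \times \ZZ_i$. The main thing to check carefully here is that nothing outside $\langle x, w_0\rangle$ commutes with $x$: this is a direct counting/normalizer argument, and the constraint $w(-j)=-w(j)$ is what pins it down.

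\smallskip

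\textit{Part (a): the general centralizer.} Write $x = \prod_{i,\ve} x_{i,\ve}$ as the disjoint product of its cycle-blocks, where $x_{i,\ve}$ is supported on a subset $U_{i,\ve} \subseteq [\pm n]$ of size $2ia_{i,\ve}$ consisting of the cycles of length $i$ and $\ZZ_2$-class $\ve$. The key structural fact is that any $w \in Z_{B_n}(x)$ must preserve each $U_{i,\ve}$ setwise: conjugation by $w$ permutes the cycles of $x$ while preserving length and $\ZZ_2$-class (the latter because $w$ is sign-symmetric, so it sends negative cycles to negative cycles and positive to positive), hence $w$ restricted to $U_{i,\ve}$ lies in $B_{ia_{i,\ve}}$ and commutes with $x_{i,\ve}$. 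This gives the direct-product decomposition $Z_{B_n}(x) = \bigtimes_{i,\ve} Z_{B_{ia_{i,\ve}}}(x_{i,\ve})$. Then I would identify $Z_{B_{ia_{i,\ve}}}(x_{i,\ve})$ with $G_{i,\ve} \wr S_{a_{i,\ve}}$: an element of this centralizer is determined by a permutation $\sigma \in S_{a_{i,\ve}}$ of the $a_{i,\ve}$ constituent cycles together with, for each cycle, an element of the centralizer $G_{i,\ve}$ of a single such cycle (this uses parts (b)/(c) and the standard description of centralizers in wreath products, as already recalled for type $A$ in Section~\ref{subsec:prel_ABD}). The conventions $G \wr S_0 = \{1\}$ and $G \wr S_1 = G$ handle the cases $a_{i,\ve} \in \{0,1\}$.

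\smallskip

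\textit{Expected obstacle.} There is no deep obstacle; the whole statement is bookkeeping about cycle structure under the sign-symmetry constraint. The one spot that needs genuine care is verifying in part (b)/(c) that the single-cycle centralizer is exactly $\langle x, w_0\rangle$ (resp.\ $\langle x\rangle$) and not larger — i.e.\ ruling out extra symmetries — and then, in part (a), being careful that a centralizing element cannot mix a positive cycle-block with a negative one even when the lengths agree; both follow cleanly from $w(-j) = -w(j)$ together with the fact that conjugation preserves the partition of $[\pm n]$ into cycles. Once those are pinned down, the wreath-product identification is the standard one and the direct-product decomposition follows formally.
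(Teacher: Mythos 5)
Your proof plan is correct and gives the standard argument for this lemma. Note that the paper itself states Lemma~\ref{t:centralizer_in_Bn} without proof, treating it as a known structural fact about centralizers in $B_n \cong \ZZ_2 \wr S_n$; so there is no in-paper argument to compare against. The route you lay out is the expected one: for parts (b) and (c) one computes the centralizer of a single positive (resp.\ negative) cycle by intersecting the centralizer of the corresponding permutation of $[\pm i]$ in the full symmetric group on $2i$ letters with $B_i$ (a positive $i$-cycle is two disjoint $i$-cycles interchanged by negation, giving order $2i^2$ before intersection and $2i = |\ZZ_2 \times \ZZ_i|$ after; a negative $i$-cycle is a single $2i$-cycle, whose full-symmetric-group centralizer $\ZZ_{2i}$ already lies in $B_i$, with $x^i = w_0$); and for part (a) one uses that conjugation by a sign-symmetric permutation preserves both the length and the $\ZZ_2$-class of each cycle, so that a centralizing element preserves each block $U_{i,\ve}$ setwise, yielding the direct-product decomposition, and then identifies each block centralizer with $G_{i,\ve} \wr S_{a_{i,\ve}}$ exactly as in the type-$A$ case. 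Your caveats about ruling out extra symmetries in the single-cycle computation and about not mixing positive and negative blocks of equal length are the right things to flag, and both are resolved exactly by the constraint $w(-j) = -w(j)$ together with the fact that conjugation preserves the cycle partition of $[\pm n]$.
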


\begin{definition}\label{def:HLC_Bn}
{\rm (Higher Lie characters in $B_n$)}
Let $x$ be an element of cycle type $\blambda = (\lambda^+,\lambda^-)$ in $B_n$, as in Lemma~\ref{t:centralizer_in_Bn}(a).        
\begin{enumerate}

\item[(a)]
For each $i \ge 1$ and $\ve \in \ZZ_2$, let $\omega_{i,\ve}$ be the linear character on $G_{i,\ve} \wr S_{a_{i,\ve}}$ defined as follows: 
If $\ve = +1$ then, by Lemma~\ref{t:centralizer_in_Bn}(b), $G_{i,+} \cong \ZZ_2 \times \ZZ_i$.
Let $\omega_{i,+}$ be trivial on $\ZZ_2$, 
equal to a primitive irreducible character on the cyclic group $\ZZ_i$,
and trivial on the wreathing group $S_{a_{i,+}}$.
If $\ve = -1$ then, by Lemma~\ref{t:centralizer_in_Bn}(c), $G_{i.-} \cong \ZZ_{2i}$.
Let $\omega_{i,-}$ be equal to a primitive irreducible character on the cyclic group $\ZZ_{2i}$,
and trivial on the wreathing group $S_{a_{i,-}}$.
Let
\[
\omega^x 
:= 
\bigotimes_{i \ge 1} \left( \omega_{i,+} \otimes \omega_{i,-} \right),
\]
a linear character on $Z_x$. 

\item[(b)]
Define the corresponding {\em higher Lie character} to be the induced character
\[
\psi_{B_n}^x 
:= \omega^x 
\uparrow_{Z_x}^{B_n}. 
\]

\item[(c)]
It is easy to see that $\psi_{B_n}^x$ depends only on the conjugacy class $C$ (equivalently, the cycle type $\blambda$) of $x$, and can therefore be denoted $\psi_{B_n}^C$ or $\psi_{B_n}^\blambda$.

\end{enumerate}
\end{definition}

\subsection{Main result}
\label{subsec:main_hLc_B}

We use notation similar to that of Subsection~\ref{subsec:roots_main}. 
Specifically, let $\blambda = (\lambda^+,\lambda^-)$ and $\bmu = (\mu^+,\mu^-)$ be two bipartitions of $n$. 
For each integer $i \ge 1$ and sign $\ve \in \ZZ_2 = \{+1,-1\}$, let $a_{i,\ve}$ be the number of parts of size $i$ in the partition $\lambda^\ve$.
Similarly, for each integer $j \ge 1$ and sign $\theta \in \ZZ_2 = \{+1,-1\}$, let $b_{j,\theta}$ be the number of parts of size $j$ in the partition $\mu^\theta$. 
Thus
\[
\sum_{i,\ve} i a_{i,\ve} 
= \sum_{j,\theta} j b_{j,\theta} 
= n.
\]
Let $\bs = (s_{i,\ve})_{i \ge 1, \ve \in \ZZ_2}$ and $\bt = (t_{j,\theta})_{j \ge 1, \theta \in \ZZ_2}$ be two countable sets of indeterminates.
Denote $\bs^{\bc(\blambda)} := \prod_{i,\ve} s_{i,\ve}^{i a_{i,\ve}}$ and $\bt^{\bc(\bmu)} := \prod_{j,\theta} t_{j,\theta}^{j b_{j,\theta}}$.
Consider the ring $\CC[[\bs,\bt]]$ of formal power series in these indeterminates.
The main result of this section is the following.

\begin{theorem}\label{t:summation_4_Bn}
\begin{align*}
\sum_{n \ge 0}  
\sum_{\blambda \vdash n} \sum_{\bmu \vdash n} 
\psi_{B_n}^\blambda(\bmu) \,
\frac{\bs^{\bc(\blambda)} \bt^{\bc(\bmu)}}{|Z_\bmu|} 
&= \exp \left( \sum_{i,\ve} \sum_{j,\theta} \sum_{e | \gcd(i,j)}
K_{\ve,\theta}(e) \,
\frac{(s_{i,\ve} t_{j,\theta})^{ij/e}}{2ij/e} \right) .
\end{align*}
Here, for $\ve, \theta \in \ZZ_2 = \{+1,-1\}$ and $e \ge 1$,
\[
K_{\ve,\theta}(e)
:= \ve \theta \cdot \mu(2e)
+ \frac{(1+\ve)(1+\theta)}{2} \cdot \mu(e),
\] 
where $\mu(\cdot)$ is the usual (arithmetic) M\"obius function.
\end{theorem}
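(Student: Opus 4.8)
\textbf{Proof plan for Theorem~\ref{t:summation_4_Bn}.}

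The plan is to compute the left-hand side by expanding the induced-character value $\psi^\blambda(\bmu)$ via the standard formula for induction from a centralizer, and then exploiting the product structures of both $Z_\bmu$ (from Lemma~\ref{t:centralizer_in_Bn}(a), so that everything factors over the pairs $(j,\theta)$) and of $\omega^\blambda$ (which is a tensor product over the pairs $(i,\ve)$). Concretely, for $x$ of type $\blambda$ one has
\[
    \psi^\blambda(\bmu) = \frac{1}{|Z_\bmu|}\sum_{\substack{g\in B_n \\ g^{-1}yg\in Z_x}} \omega^x(g^{-1}yg)
\]
for any fixed $y$ of type $\bmu$; summing this over $y\in C_\bmu$ and dividing by $|Z_\bmu|$ converts the quantity $\psi^\blambda(\bmu)/|Z_\bmu|$ into a sum over \emph{pairs} $(x,y)$ with $x$ of type $\blambda$, $y$ of type $\bmu$, and $y$ conjugate into $Z_x$, weighted by $\omega^x$ of the relevant image and normalized by $|B_n|$. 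The upshot is that $\psi^\blambda(\bmu)/|Z_\bmu|$ counts, with the linear-character weight, the ways $x$ and $y$ ``interlock'': $y$ lies in $Z_x$, i.e., $y$ is built from the cycles of $x$ by grouping them and applying powers of the cyclic/abelian centralizer generators described in Lemma~\ref{t:centralizer_in_Bn}(b),(c).

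The key computational step is therefore a \emph{single-species} lemma analogous to Lemma~\ref{t:root_enumerator_similar_cycles_Bn}: fix a length $i$ and $\ZZ_2$-class $\ve$ on the $x$-side; the factor $\omega_{i,\ve}\uparrow$ of $\psi^x$ restricted to elements $y$ supported on these cycles should have a generating function that, after taking the exponential/plethystic bookkeeping over the number $a_{i,\ve}$ of such cycles, produces $\exp$ of a sum indexed by the cycle lengths $j$ that occur in $y$ on this block. An element $y$ centralizing $x_{i,\ve}$ (which has $a_{i,\ve}$ cycles of length $i$ and class $\ve$) permutes these cycles and multiplies within each orbit by a power of the generator of $G_{i,\ve}$; by the same mechanism as Lemma~\ref{t:power_of_one_cycle_Bn}, an orbit of size $m$ under this action yields a $y$-cycle of length $j=im/e'$ for an appropriate divisor and with a predictable $\ZZ_2$-class $\theta$, and the value of the primitive character $\omega_{i,\ve}$ on the resulting power is a root of unity. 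Summing these roots of unity over all choices producing a fixed $(j,\theta)$ is exactly where the Möbius function enters: one is computing a Ramanujan-type sum $\sum_{\zeta}\zeta^{\,r}$ over primitive roots of unity (of order $i$ for $\ve=+$ with a trivial $\ZZ_2$-factor, of order $2i$ for $\ve=-$), which evaluates to $\mu$ of the relevant cotransform. Disentangling the $\ZZ_2$-grading in the $\ve=+$ case (where $G_{i,+}\cong\ZZ_2\times\ZZ_i$ and $w_0$ contributes the sign) versus the $\ve=-$ case (where $G_{i,-}\cong\ZZ_{2i}$ and $x_{i,-}^i=w_0$) is what yields the two-term formula for $K_{\ve,\theta}(e)$, with the $\mu(2e)$ term carrying the sign $\ve\theta$ and the $\mu(e)$ term surviving only when both $\ve=\theta=+1$.

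After establishing the single-species identity, the product over all $(i,\ve)$ (from the tensor decomposition of $\omega^x$) and the product over all $(j,\theta)$ (from the wreath decomposition of $Z_\bmu$, giving the $1/(b_{j,\theta}!\,(2j)^{b_{j,\theta}})$ factors exactly as in the proof of Theorem~\ref{t:gf_refined_root_enumerator_Bn}) combine the individual exponentials into a single exponential of a double sum over $(i,\ve)$ and $(j,\theta)$, with the divisor $e\mid\gcd(i,j)$ recording how the block interlocking is constrained. The main obstacle I expect is the careful evaluation of the character sum: correctly identifying, for each pair $(i,\ve)$ and each orbit structure, which power of the centralizer generator appears, tracking its image under the chosen primitive character, and then performing the Möbius inversion uniformly in $\ve$ and $\theta$ so that the sign bookkeeping collapses cleanly to $K_{\ve,\theta}(e)=\ve\theta\,\mu(2e)+\tfrac{(1+\ve)(1+\theta)}{2}\mu(e)$. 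Everything else — the conjugacy-class size computations, the multinomial/plethystic rearrangements, and the passage from refined to unrefined generating functions — is routine and parallels Section~\ref{sec:gf_roots_B}.
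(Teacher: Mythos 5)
Your proposal follows essentially the same approach as the paper's proof: it expands $\psi^\blambda(\bmu)/|Z_\bmu|$ via the induced-character formula into a normalized sum of $\omega^x(z)$ over pairs, factors the computation over cycle blocks $(i,\ve)$ and $(j,\theta)$ using the wreath-product structure of $Z_x$ from Lemma~\ref{t:centralizer_in_Bn}, evaluates the single-block character sums as Ramanujan/M\"obius sums (matching Lemmas~\ref{t:Mobius}, \ref{t:Mobius2}, \ref{t:summation_1_Bn}) to obtain $K_{\ve,\theta}(e)$, and recombines exponentially (as in Lemmas~\ref{t:summation_2_Bn} and~\ref{t:summation_3_Bn}). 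The sketch is correct in substance and structure, and the remaining work you label as routine is indeed carried out in the paper in exactly the parallel fashion you describe.
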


We prove this result in the following subsections. 

\subsection{Values of induced characters}

Let us start our computations 
by writing a general formula (Lemma~\ref{t:HLC_values}) for the values of each higher Lie character, as an induced character.

For an element $x\in B_n$ denote the conjugacy class and the centralizer of $x$ by $C_x$ and $Z_x$, respectively. 
Recall the definitions of $\omega^x$ and $\psi_{B_n}^x$ from Subsection~\ref{subsec:def_hLc_B}. 

\begin{lemma}\label{t:HLC_values}
If $x \in B_n$ 
then
\[
\psi_{B_n}^x(g) 
= \frac{|\C_x|}{|\C_g|} 
\sum_{z \in \C_g \cap Z_x} 
\omega^x(z) 
\qquad (\forall g \in B_n).
\]
\end{lemma}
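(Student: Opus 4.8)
The plan is to derive the formula from the standard character-theoretic expression for an induced character. Recall that for a subgroup $H \le G$, a character $\omega$ of $H$, and $g \in G$, the induced character is given by
\[
    (\omega\uparrow_H^G)(g) = \frac{1}{|H|} \sum_{\substack{a \in G \\ a^{-1}ga \in H}} \omega(a^{-1}ga).
\]
Applying this with $G = B_n$, $H = Z_x$, and $\omega = \omega^x$, I would group the summation over $a \in B_n$ according to the value $z := a^{-1}ga \in Z_x$. The element $z$ must be conjugate to $g$, so it ranges over $\C_g \cap Z_x$; and for each such $z$, the number of $a \in B_n$ with $a^{-1}ga = z$ is exactly $|Z_{B_n}(g)| = |B_n|/|\C_g|$, since the solution set is a coset of $Z_{B_n}(g)$. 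Since $\omega^x$ is a class function on $Z_x$ in particular it only depends on $z$, so
\[
    \psi^x(g) = \frac{1}{|Z_x|} \cdot \frac{|B_n|}{|\C_g|} \sum_{z \in \C_g \cap Z_x} \omega^x(z).
\]
Finally I would substitute $|Z_x| = |B_n|/|\C_x|$, which gives $|B_n|/(|Z_x|\,|\C_g|) = |\C_x|/|\C_g|$, yielding the claimed identity.

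There is essentially no obstacle here; the only point requiring a moment's care is the counting step: verifying that, for fixed $g$ and a fixed target $z$ conjugate to $g$, the set $\{a \in B_n : a^{-1}ga = z\}$ is either empty or a right coset of $Z_{B_n}(g)$ and hence has cardinality $|Z_{B_n}(g)|$ — this is the orbit–stabilizer computation for the conjugation action, and it is exactly what makes the grouping work. I would also remark that $\omega^x(z)$ is well defined for $z \in Z_x$ (it is literally a character of $Z_x$), so no extension or choice issue arises. This lemma is purely formal and sets up the subsequent generating-function computation, where the sum $\sum_{z \in \C_g \cap Z_x}\omega^x(z)$ will be evaluated explicitly using the direct-product structure of $Z_x$ from Lemma~\ref{t:centralizer_in_Bn} and the description of $\omega^x$ from Definition~\ref{def:HLC_Bn}.
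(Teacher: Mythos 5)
Your proof is correct and takes essentially the same route as the paper: both start from the explicit formula for an induced character, regroup the sum over $a \in B_n$ by the value $z = a^{-1}ga \in \C_g \cap Z_x$ using the orbit--stabilizer count $|Z_{B_n}(g)| = |B_n|/|\C_g|$, and then substitute $|Z_x| = |B_n|/|\C_x|$ to obtain the stated formula.
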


\begin{proof}
Let $G$ be a group, and $\chi$ a character of a subgroup $H$ of $G$. 
Define a function $\chi^0: G \to \CC$ by
\[
\chi^0(g) :=
\begin{cases}
\chi(g), &\text{if } g \in H; \\
0, &\text{if } g \in G \setminus H.
\end{cases}
\]
By~\cite[(5.1)]{Isaacs}, an explicit formula for the induced character $\chi\uparrow_H^G$ is
\[
\chi\uparrow_H^G(g)
= \sum_{G = \cup_a aH} \chi^0(a^{-1} g a)
= \frac{1}{|H|} \sum_{a\in G} \chi^0(a^{-1} g a)
\qquad (\forall g \in G) .
\]
The mapping $f : G \to \C_g$ defined by
$f(a) := a^{-1} g a$ $(\forall a \in G)$ is surjective, and satisfies:
$f(a_1) = f(a_2)$ if and only if $a_1 a_2^{-1} \in Z_g$.
Hence
\[
\chi\uparrow_H^G(g)
= \frac{|Z_g|}{|H|} \sum_{z \in \C_g}\chi^0(z)
= \frac{|Z_g|}{|H|} \sum_{z \in \C_g \cap H} \chi(z).
\]
Consider now an element $x \in B_n$, 
and apply the above formula
with $G = B_n$, $H = Z_x = Z_{B_n}(x)$, and 
$\chi = \omega^x$, the linear character on the centralizer $Z_x$ described in Definition~\ref{def:HLC_Bn}(a). Then
\[
\psi_{B_n}^x(g) 
= \omega^x \uparrow_{Z_x}^{B_n} (g)
= \frac{|Z_g|}{|Z_x|} \sum_{z \in \C_g \cap Z_x} \omega^x(z) 
\qquad (\forall g \in B_n).
\]
Recalling that $|Z_x| = |B_n|/|\C_x|$ and  $|Z_g| = |B_n|/|\C_g|$ completes the proof.  
\end{proof}

\subsection{The structure of a single cycle}

We want to study the $B_n$-structure of elements $z \in \C_g \cap Z_x$.
Our main initial result is Corollary~\ref{t:cycles2_Bn}, describing the $B_n$-structure of a single cycle of $z$.

Assume that $x \in B_n$ has cycle type $\blambda = (\lambda^+,\lambda^-)$.
Following Lemma~\ref{t:centralizer_in_Bn}(a),
decompose
\[
x = \prod_{i \ge 1} x_{i,+} x_{i,-} \,,    
\]
where each $x_{i.\ve} \in B_n$ is a product of $a_{i,\ve}$ cycles of length $i$ and $\ZZ_2$-class $\ve$ $(i \ge 1, \ve \in \ZZ_2)$, and only finitely many of the factors are nontrivial. 
Then, by Lemma~\ref{t:centralizer_in_Bn},
\[
Z_{B_n}(x)  
= \bigtimes_{i \ge 1} \left( Z_{B_{i a_{i,+}}}(x_{i,+}) \times Z_{B_{i a_{i,-}}}(x_{i,-}) \right)
\cong \bigtimes_{i \ge 1} \left( G_{i,+} \wr S_{a_{i,+}} \times G_{i,+} \wr S_{a_{i,-}} \right),
\]
where $G_{i,+} \cong \ZZ_2 \times \ZZ_i$
and $G_{i,-} \cong \ZZ_{2i}$.

Assume that $g \in B_n$ has cycle type $\bmu = (\mu^+,\mu^-)$, with $b_{j,\theta}$ cycles of length $j$ and $\ZZ_2$-class $\theta$ $(j \ge 1, \theta \in \ZZ_2)$.
Let $z \in \C_g \cap Z_x$, and decompose it as
\[
z = \prod_{i \ge 1} z_{i,+} z_{i,-} \,,
\]
where $z_{i,\ve} \in Z_{B_{i a_{i,\ve}}}(x_{i,\ve}) \cong G_{i,\ve} \wr S_{a_{i,\ve}}$ $(i \ge 1, \ve \in \ZZ_2)$.
Using a finer decomposition, assume that $z_{i,\ve}$, as an element of $B_{i a_{i,\ve}}$, has $m_{i,\ve,j,\theta}$ cycles of length $j$ and $\ZZ_2$-class $\theta$ $(j \ge 1, \theta \in \ZZ_2)$.
Of course, 
\[
\sum_{j,\theta} j m_{i,\ve,j,\theta} = i a_{i,\ve}
\quad \text{ and } \quad 
\sum_{i,\ve} m_{i,\ve,j,\theta} = b_{j,\theta}.
\]
Assume now that, as an element of $G_{i,\ve} \wr S_{a_{i,\ve}}$, $z_{i,\ve}$ has a cycle $c$ of length $\ell$ and $G_{i,\ve}$-class $\gamma$.
What is the structure of $c$ as an element of $B_{i a_{i,\ve}}$?

Let $\zeta_i \in \CC$ be a primitive complex $i$-th root of $1$, generating the cyclic group $\ZZ_i$.

\begin{lemma}\label{t:cycles_from_GwrS_to_B}
Fix $i \ge 1$ and $\ve \in \ZZ_2$, and let $x_{i,\ve} \in B_{i a_{i,\ve}}$ be a product of $a_{i,\ve}$ cycles, each of length $i$ and $\ZZ_2$-class $\ve$.
Let $c \in 
Z_{B_{i a_{i,\ve}}}(x_{i,\ve}) 
\cong G_{i,\ve} \wr S_{a_{i,\ve}}$ be a cycle of length $\ell$ {\rm (}in $S_{a_{i,\ve}}${\rm )} and class $\gamma \in G_{i,\ve}$.
Then there are two possible cases:
\begin{enumerate}

\item[(a)]
If $\ve = +1$ and $\gamma = (\delta, \zeta_i^k) \in \ZZ_2 \times \ZZ_i \cong G_{i,+}$, let $d := \gcd(k,i)$.
Then, as an element of $B_{ia_{i,+}}$, $c$ is a product of $d$ disjoint cycles, each of length $j = \ell i/d$ and $\ZZ_2$-class $\theta = \delta^{i/d}$.

\item[(b)]
If $\ve = -1$ and $\gamma = \zeta_{2i}^k \in \ZZ_{2i} \cong G_{i,-}$, let $d := \gcd(k,i)$.
Then, as an element of $B_{ia_{i,-}}$, $c$ is a product of $d$ disjoint cycles, each of length $j = \ell i/d$ and $\ZZ_2$-class $\theta = (-1)^{k/d}$.

\end{enumerate}
\end{lemma}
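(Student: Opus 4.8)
The plan is to decouple the two layers of structure in $c$: the ``outer'' $S_{a_{i,\ve}}$-cycle of length $\ell$ and the ``inner'' twist $\gamma \in G_{i,\ve}$, which lives inside a single $B_i$. First I would make the embedding $G_{i,\ve}\wr S_{a_{i,\ve}} \cong Z_{B_{i a_{i,\ve}}}(x_{i,\ve}) \le B_{i a_{i,\ve}}$ from Lemma~\ref{t:centralizer_in_Bn} concrete: the $a_{i,\ve}$ cycles of $x_{i,\ve}$ cut the $2 i a_{i,\ve}$ signed letters into $a_{i,\ve}$ blocks, each block carrying a copy of the action of $G_{i,\ve}\le B_i$ on $\Omega := \{\pm 1,\ldots,\pm i\}$, and --- crucially --- the sign-flip $a\mapsto -a$ acts \emph{inside} each block, since it preserves the support of each cycle $c\bar c$ of $x_{i,\ve}$. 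Now $c$ involves exactly $\ell$ of these blocks, its $S_{a_{i,\ve}}$-part being an $\ell$-cycle on them with twist $\gamma$; since $G_{i,\ve}$ is abelian the twist is a genuine element of $G_{i,\ve}$, and $c^{\ell}$ acts as $\gamma$ (diagonally) on each of the $\ell$ blocks. A standard orbit count then shows that each $\gamma$-orbit on $\Omega$ of length $r$ unfolds into exactly one $c$-orbit on $[\ell]\times\Omega$ of length $\ell r$, and these account for all of $[\ell]\times\Omega$. Because the sign-flip is block-internal, a $B_i$-cycle of $\gamma$ of length $\rho$ and $\ZZ_2$-class $\eta$ --- i.e.\ a $\pm$-paired pair of $\rho$-cycles of $\gamma$ on $\Omega$ when $\eta=+1$, a self-paired $2\rho$-cycle when $\eta=-1$ --- produces exactly one $B$-cycle of $c$ of length $\ell\rho$ and the \emph{same} $\ZZ_2$-class $\eta$.

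It then remains to compute the $B_i$-cycle type of $\gamma$ itself, and here the two cases diverge. In case (b), $\gamma=\zeta_{2i}^k$ corresponds under $\ZZ_{2i}\cong G_{i,-}=\langle x_{i,-}\rangle$ to $x_{i,-}^{k}$, the $k$-th power of a single negative $i$-cycle, so Lemma~\ref{t:power_of_one_cycle_Bn} (applied in $B_i$, with ``$\ell$'' $=i$ and ``$\ve$'' $=-1$) gives that $\gamma$ is a product of $d=\gcd(k,i)$ cycles, each of length $i/d$ and $\ZZ_2$-class $(-1)^{k/d}$; the first paragraph then yields $d$ cycles of $c$ of length $\ell i/d$ and class $(-1)^{k/d}$, as claimed. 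In case (a), $\gamma=(\delta,\zeta_i^{k})$ equals $x_{i,+}^{k}$ if $\delta=+1$ and the central element $w_0\in B_i$ times $x_{i,+}^{k}$ if $\delta=-1$. Lemma~\ref{t:power_of_one_cycle_Bn} gives that $x_{i,+}^{k}$ is a product of $d=\gcd(k,i)$ positive cycles of length $i/d$. When $\delta=-1$, left-multiplying by the central $w_0$ acts on each of these cycles separately, and a short orbit computation shows that $w_0$ times a single positive $\rho$-cycle is a single cycle of length $\rho$ and $\ZZ_2$-class $(-1)^{\rho}$; hence each of the $d$ positive $(i/d)$-cycles becomes a cycle of length $i/d$ and class $(-1)^{i/d}$. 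In both sub-cases $\gamma$ is a product of $d$ cycles of length $i/d$ and class $\delta^{i/d}$, and the first paragraph turns this into $d$ cycles of $c$ of length $\ell i/d$ and class $\delta^{i/d}$, completing case (a).

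I expect the main obstacle to be the first paragraph --- formalizing the wreath-to-$B_n$ dictionary and, in particular, checking that the $\ZZ_2$-class of a cycle is preserved under the $\ell$-fold unfolding. The care needed is purely bookkeeping of $\pm$-pairs: one must verify that two $\gamma$-orbits $O,\bar O$ that are $\pm$-paired on $\Omega$ lift to two $\pm$-paired $c$-orbits of length $\ell\rho$ (hence one positive $B$-cycle of length $\ell\rho$), whereas a self-paired $\gamma$-orbit of length $2\rho$ lifts to a self-paired $c$-orbit of length $2\ell\rho$ (hence one negative $B$-cycle of length $\ell\rho$); the twisted sub-case $\delta=-1$ of (a), where $x_{i,+}^{k}$ is multiplied by the central $w_0$, is the only other spot that needs a small separate argument.
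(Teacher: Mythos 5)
Your proposal is correct, and it takes a genuinely different route from the paper's own proof. The paper realizes $Z_{B_{ia_{i,\ve}}}(x_{i,\ve})$ in a concrete $\ell \times i$ array $(p_{s,t})$, writes down the action of $c$ on the array, computes $c^\ell$ explicitly, and then identifies the smallest power $c^{\ell i/d}$ that acts as a global scalar to read off the cycle lengths and $\ZZ_2$-classes in a single uniform calculation for both cases. You instead factor the argument into two independent pieces: a general ``unfolding'' lemma (a cycle of $S_{a_{i,\ve}}$-length $\ell$ and twist $\gamma \in G_{i,\ve}$ turns each $B_i$-cycle of $\gamma$ of length $\rho$ and class $\eta$ into exactly one $B_{i\ell}$-cycle of length $\ell\rho$ with the same class $\eta$), plus a computation of the $B_i$-cycle type of the twist $\gamma$ itself. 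For the latter you reuse Lemma~\ref{t:power_of_one_cycle_Bn} directly (applied inside $B_i$ with cycle length $i$), together with a short extra orbit argument for the central $w_0$-factor in case (a), $\delta = -1$; I checked that argument and it is right --- $w_0$ restricted to the support of a positive $\rho$-cycle yields a single cycle of length $\rho$ and class $(-1)^\rho$, and $w_0$ factors as a product of such block-local central elements across the $d$ cycles of $x_{i,+}^k$. The two proofs share the same underlying picture (your ``blocks'' are the paper's rows, and your $c^\ell = \gamma$ is the paper's computation of $c^\ell$), but yours is more modular and recycles an existing lemma, while the paper's is more self-contained and treats both $\ve = \pm 1$ by one formula. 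Both are complete; the only place requiring extra care in your version is the $\pm$-pair bookkeeping in the unfolding step, which you correctly flag and which does go through.
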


\begin{proof}
Following Lemma~\ref{t:centralizer_in_Bn}, 
and after an appropriate conjugation in $B_{i a_{i,\ve}}$,
we can assume that we have the following scenario:
\begin{itemize}

\item 
There is an $\ell \times i$ rectangular array $(p_{s,t})$ of distinct integers in $[i a_{i,\ve}]$ (say, $p_{s,t} = (s-1)i + t$ for all $1 \le s \le \ell$ and $1 \le t \le i$).
Each of the $\ell$ rows corresponds to a cycle (of length $i$ and $\ZZ_2$-class $\ve$) of $x_{i,\ve}$. As an element of $B_{i a_{i,\ve}}$,
\begin{align*}
	x_{i,\ve} : \quad
	p_{s,t} &\mapsto p_{s,t+1}
	\qquad (1 \le s \le \ell,\, 1 \le t \le i),
\end{align*}
where $p_{s,i+1}$ is interpreted as $\ve \cdot p_{s,1}$. On the other elements of $[i a_{i,\ve}]$, $x_{i,\ve}$ acts as the identity.

\item 
As an element of $B_{i a_{i,\ve}}$, $c \in Z_{x_{i,\ve}}$ permutes the $\ell$ rows cyclically, and specifically acts as follows:
\begin{enumerate}
	
	\item[(a)]
	If $\ve = +1$ and 
	$\gamma = (\delta, \zeta_i^k) \in \ZZ_2 \times \ZZ_i$ $(0 \le k < i)$, then 
	\begin{align*}
		c : \quad
		p_{s,t} &\mapsto p_{s+1,t} 
		\qquad (1 \le s \le \ell-1,\, 1 \le t \le i), \\
		p_{\ell,t} &\mapsto \delta \cdot p_{1,t+k}
		\qquad (1 \le t \le i),
	\end{align*}
	with $p_{1,t+k}$ interpreted as $p_{1,t+k-i}$ if $t+k > i$.
	
	\item[(b)]
	If $\ve = -1$ and
	$\gamma = \zeta_{2i}^k \in \ZZ_{2i}$ $(0 \le k < 2i)$, then
	\begin{align*}
		c : \quad
		p_{s,t} &\mapsto p_{s+1,t} 
		\qquad (1 \le s \le \ell-1,\, 1 \le t \le i), \\
		p_{\ell,t} &\mapsto p_{1,t+k} 
		\qquad (1 \le t \le i),
	\end{align*}
	with $p_{1,t+k}$ interpreted as $-p_{1,t+k-i}$ if $t+k > i$ (thus as $p_{1,t+k-2i}$ if $t+k > 2i$).
	
\end{enumerate}

\end{itemize}
Since $c$ permutes the $\ell$ rows cyclically, each cycle of $c$ (as an element of $B_{i a_{i,\ve}}$) has length divisible by $\ell$. Concerning $c^\ell$:
\begin{enumerate}

\item[(a)]
If $\ve = +1$ and 
$\gamma = (\delta, \zeta_i^k) \in \ZZ_2 \times \ZZ_i$ $(0 \le k < i)$, then 
\begin{align*}
	c^\ell : \quad
	p_{s,t} &\mapsto \delta \cdot p_{s,t+k} 
	\qquad (\forall s,t), 
\end{align*}
with $p_{s,t+k}$ interpreted as $p_{s,t+k-i}$ if $t+k > i$.

\item[(b)]
If $\ve = -1$ and
$\gamma = \zeta_{2i}^k \in \ZZ_{2i}$ $(0 \le k < 2i)$, then
\[
c^\ell : \quad
p_{s,t} \mapsto p_{s,t+k} 
\qquad (\forall s,t),
\]
with $p_{s,t+k}$ interpreted as $-p_{s,t+k-i}$ if $t+k > i$, etc.

\end{enumerate}
Defining $d := \gcd(k,i)$, we have $\gcd(k/d,i/d) = 1$. The smallest multiple of $k$ which is divisible by $i$ is thus $(i/d) \cdot k = i \cdot (k/d)$. It follows that all the cycles of $c$ have length $\ell i/d$, and we have the following:
\begin{enumerate}

\item[(a)]
If $\ve = +1$ and 
$\gamma = (\delta, \zeta_i^k) \in \ZZ_2 \times \ZZ_i$ $(0 \le k < i)$, then 
\begin{align*}
	c^{\ell i/d} : \quad
	p_{s,t} &\mapsto \delta^{i/d} \cdot p_{s,t + (i/d) \cdot k} 
	= \delta^{i/d} \cdot p_{s,t} 
	\qquad (\forall s,t).
\end{align*}

\item[(b)]
If $\ve = -1$ and
$\gamma = \zeta_{2i}^k \in \ZZ_{2i}$ $(0 \le k < 2i)$, then
\begin{align*}
	c^{\ell i/d} : \quad
	p_{s,t} &\mapsto p_{s,t + (i/d) \cdot k}
	= p_{s,t + (k/d) \cdot i}
	= (-1)^{k/d} \cdot p_{s,t} 
	\qquad (\forall s,t).
\end{align*}

\end{enumerate}
This determines the $\ZZ_2$-class of each cycle of $c$. 
\end{proof}

Denoting $e := i/d$, so that $j = \ell e$, we can restate Lemma~\ref{t:cycles_from_GwrS_to_B} as follows.

\begin{corollary}\label{t:cycles2_Bn}
Fix $i,j \ge 1$ and $\ve,\theta \in \ZZ_2$, and let $x_{i,\ve} \in B_{i a_{i,\ve}}$ be a product of $a_{i,\ve}$ cycles, each of length $i$ and $\ZZ_2$-class $\ve$.
To each cycle $c$ of $z_{i,\ve} \in Z_{x_{i,\ve}} \cong G_{i,\ve} \wr S_{a_{i,\ve}}$ which contributes to $m_{i,\ve,j,\theta}$ there corresponds a common divisor $e$ of $i$ and $j$ such that, denoting $d := i/e$ and $\ell := j/e$, the following holds:
The cycle $c$ has length $\ell$ {\rm (}in $S_{a_{i,\ve}}${\rm )} and class $\gamma \in G_{i,\ve}$, and it corresponds {\rm (}as an element of $B_{i \ell} = B_{j d}${\rm )} to a product of $d$ disjoint cycles, each of length $j$ and class $\theta \in \ZZ_2$, where either
\begin{enumerate}

\item[(a)]
$\ve = +1$,
$\gamma = (\delta, \zeta_i^k) \in \ZZ_2 \times \ZZ_i \cong G_{i,+}$, 
$\gcd(k,i) = d$ and $\theta = \delta^{i/d}$, 
or

\item[(b)]
$\ve = -1$,
$\gamma = \zeta_{2i}^k \in \ZZ_{2i} \cong G_{i,-}$, 
$\gcd(k,i) = d$ and $\theta = (-1)^{k/d}$.

\end{enumerate}
\end{corollary}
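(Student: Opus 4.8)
The statement is, almost word for word, a reformulation of Lemma~\ref{t:cycles_from_GwrS_to_B} under the substitution $e := i/d$ (equivalently $d = i/e$), so the proof will be very short; essentially it is just a change of the indexing variable. Here is the plan.

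First I would invoke Lemma~\ref{t:cycles_from_GwrS_to_B} directly. Viewed inside $G_{i,\ve} \wr S_{a_{i,\ve}}$, the cycle $c$ has some length $\ell$ (in $S_{a_{i,\ve}}$) and some class $\gamma \in G_{i,\ve}$; writing $\gamma = (\delta, \zeta_i^k)$ when $\ve = +1$ and $\gamma = \zeta_{2i}^k$ when $\ve = -1$, and putting $d := \gcd(k,i)$, the lemma tells us that, as an element of $B_{i\ell}$, the cycle $c$ is a product of $d$ disjoint cycles, each of length $\ell i/d$, all of the same $\ZZ_2$-class, namely $\delta^{i/d}$ in case (a) and $(-1)^{k/d}$ in case (b).

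Next I would unwind the hypothesis that $c$ contributes to $m_{i,\ve,j,\theta}$. By the very definition of $m_{i,\ve,j,\theta}$ this says precisely that each of those $d$ cycles has length $j$ and $\ZZ_2$-class $\theta$; that is, $\ell i/d = j$ and the common $\ZZ_2$-class equals $\theta$. Setting $e := i/d$, the length equation becomes $\ell e = j$, so $e \mid j$; and since $d = \gcd(k,i)$ divides $i$, we get $e = i/d \mid i$ as well. Hence $e$ is a common divisor of $i$ and $j$, with $d = i/e$ and $\ell = j/e$, exactly as the statement names them; moreover $i\ell = jd$, so the two descriptions $B_{i\ell}$ and $B_{jd}$ of the ambient group agree. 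Finally, the $\ZZ_2$-class identities $\theta = \delta^{i/d}$ (for $\ve = +1$) and $\theta = (-1)^{k/d}$ (for $\ve = -1$) are transcribed unchanged from the lemma, yielding cases (a) and (b) of the corollary.

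I do not expect any genuine obstacle here: the entire content is the divisibility bookkeeping that $e := i/d$ divides both $i$ and $j$, which is immediate from $d \mid i$ and $\ell e = j$. The only point warranting a moment's care is keeping the index of the ambient hyperoctahedral group consistent between the ``$G_{i,\ve} \wr S_{a_{i,\ve}}$ picture'' and the ``$B_{jd}$ picture'', and that is settled by the identity $i\ell = jd$ noted above.
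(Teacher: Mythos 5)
Your proposal is correct and coincides with the paper's own (one-line) justification: the paper introduces the corollary with exactly the words ``Denoting $e := i/d$, so that $j = \ell e$, we can restate Lemma~\ref{t:cycles_from_GwrS_to_B} as follows,'' which is precisely the reparametrization you carry out. Your additional bookkeeping (that $e$ divides both $i$ and $j$, and that $i\ell = jd$ makes the ambient group well-defined) is accurate and fills in the small details the paper leaves implicit.
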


\subsection{Summation on a single cycle}

We now want to 
compute the sum in Lemma~\ref{t:HLC_values} on a certain small subset of $\C_g \cap Z_x$, when $g$ and $x$ have special $B_n$ cycle types, with all cycles of the same length and $\ZZ_2$-class.
The main result here is Lemma~\ref{t:summation_1_Bn}, addressing summation over elements with a single underlying cycle.

As a computational tool, recall the following well-known fact regarding the classical M\"obius function $\mu : \NN \to \{-1,0,1\}$, defined by
$\mu(1) := 1$, $\mu(n) := (-1)^k$ if $n$ is a product of $k \ge 1$ distinct primes, and $\mu(n) := 0$ otherwise (namely, if $n$ is not square-free).

\begin{lemma}\label{t:Mobius}
For any positive integer $n$,
\[
\sum_{\substack{0 \le k < n \\ \gcd(k,n) = 1}} \zeta_n^k = \mu(n) \, .
\]    
\end{lemma}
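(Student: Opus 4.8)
The plan is to recognize the left-hand side as the sum of all \emph{primitive} complex $n$-th roots of unity: since $\zeta_n$ is a fixed primitive $n$-th root of $1$, the power $\zeta_n^k$ is a primitive $n$-th root of unity precisely when $\gcd(k,n)=1$, and as $k$ ranges over $\{0 \le k < n : \gcd(k,n)=1\}$ the elements $\zeta_n^k$ run over these primitive roots without repetition. Write $f(n)$ for this sum. The first step is to set up the divisor-sum identity that $f$ satisfies and to compare it with the one defining $\mu$.

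Concretely, every complex $n$-th root of unity has order $d$ for a unique divisor $d \mid n$, and for each such $d$ the roots of order exactly $d$ are the primitive $d$-th roots of unity. Partitioning the set of all $n$-th roots accordingly gives
\[
    \sum_{d \mid n} f(d) = \sum_{k=0}^{n-1} \zeta_n^k = \begin{cases} 1, & n = 1,\\ 0, & n > 1, \end{cases}
\]
where the last equality is the usual geometric-series evaluation (for $n>1$ one has $\zeta_n \neq 1$, so $\sum_{k=0}^{n-1}\zeta_n^k = (\zeta_n^n - 1)/(\zeta_n - 1) = 0$). The arithmetic M\"obius function satisfies the identical recursion $\sum_{d\mid n}\mu(d) = [\,n=1\,]$. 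Hence $f$ and $\mu$ are two arithmetic functions with the same summatory function, so $f = \mu$ by M\"obius inversion; equivalently, one proves $f(n) = \mu(n)$ by strong induction on $n$, using $f(n) = [\,n=1\,] - \sum_{d \mid n,\, d < n} f(d)$ and the same recursion for $\mu$.

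There is no real obstacle here; this is the classical evaluation of the Ramanujan sum $c_n(1) = \mu(n)$, and the only point requiring a line of justification is the bijective partition of the $n$-th roots of unity into primitive $d$-th roots over $d \mid n$. (Alternatively, one could note that $f$ is multiplicative via the Chinese Remainder Theorem on $(\ZZ/n\ZZ)^\times$ and compute $f(p^a) = 0$ for $a \ge 2$ and $f(p) = -1$ directly from two geometric-series cancellations, matching $\mu(p^a)$; but the divisor-sum argument is shorter and is the route I would take.)
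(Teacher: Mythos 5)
The paper states Lemma~\ref{t:Mobius} without proof, explicitly introducing it as a ``well-known fact'' about the M\"obius function. Your argument --- recognizing the left side as the sum of primitive $n$-th roots of unity (the Ramanujan sum $c_n(1)$), observing that this function and $\mu$ satisfy the same divisor-sum recursion $\sum_{d\mid n} f(d) = [\,n=1\,]$, and concluding $f = \mu$ by M\"obius inversion --- is a correct and standard proof of the cited fact, so it accurately supplies what the paper omits.
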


Here are a few similar formulas that will also prove useful.

\begin{lemma}\label{t:Mobius2}
Fix a positive integer $n$. Then:
\begin{enumerate}
\item[(a)]
\[
\sum_{\substack{0 \le k < 2n \\ \gcd(k,n) = 1 \\ k \text{ odd}}} \zeta_{2n}^k 
= \mu(2n)
= \begin{cases}
	-\mu(n), &\text{if } n \text{ is odd;} \\
	0, &\text{if } n \text{ is even.}  
\end{cases}
\]
\item[(b)]
\[
\sum_{\substack{0 \le k < 2n \\ \gcd(k,n) = 1 \\ k \text{ even}}} \zeta_{2n}^k 
= -\mu(2n)
= \begin{cases}
	\mu(n), &\text{if } n \text{ is odd;} \\
	0, &\text{if } n \text{ is even.}  
\end{cases}
\]
\item[(c)]
\[
\sum_{\substack{0 \le k < 2n \\ \gcd(k,n) = 1}} \zeta_{2n}^k 
= 0 \, .
\]
\end{enumerate}
\end{lemma}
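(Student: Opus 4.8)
The plan is to derive all three identities from Lemma~\ref{t:Mobius} by elementary manipulations; the one point requiring a little care is the observation that, for an integer $k$, one has $\gcd(k,2n) = 1$ if and only if $k$ is odd and $\gcd(k,n) = 1$ — the primes dividing $2n$ being exactly $2$ together with the primes dividing $n$.

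For part (a), I would apply Lemma~\ref{t:Mobius} with $n$ replaced by $2n$: the index set $\{0 \le k < 2n : \gcd(k,2n) = 1\}$ occurring there is, by the observation above, precisely $\{0 \le k < 2n : \gcd(k,n) = 1,\ k \text{ odd}\}$, so the sum equals $\mu(2n)$. It then remains to evaluate $\mu(2n)$: if $n$ is even then $4 \mid 2n$, so $2n$ is not square-free and $\mu(2n) = 0$; if $n$ is odd then $2n$ is square-free exactly when $n$ is, and in that case $2n$ has one more prime factor than $n$, so $\mu(2n) = \mu(2)\mu(n) = -\mu(n)$ (both sides vanishing when $n$ is not square-free). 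This gives the stated case distinction.

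For part (c), I would use the fixed-point-free involution $k \mapsto (k + n) \pmod{2n}$ on the index set $S := \{0 \le k < 2n : \gcd(k,n) = 1\}$. It maps $S$ into itself because $\gcd(k \pm n, n) = \gcd(k,n)$; it has no fixed point since $n \ge 1$ forces $n \not\equiv 0 \pmod{2n}$; and it sends $\zeta_{2n}^k$ to $\zeta_{2n}^{k}\,\zeta_{2n}^{\pm n} = -\zeta_{2n}^{k}$, using $\zeta_{2n}^{n} = \zeta_{2n}^{-n} = -1$. Grouping the terms of $\sum_{k \in S} \zeta_{2n}^k$ into these pairs shows the sum equals its own negative, hence is $0$.

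Finally, part (b) follows by combining (a) and (c): the sum in (c) splits, according to the parity of $k$, into the sum in (a) (the odd part) plus the sum in (b) (the even part), so the latter equals $-\mu(2n)$, with the same case distinction derived in (a). There is no real obstacle here — the work is essentially bookkeeping, the only things to keep straight being that the coprimality condition is imposed with respect to $n$ and not $2n$, and that the pairing $k \mapsto k+n$ genuinely has no fixed points and respects $S$.
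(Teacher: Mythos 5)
Your proof is correct, and you take a genuinely different path for parts (b) and (c). The paper proves (a) exactly as you do, but then establishes (b) directly — when $n$ is even the parity constraints on $k$ are contradictory so the sum is trivially $0$, and when $n$ is odd the substitution $k' = k/2$ reduces the sum to Lemma~\ref{t:Mobius} for $\zeta_n$, giving $\mu(n)$ — and finally obtains (c) by adding (a) and (b). You instead prove (c) directly via the fixed-point-free involution $k \mapsto k+n \pmod{2n}$ (which negates each term since $\zeta_{2n}^{n} = -1$) and then deduce (b) by subtracting (a) from (c). Your route has the mild aesthetic advantage that the proof of (c) is uniform in $n$, requiring no case split on the parity of $n$ — that case split is confined to the mere evaluation of $\mu(2n)$ at the end — whereas the paper's direct proof of (b) has to branch on whether $n$ is odd or even. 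Both are short, elementary, and ultimately bookkeeping; the content is equivalent.
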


\begin{proof}
Let $n$ be a positive integer.
\begin{enumerate}
\item[(a)]
``$\gcd(k,n) = 1$ and $k$ odd'' is equivalent to $\gcd(k,2n) = 1$. Therefore, by Lemma~\ref{t:Mobius}, 
\[
\sum_{\substack{0 \le k < 2n \\ \gcd(k,n) = 1 \\ k \text{ odd}}} \zeta_{2n}^k
= \sum_{\substack{0 \le k < 2n \\ \gcd(k,2n) = 1}} \zeta_{2n}^k
= \mu(2n) \, .
\]
\item[(b)]
If $n$ is even then ``$k$ even'' contradicts ``$\gcd(k,n) = 1$'', and therefore the sum is zero. In that case ($n$ even) we also have $\mu(2n) = 0$, since $2n$ is not square-free.
Assume now that $n$ is odd.
If $k$ is even then $\gcd(k,n) = 1$ is equivalent to $\gcd(k/2,n) = 1$, and letting $k' := k/2$ yields
\[
\sum_{\substack{0 \le k < 2n \\ \gcd(k,n) = 1 \\ k \text{ even}}} \zeta_{2n}^k 
= \sum_{\substack{0 \le k' < n \\ \gcd(k',n) = 1}} \zeta_{2n}^{2k'}
= \mu(n) \, .
\]
In that case ($n$ odd) we also have $\mu(2n) = -\mu(n)$.
\item[(c)]
Add the formulas in (a) and (b).
\qedhere
\end{enumerate}
\end{proof}

The following result computes the sum in Lemma~\ref{t:HLC_values} only on the elements of $C_g \cap Z_x$ corresponding to one specific cycle in $S_{a_{i,\ve}}$; $g$ and $x$ are assumed to have $B_n$ cycle types with all cycles of the same length and $\ZZ_2$-class.

\begin{lemma}\label{t:summation_1_Bn}
Fix $i,j \ge 1$ and $\ve,\theta \in \ZZ_2$, and let $x = x_{i,\ve} \in B_{i a_{i,\ve}}$ be a product of $a_{i,\ve}$ cycles, each of length $i$ and $\ZZ_2$-class $\ve$.
Let $e$ be a common divisor of $i$ and $j$, and denote $d := i/e$ and $\ell := j/e$.
Let $\sigma \in S_{a_{i,\ve}}$ be a permutation which has a single cycle of length $\ell$, and is the identity outside the support of this cycle.
Let $R_{i,\ve,j,\theta}(\sigma,e)$ be the set of all the elements $z \in Z_x \cong G_{i,\ve} \wr S_{a_{i,\ve}}$ corresponding to the underlying permutation (cycle) $\sigma$, with suitable $G_{i,\ve}$-classes, such that, as elements of $B_{i \ell} = B_{j d}$, they are products of $d$ disjoint cycles, each of length $j$ and $\ZZ_2$-class $\theta$. 
Denote
\[
K_{i,\ve,j,\theta}(\sigma,e) 
:= \frac{1}{(2i)^{\ell - 1}} \sum_{z \in R_{i,\ve,j,\theta}(\sigma,e)} \omega^x(z) \, .
\]    
Then $K_{i,\ve,j,\theta}(\sigma,e) = K_{\ve,\theta}(e)$ actually depends only on $\ve$, $\theta$ and $e$. Specifically:
\begin{align*}
K_{+,+}(e) 
&= \mu(2e) + 2 \cdot \mu(e)
= \begin{cases}
	\mu(e), &\text{if } e \text{ is odd;} \\
	2 \cdot \mu(e), &\text{if } e \text{ is even;} 
\end{cases} \\
K_{+,-}(e) 
&= -\mu(2e)
= \begin{cases}
	\mu(e), &\text{if } e \text{ is odd;} \\
	0, &\text{if } e \text{ is even.}
\end{cases} \\
K_{-,+}(e) 
&= -\mu(2e)
= \begin{cases}
	\mu(e), &\text{if } e \text{ is odd;} \\
	0, &\text{if } e \text{ is even.}  
\end{cases} \\
K_{-,-}(e) 
&= \mu(2e)
= \begin{cases}
	-\mu(e), &\text{if } e \text{ is odd;} \\
	0, &\text{if } e \text{ is even.}  
\end{cases}
\end{align*}
Written more compactly, for any $\ve, \theta \in \ZZ_2 = \{+1,-1\}$ and $e \ge 1$:
\[
K_{\ve,\theta}(e)
:= \ve \theta \cdot \mu(2e)
+ \frac{(1+\ve)(1+\theta)}{2} \cdot \mu(e).
\] 
\end{lemma}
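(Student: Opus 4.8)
The plan is to compute $K_{i,\ve,j,\theta}(\sigma,e)$ directly by enumerating the relevant elements $z \in R_{i,\ve,j,\theta}(\sigma,e)$ and summing $\omega^x(z)$, reducing everything to the single-cycle computation already set up in Corollary~\ref{t:cycles2_Bn}. Since $\sigma$ is a single $\ell$-cycle in $S_{a_{i,\ve}}$ and $z$ restricts to this cycle with some $G_{i,\ve}$-class $\gamma$, the element $z$ is determined by $\gamma$ together with a choice of the ``wreath coordinates'' along the cycle; but because $\omega^x$ is trivial on the wreathing group and homogeneous on the base, the value $\omega^x(z)$ depends only on the product class $\gamma$, and there are exactly $(2i)^{\ell-1}$ choices of wreath coordinates giving each fixed $\gamma$ (this is the origin of the normalizing factor $(2i)^{\ell-1}$ in the definition of $K_{i,\ve,j,\theta}(\sigma,e)$). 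So the sum collapses to $\sum_{\gamma} \omega_{i,\ve}(\gamma)$, where $\gamma$ ranges over those classes in $G_{i,\ve}$ that, via Corollary~\ref{t:cycles2_Bn}, yield $d$ cycles of length $j$ and $\ZZ_2$-class $\theta$.

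Next I would split into the two cases of Corollary~\ref{t:cycles2_Bn}. In case (a), $\ve = +1$, $G_{i,+} \cong \ZZ_2 \times \ZZ_i$, $\gamma = (\delta,\zeta_i^k)$, and the constraints are $\gcd(k,i) = d = i/e$, i.e.\ $\gcd(k,i) = i/e$, together with $\theta = \delta^{i/d} = \delta^e$. Writing $k = (i/e)k'$ with $\gcd(k',e) = 1$ and $0 \le k' < e$, the character value is $\omega_{i,+}(\gamma) = \zeta_i^k = \zeta_e^{k'}$ (recall $\omega_{i,+}$ is trivial on the $\ZZ_2$ factor and primitive on $\ZZ_i$). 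Summing over the admissible $\delta \in \ZZ_2$ with $\delta^e = \theta$ contributes a factor $\roots^{\ZZ_2}_e(\theta) = |R_{e,\theta}|$, and summing $\zeta_e^{k'}$ over $\gcd(k',e) = 1$ gives $\mu(e)$ by Lemma~\ref{t:Mobius}. The product is $|R_{e,\theta}| \cdot \mu(e)$, which equals $2\mu(e)$ when $e$ is odd and $\theta = +1$ (both signs square-root to $+1$... wait, $\delta^e = \theta$: if $e$ is odd both the constraint $\delta^e=\theta$ has a unique solution $\delta = \theta$, so $|R_{e,\theta}| = 1$; if $e$ is even then $\delta^e = 1$ always, so the constraint forces $\theta = +1$ and then both $\delta$ work, $|R_{e,\theta}| = 2$, while for $\theta = -1$ there are no solutions). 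This reproduces $K_{+,+}(e) = \mu(e)$ for $e$ odd, $2\mu(e)$ for $e$ even, and $K_{+,-}(e) = \mu(e)$ for $e$ odd, $0$ for $e$ even. In case (b), $\ve = -1$, $G_{i,-} \cong \ZZ_{2i}$, $\gamma = \zeta_{2i}^k$, the constraints are $\gcd(k,i) = d = i/e$ and $\theta = (-1)^{k/d} = (-1)^{k'}$ where $k = (i/e)k'$; now $\omega_{i,-}(\gamma) = \zeta_{2i}^k = \zeta_{2e}^{k'}$ with $0 \le k' < 2e$. The sign condition $\theta = (-1)^{k'}$ selects $k'$ of a fixed parity, and the sum becomes $\sum_{k'} \zeta_{2e}^{k'}$ over $\gcd(k',e) = 1$ and $k'$ of the prescribed parity, which is exactly Lemma~\ref{t:Mobius2}(a) or (b): for $\theta = -1$ ($k'$ odd) we get $\mu(2e)$, giving $K_{-,-}(e) = \mu(2e)$; for $\theta = +1$ ($k'$ even) we get $-\mu(2e)$, giving $K_{-,+}(e) = -\mu(2e)$. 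Finally I would check the compact formula $K_{\ve,\theta}(e) = \ve\theta\,\mu(2e) + \tfrac{(1+\ve)(1+\theta)}{2}\mu(e)$ against the four cases: the second term is nonzero only when $\ve = \theta = +1$, where it contributes $2\mu(e)$, and $\mu(2e) = \mu(e)$ for odd $e$ while $\mu(2e) = 0$ for even $e$, so the four specializations match.

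I expect the main obstacle to be the bookkeeping in the case-$\ve = -1$ analysis, specifically making precise the claim that the number of wreath-coordinate choices realizing a fixed $G_{i,-}$-class $\gamma$ is exactly $(2i)^{\ell-1}$ independently of $\gamma$ and $\theta$, and that each such $z$ genuinely lands in $R_{i,-1,j,\theta}(\sigma,e)$ — i.e.\ that Corollary~\ref{t:cycles2_Bn} characterizes membership, not merely gives a necessary condition. One must verify that as $k$ ranges over $0 \le k < 2i$ with $\gcd(k,i) = i/e$, the value $\zeta_{2i}^k$ ranges over a well-defined subset of $\ZZ_{2i}$ and that distinct $k$ give distinct $\gamma$, so there is no overcounting; the substitution $k = (i/e)k'$ with $0 \le k' < 2e$ and $\gcd(k',e) = 1$ is a bijection onto the admissible set, and the parity of $k'$ is well-defined modulo the constraint. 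A secondary subtlety is handling the degenerate case $e$ even in case (b): here $\gcd(k',e) = 1$ forces $k'$ odd, so $(-1)^{k'} = -1$ automatically, meaning $R_{i,-1,j,+1}(\sigma,e) = \emptyset$ and $K_{-,+}(e) = 0 = -\mu(2e)$ since $\mu(2e) = 0$; one should confirm this vacuous case is consistent with the stated formula rather than an exception. Once these enumeration details are pinned down, the rest is the routine Möbius-sum evaluation via Lemmas~\ref{t:Mobius} and~\ref{t:Mobius2}.
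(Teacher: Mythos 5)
Your proof is correct and follows the same route as the paper's: reduce to a sum of $\omega^x$-values over admissible $G_{i,\ve}$-classes via the normalizing factor $(2i)^{\ell-1}$, characterize those classes via Corollary~\ref{t:cycles2_Bn}, substitute $k'=k/d$, and evaluate the resulting Möbius-type sums using Lemmas~\ref{t:Mobius} and~\ref{t:Mobius2}; your packaging of the $\ve=+1$ case as $|R_{e,\theta}|\cdot\mu(e)$ is a mildly slicker way to phrase the same parity split. One small slip in your final sanity check: you wrote $\mu(2e)=\mu(e)$ for odd $e$, but in fact $\mu(2e)=\mu(2)\mu(e)=-\mu(e)$ when $e$ is odd (indeed this is what the paper uses, and your own case-by-case derivations correctly produce $K_{+,+}(e)=-\mu(e)+2\mu(e)=\mu(e)$ for odd $e$); the body of your argument is unaffected, but the consistency check as written would not close.
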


\begin{proof}
By 
Corollary~\ref{t:cycles2_Bn},
the set of possible $G_{i,\ve}$-classes $\gamma$ of elements $z \in R_{i,\ve,j,\theta}(\sigma,e)$ depends on $i$, $\ve$, $d = i/e$ and $\theta$, but not on $j$ or $\sigma$ (as long as $j$ is a multiple of $e$).
Denote this set by $C_{\ve,\theta}(i,d)$.
Then:
\begin{align*}
C_{+,+}(i,d) &=
\begin{cases}
	\{(+1,\zeta_i^k) \,:\, \gcd(k,i) = d\}, &\text{if } i/d \text{ is odd;} \\
	\{(\pm 1,\zeta_i^k) \,:\, \gcd(k,i) = d\}, &\text{if } i/d \text{ is even;}         \end{cases} \\
C_{+,-}(i,d) &=
\begin{cases}
	\{(-1,\zeta_i^k) \,:\, \gcd(k,i) = d\}, &\text{if } i/d \text{ is odd;} \\
	\varnothing, &\text{if } i/d \text{ is even;} 
\end{cases} \\ 
C_{-,+}(i,d) &=
\{\zeta_{2i}^k \,:\, \gcd(k,i) = d,\, k/d \text{ even}\}; \\
C_{-,-}(i,d) &=
\{\zeta_{2i}^k \,:\, \gcd(k,i) = d,\, k/d \text{ odd}\}. 
\end{align*}
The number of elements $z \in R_{i,\ve,j,\theta}(\sigma,e)$ with any specific $G_{i,\ve}$-class is $|G_{i,\ve}|^{\ell - 1} = (2i)^{\ell - 1}$.
Denote 
\[
K_{i,\ve,j,\theta}(\sigma,e) 
:= \frac{1}{(2i)^{\ell - 1}} \sum_{z \in R_{i,\ve,j,\theta}(\sigma,e)} \omega^x(z) \, .
\]
It follows, by Definition~\ref{def:HLC_Bn}(a) of $\omega^x$, that
\[
K_{i,\ve,j,\theta}(\sigma,e)
= \begin{cases}
\sum\limits_{\gamma = (\delta,\zeta_i^k) \in C_{+,\theta}(i,d)} \zeta_i^k,
&\text{if } \ve = +1; \\
& \\
\sum\limits_{\gamma = \zeta_{2i}^k \in C_{-,\theta}(i,d)} \zeta_{2i}^k,
&\text{if } \ve = -1.
\end{cases}
\]
Consider, for example, the case of $\ve = \theta = +1$ and $i/d$ odd:
\[
K_{i,+,j,+}(\sigma,e)
= \sum_{\gamma = (\delta,\zeta_i^k) \in C_{+,+}(i,d)} \zeta_i^k 
= \sum_{\substack{\gamma = (+1,\zeta_i^k) \\ 
	0 \le k < i \\ \gcd(k,i) = d}} \zeta_i^k
	= \sum_{\substack{0 \le k < i \\ \gcd(k,i) = d}} \zeta_i^k.
	\]
	Denoting $k' := k/d$ and using Lemma~\ref{t:Mobius}, we conclude that
	\[
	\sum_{\substack{0 \le k < i \\ \gcd(k,i) = d}} \zeta_i^k
	= \sum_{\substack{0 \le k' < i/d \\ \gcd(k',i/d) = 1}} \zeta_{i/d}^{k'}
	= \mu(i/d) \, .
	\]
	A similar computation applies to all the cases with $\ve = +1$, yielding
	\[
	K_{i,+,j,+}(\sigma,e)
	= \begin{cases}
\mu(i/d), &\text{if } i/d \text{ is odd;} \\
2 \cdot \mu(i/d), &\text{if } i/d \text{ is even} 
\end{cases}
\]
and
\[
K_{i,+,j,-}(\sigma,e) 
= \begin{cases}
\mu(i/d), &\text{if } i/d \text{ is odd;} \\
0, &\text{if } i/d \text{ is even.}
\end{cases}
\]
The cases with $\ve = -1$ are slightly more involved.
Consider, for example, the case of $\ve = -1$ and $\theta = +1$:
\[
K_{i,-,j,+}(\sigma,e)
= \sum_{\gamma = \zeta_{2i}^k \in C_{-,+}(i,d)} \zeta_{2i}^k 
= \sum_{\substack{0 \le k < 2i \\ \gcd(k,i) = d \\ k/d \text{ even}}} \zeta_{2i}^k \, .
\]
Denoting $k' := k/d$ 
and using Lemma~\ref{t:Mobius2}(b), 
we obtain
\[
\sum_{\substack{0 \le k < 2i \\ \gcd(k,i) = d \\ k/d \text{ even}}} \zeta_{2i}^k
= \sum_{\substack{0 \le k' < 2i/d \\ \gcd(k',i/d) = 1 \\ k' \text{ even}}} \zeta_{2i/d}^{k'} 
= -\mu(2i/d)
= \begin{cases}
\mu(i/d), &\text{if } i/d \text{ is odd;} \\
0, &\text{if } i/d \text{ is even.}  
\end{cases}
\]
Finally, for the case of $\ve = \theta = -1$:
\[
K_{i,-,j,-}(\sigma,e)
= \sum_{\gamma = \zeta_{2i}^k \in C_{-,-}(i,d)} \zeta_{2i}^k 
= \sum_{\substack{0 \le k < 2i \\ \gcd(k,i) = d \\ k/d \text{ odd}}} \zeta_{2i}^k \, .
\]
Denoting $k' := k/d$ 
and using Lemma~\ref{t:Mobius2}(a), 
we obtain
\[
\sum_{\substack{0 \le k < 2i \\ \gcd(k,i) = d \\ k/d \text{ odd}}} \zeta_{2i}^k
= \sum_{\substack{0 \le k' < 2i/d \\ \gcd(k',i/d) = 1 \\ k' \text{ odd}}} \zeta_{2i/d}^{k'} 
= \mu(2i/d)
= \begin{cases}
-\mu(i/d), &\text{if } i/d \text{ is odd;} \\
0, &\text{if } i/d \text{ is even.}  
\end{cases}
\]
Replacing $i/d$ by $e$, everywhere, gives formulas involving $\mu(e)$ which depend on the parity of $e$. Using, in addition, the fact that
\[
\mu(2e)
= \begin{cases}
-\mu(e), &\text{if } e \text{ is odd;} \\
0, &\text{if } e \text{ is even}  
\end{cases}
\]
gives alternative formulas, independent of the parity of $e$. Using the signs $\ve$ and $\theta$ explicitly finally leads to the compact formula
\[
K_{\ve,\theta}(e)
:= \ve \theta \cdot \mu(2e)
+ \frac{(1+\ve)(1+\theta)}{2} \cdot \mu(e),
\] 
as claimed.
\end{proof}

\subsection{Proof of Theorem~\ref{t:summation_4_Bn}}

Extending the previous result, we shall now sum $\omega^x$ on the whole set $C_g \cap Z_x$, with increasing generality of the cycle types of $g$ and $x$. 
This will lead to a proof of Theorem~\ref{t:summation_4_Bn}, providing a generating function for the values of higher Lie characters $\psi_{B_n}^\blambda = \psi_{B_n}^x$.

\begin{definition}
For positive integers $i$ and $j$, let
\[
E(i,j) := \{e \ge 1 \,:\, e \text{ divides both } i \text{ and } j\}.
\]
\end{definition}

Note that $E(i,j)$ is never empty, since it always contains $e = 1$.

For an indeterminate $s$, let $\CC[[s]]$ be the ring of formal power series in $s$ over the field $\CC$.
We now extend Lemma~\ref{t:summation_1_Bn}, and compute the sum in Lemma~\ref{t:HLC_values} on the whole set $C_g \cap Z_x$, still restricting $g$ and $x$ to have cycle types with all cycles of the same length and $\ZZ_2$-class.

\begin{lemma}\label{t:summation_2_Bn}
Fix $i,j \ge 1$ and $\ve,\theta \in \ZZ_2$.
For any integer $m \ge 0$, let $x = x_{i,\ve}(m) \in B_{im}$ be a product of $m$ disjoint cycles, each of length $i$ and $\ZZ_2$-class $\ve$.
Let $R_{i,\ve,j,\theta}(m)$ be the set of all elements $z \in Z_{B_{im}}(x_{i,\ve}(m)) \cong G_{i,\ve} \wr S_m$ which, as elements of $B_{im}$, are products of $im/j$ disjoint cycles, each of length $j$ and $\ZZ_2$-class $\theta$.
(Of course, necessarily $j$ divides $im$.)
Then, in $\CC[[s]]$,
\[
\sum_{m \ge 0} 
\sum_{z \in R_{i,\ve,j,\theta}(m)} \omega^x(z)  
\frac{s^m}{m!}
= \exp \left( \sum_{e \in E(i,j)}
K_{\ve,\theta}(e) \,
\frac{(2is)^{j/e}}{2ij/e} \right) ,
\]
where $K_{\ve,\theta}(e)$ is defined as in Lemma~\ref{t:summation_1_Bn}.
\end{lemma}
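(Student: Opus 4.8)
The plan is to recognize the left-hand side as an instance of the exponential formula for ``$G_{i,\ve}$-colored permutations of $[m]$'' subject to the local constraint defining $R_{i,\ve,j,\theta}(m)$, reading off the single-cycle contribution from Corollary~\ref{t:cycles2_Bn} and Lemma~\ref{t:summation_1_Bn}. First I would make precise that both the condition ``$z \in R_{i,\ve,j,\theta}(m)$'' and the weight $\omega^x(z)$ are local to the cycles of the underlying permutation. Write $z \in Z_{B_{im}}(x_{i,\ve}(m)) \cong G_{i,\ve} \wr S_m$ as a permutation $\sigma \in S_m$ together with a tuple of colors $(g_1,\ldots,g_m) \in G_{i,\ve}^m$, and for each cycle $c$ of $\sigma$ set $\gamma_c := \prod_{t \in c} g_t$, which is well defined since $G_{i,\ve}$ is abelian. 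By Definition~\ref{def:HLC_Bn}(a), $\omega^x$ is the linear character of $G_{i,\ve}\wr S_m$ that is trivial on $S_m$ and equals $\bigotimes_t \omega_{i,\ve}$ on the base group; being a homomorphism to $\CC^\times$ it satisfies $\omega^x(z) = \prod_t \omega_{i,\ve}(g_t) = \prod_c \omega_{i,\ve}(\gamma_c)$, so the weight factors over the cycles of $\sigma$. Likewise, as an element of $B_{im}$, $z$ is a product of $j$-cycles of $\ZZ_2$-class $\theta$ if and only if the restriction of $z$ to the block of $[im]$ supporting each cycle of $\sigma$ is. Hence $R_{i,\ve,j,\theta}(m)$ consists exactly of those colored permutations all of whose $\sigma$-cycles are ``legal'', and $\omega^x$ restricts to a multiplicative weight on them.

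Next I would compute the weighted number $w_\ell$ of legal single-cycle structures on a fixed $\ell$-element set. By Corollary~\ref{t:cycles2_Bn}, a cycle of length $\ell$ carrying class $\gamma$ is legal precisely when $\ell = j/e$ for some $e \in E(i,j)$ and $\gamma$ lies in the corresponding set of $G_{i,\ve}$-classes, and Lemma~\ref{t:summation_1_Bn} evaluates the sum of $\omega_{i,\ve}$ over that set as $K_{\ve,\theta}(e)$. Since there are $(2i)^{\ell-1}$ colorings of a fixed cyclic order with any prescribed class, and $(\ell-1)!$ cyclic orders on $\ell$ points, we get $w_\ell = (\ell-1)!\,(2i)^{\ell-1}K_{\ve,\theta}(e)$ when $\ell = j/e$ with $e \in E(i,j)$, and $w_\ell = 0$ otherwise.

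A legal colored permutation of $[m]$ is the same datum as a set partition of $[m]$ equipped with a legal single-cycle structure on each block, and the weight multiplies over blocks; so the exponential formula yields
\[
    \sum_{m \ge 0} \frac{s^m}{m!} \sum_{z \in R_{i,\ve,j,\theta}(m)} \omega^x(z)
    = \exp\!\left( \sum_{\ell \ge 1} \frac{w_\ell}{\ell!}\, s^\ell \right).
\]
Substituting $w_{j/e} = (j/e - 1)!\,(2i)^{j/e-1}K_{\ve,\theta}(e)$ and using $(2i)^{j/e-1}/(j/e) = (2i)^{j/e}/(2ij/e)$, the exponent becomes $\sum_{e \in E(i,j)} K_{\ve,\theta}(e)(2is)^{j/e}/(2ij/e)$, which is the claimed right-hand side; the $m = 0$ term supplies the constant $1$ on both sides. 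Alternatively, one can avoid naming the exponential formula and argue by direct bookkeeping as in the proof of Lemma~\ref{t:root_enumerator_similar_cycles_Bn}: sum over the number of legal $\sigma$-cycles of each admissible length, then over realizations on $[m]$, then over colors, using $\bigl(\sum_\gamma x_\gamma\bigr)^n = n!\sum \prod x_\gamma^{n_\gamma}/n_\gamma!$ to collapse the color sums into the $K_{\ve,\theta}(e)$ and to recognize the outcome as a coefficient extraction from an exponential.

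The main obstacle is organizational rather than conceptual: all the arithmetic (the M\"obius evaluations behind $K_{\ve,\theta}(e)$) and the single-cycle $B_n$-structure theory (Corollary~\ref{t:cycles2_Bn}) are already in hand, so the work lies in isolating the cycle-factorization of both the constraint ``$z \in R_{i,\ve,j,\theta}(m)$'' and the character $\omega^x$, and in carefully tracking the powers of $2i$ and the factorials so that $(2i)^{\ell-1}/\ell$ turns into $(2i)^{j/e}/(2ij/e)$.
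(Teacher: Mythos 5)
Your proof is correct and follows essentially the same route as the paper: both multiply the single-cycle contribution from Lemma~\ref{t:summation_1_Bn} over the cycles of $z$ (using that $\omega^x$ is multiplicative and the membership constraint is cycle-local, by Corollary~\ref{t:cycles2_Bn}) and then assemble the exponential generating function; the paper simply unpacks the exponential formula by hand, summing over tuples in $N_{i,j}(m)$ and using the count of permutations of a given cycle type, where you invoke the exponential formula by name after computing the per-cycle weight $w_\ell$.
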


\begin{proof}    
Assume that $E(i,j) = \{e_1, \ldots, e_q\}$, and define $\ell_k := j/e_k$ $(1 \le k \le q)$.
For each $m \ge 0$, let 
\[
N_{i,j}(m) 
:= \{(n_1,\ldots,n_q) \in \ZZ_{\ge 0}^q \,:\, n_1 \ell_1 + \ldots n_q \ell_q = m\}.  
\]
By Corollary~\ref{t:cycles2_Bn}, 
the possible cycle lengths of elements of $R_{i,\ve,j,\theta}(m)$, viewed as elements of $G_{i,\ve} \wr S_m$, are $\ell_1, \ldots, \ell_q$. If such an element has $n_k$ cycles of length $\ell_k$ $(1 \le k \le q)$, then clearly $(n_1,\ldots,n_q) \in N_{i,j}(m)$. 
The number of permutations in $S_m$ with this cycle structure is 
\[
\frac{m!}{n_1! \cdots n_q! \cdot \ell_1^{n_1} \cdots \ell_q^{n_q}}.
\]
By Lemma~\ref{t:summation_1_Bn}, for each common divisor $e$ of $i$ and $j$ and each specific cycle $\sigma \in S_m$ of length $\ell = j/e$,
\[
\sum_{z \in R_{i,\ve,j,\theta}(\sigma,e)} \omega^x(z) 
= (2i)^{\ell - 1} \cdot K_{\ve,\theta}(e).
\]    
The linearity of the character $\omega^x$ thus implies that
\begin{align*}
\sum_{z \in R_{i,\ve,j,\theta}(m)} \omega^x(z) 
&= \sum_{(n_1,\ldots,n_q) \in N_{i,j}(m)}
\frac{m!}{\prod_{k=1}^{q} n_k! \ell_k^{n_k}}
\cdot \prod_{k=1}^{q} 
\left( (2i)^{\ell_k - 1}  K_{\ve,\theta}(e_k) \right)^{n_k} \\
&= m! \sum_{(n_1,\ldots,n_q) \in N_{i,j}(m)} 
\prod_{k=1}^{q} \frac{1}{n_k!} \left( 
K_{\ve,\theta}(e_k) \,
\frac{(2i)^{\ell_k}}{2i \ell_k} \right)^{n_k} .
\end{align*}    
Let $s$ be an indeterminate, and consider the ring $\CC[[s]]$ of formal power series in $s$ over the field $\CC$.
By the definition of $N_{i,j}(m)$ and the above computation, it follows that the number
\[
\frac{1}{m!} \sum_{z \in R_{i,\ve,j,\theta}(m)} \omega^x(z) 
\]
is the coefficient of $s^m$ in the product
\begin{align*}
\prod_{k=1}^{q} \sum_{n_k=0}^{\infty} \frac{1}{n_k!} 
\left( K_{\ve,\theta}(e_k) \,
\frac{(2is)^{\ell_k}}{2i \ell_k} \right)^{n_k}
&= \prod_{k=1}^{q} 
\exp \left( K_{\ve,\theta}(e_k) \,
\frac{(2is)^{\ell_k}}{2i \ell_k} \right) \\
&= \prod_{e \in E(i,j)} 
\exp \left( K_{\ve,\theta}(e) \,
\frac{(2is)^{j/e}}{2ij/e} \right) .
\end{align*}
In other words,
\begin{align*}
\sum_{m \ge 0} 
\sum_{z \in R_{i,\ve,j,\theta}(m)} \omega^x(z) \,
\frac{s^m}{m!} 
&= \prod_{e \in E(i,j)}  
\exp \left( K_{\ve,\theta}(e) \,
\frac{(2is)^{j/e}}{2ij/e} \right) \\ 
&= \exp \left( \sum_{e \in E(i,j)}  
K_{\ve,\theta}(e) \,
\frac{(2is)^{j/e}}{2ij/e} \right) . 
\qedhere
\end{align*}
\end{proof}

Now let $s$ be an indeterminate and $\{t_{j,\theta} \,:\, j \ge 1, \theta \in \ZZ_2\}$ be a countable set of indeterminates, denoted succinctly by $\bt$. Consider the ring of formal power series $\CC[[s,\bt]]$.
We extend Lemma~\ref{t:summation_2_Bn} and compute the sum in Lemma~\ref{t:HLC_values} on the whole set $C_g \cap Z_x$, restricting only $x$ to have a cycle type with all cycles of the same length and $\ZZ_2$-class.

\begin{lemma}\label{t:summation_3_Bn}
Fix $i \ge 1$ and $\ve \in \ZZ_2$.
For any integer $m \ge 0$, let $x = x_{i,\ve}(m) \in B_{im}$ be a product of $m$ disjoint cycles, each of length $i$ and $\ZZ_2$-class $\ve$. Let $R_{i,\ve}(m) := Z_{B_{im}(x_{i,\ve}(m))} \cong G_{i,\ve} \wr S_m$. 
As an element of $B_{im}$, write each $z \in R_{i,\ve}(m)$ as a product of $m_{j,\theta}(z)$ disjoint cycles of length $j$ and $\ZZ_2$-class $\theta$ $(j \ge 1, \theta \in \ZZ_2)$.
Then, in $\CC[[s,\bt]]$,
\[
\sum_{m \ge 0} 
\sum_{z \in R_{i,\ve}(m)} 
\omega^x(z) \, 
\frac{s^m}{m!}
\prod_{j,\theta} t_{j,\theta}^{j m_{j,\theta}(z)}
= \exp \left( 
\sum_{j,\theta} \sum_{e \in E(i,j)}  
K_{\ve,\theta}(e) \,
\frac{(2is t_{j,\theta}^i)^{j/e}}{2ij/e} 
\right) .
\]
\end{lemma}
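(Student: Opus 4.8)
The plan is to follow the same strategy as in the proof of Lemma~\ref{t:summation_2_Bn}: organize the sum over $R_{i,\ve}(m)$ according to the cycle decomposition of the underlying permutation in $S_m$, but now attach to each wreath-cycle the monomial in the $t_{j,\theta}$'s that records the $B_{im}$-cycle type it produces. Since $\omega^x$ is a linear character — a product, over the cycles of the underlying permutation $\sigma\in S_m$, of the value assigned to each cycle — and since $B$-cycle counts add over these wreath-cycles, both $\omega^x(z)$ and the monomial $\prod_{j,\theta}t_{j,\theta}^{jm_{j,\theta}(z)}$ factor over the cycles of $\sigma$. Hence the exponential generating function in $s$ will be $\exp$ of the contribution of a single cycle, exactly the mechanism already used in Lemma~\ref{t:summation_2_Bn}.

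First I would pin down the single-cycle contribution. Let $\sigma$ be a single $\ell$-cycle in $S_m$. By Corollary~\ref{t:cycles2_Bn}, each $G_{i,\ve}$-decoration of $\sigma$ falls into exactly one category indexed by a pair $(e,\theta)$ with $e\mid i$ and $\theta\in\ZZ_2$: such a decoration turns the cycle, viewed inside $B_{i\ell}=B_{jd}$ with $j:=\ell e$ and $d:=i/e$, into a product of $d$ disjoint cycles of length $j$ and $\ZZ_2$-class $\theta$, and therefore contributes to the monomial the factor $t_{j,\theta}^{jd}=t_{\ell e,\theta}^{i\ell}$, which depends only on $(\ell,e,\theta)$. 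Lemma~\ref{t:summation_1_Bn} evaluates the sum of $\omega^x$ over the decorations of $\sigma$ lying in category $(e,\theta)$ as $(2i)^{\ell-1}K_{\ve,\theta}(e)$. Since the categories partition all decorations of $\sigma$, the total single-cycle contribution is $\sum_{e\mid i}\sum_{\theta}(2i)^{\ell-1}K_{\ve,\theta}(e)\,t_{\ell e,\theta}^{i\ell}$.

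Next I would assemble the full sum. Grouping $z\in R_{i,\ve}(m)$ by the refined data $(n_{\ell,e,\theta})$, where $n_{\ell,e,\theta}$ is the number of $\ell$-cycles of $\sigma$ whose decoration lies in category $(e,\theta)$ (so $\sum_{\ell,e,\theta}\ell\,n_{\ell,e,\theta}=m$), the number of permutations with a prescribed cycle decomposition is $m!/\prod_{\ell,e,\theta}n_{\ell,e,\theta}!\,\ell^{n_{\ell,e,\theta}}$, and linearity of $\omega^x$ gives
\[
\sum_{z\in R_{i,\ve}(m)}\omega^x(z)\prod_{j,\theta}t_{j,\theta}^{jm_{j,\theta}(z)}
=\sum_{(n_{\ell,e,\theta})}\frac{m!}{\prod n_{\ell,e,\theta}!\,\ell^{n_{\ell,e,\theta}}}\prod_{\ell,e,\theta}\bigl((2i)^{\ell-1}K_{\ve,\theta}(e)\,t_{\ell e,\theta}^{i\ell}\bigr)^{n_{\ell,e,\theta}}.
\]
Dividing by $m!$, summing over $m$, and recognizing the result as a product of exponential series in $s$ (as in Lemma~\ref{t:summation_2_Bn}) yields
\[
\sum_{m\ge0}\sum_{z\in R_{i,\ve}(m)}\omega^x(z)\frac{s^m}{m!}\prod_{j,\theta}t_{j,\theta}^{jm_{j,\theta}(z)}
=\exp\Bigl(\sum_{\ell\ge1}\sum_{e\mid i}\sum_{\theta\in\ZZ_2}\frac{K_{\ve,\theta}(e)(2is)^{\ell}\,t_{\ell e,\theta}^{i\ell}}{2i\ell}\Bigr),
\]
and the substitution $j:=\ell e$ — a bijection between $\{(\ell,e):\ell\ge1,\ e\mid i\}$ and $\{(j,e):j\ge1,\ e\in E(i,j)\}$, since $e\mid i$ and $e\mid\ell e=j$ force $e\in E(i,j)$ — turns the right-hand side into $\exp\bigl(\sum_{j,\theta}\sum_{e\in E(i,j)}K_{\ve,\theta}(e)(2is\,t_{j,\theta}^i)^{j/e}/(2ij/e)\bigr)$, the claimed formula.

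The only genuinely delicate point is the bookkeeping in this reindexing: one must verify that the categorization of decorations by $(e,\theta)$ partitions \emph{all} decorations of a given cycle (this is exactly Corollary~\ref{t:cycles2_Bn}, with empty categories $(e,\theta)$ harmlessly accompanied by $K_{\ve,\theta}(e)=0$), that each $z\in R_{i,\ve}(m)$ is thereby counted once with the correct monomial, and that the $(\ell,e)$-indexed exponent matches the $(j,e)$-indexed target. Beyond this there is no obstacle: the separation into independent single-cycle contributions is forced by linearity of $\omega^x$, and the single-cycle evaluation is precisely the content of Lemma~\ref{t:summation_1_Bn} together with Corollary~\ref{t:cycles2_Bn}, so the whole argument is a routine — if slightly intricate — refinement of the proof of Lemma~\ref{t:summation_2_Bn}.
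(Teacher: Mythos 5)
Your proof is correct. The key verifications all check out: the categorization of wreath-cycle decorations by $(e,\theta)$ is indeed a partition of all decorations (Corollary~\ref{t:cycles2_Bn}), the single-cycle weight $(2i)^{\ell-1}K_{\ve,\theta}(e)$ comes straight from Lemma~\ref{t:summation_1_Bn}, the monomial $\prod_{j,\theta}t_{j,\theta}^{jm_{j,\theta}(z)}$ does factor over the cycles of $\sigma$ as $\prod t_{\ell e,\theta}^{i\ell}$ (a cycle of length $\ell$ in category $(e,\theta)$ produces $d=i/e$ disjoint $B$-cycles of length $j=\ell e$, so it contributes $t_{j,\theta}^{jd}=t_{\ell e,\theta}^{i\ell}$), and the reindexing $j=\ell e$ is a bijection onto $\{(j,e):e\in E(i,j)\}$.

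Where you differ from the paper is in how you organize the summation. The paper's proof of this lemma invokes Lemma~\ref{t:summation_2_Bn} as a black box: it first partitions the $m$ underlying $i$-cycles of $x$ into blocks indexed by the target pair $(j,\theta)$ — a multinomial coefficient $m!/\prod_{j,\theta}m_{j,\theta}!$ — then applies Lemma~\ref{t:summation_2_Bn} on each block with the substitution $s\mapsto s\,t_{j,\theta}^i$, and multiplies the resulting exponentials. You instead bypass Lemma~\ref{t:summation_2_Bn} entirely and re-derive the generating function in one pass from the single-cycle evaluation of Lemma~\ref{t:summation_1_Bn}, refining the cycle data all the way to triples $(\ell,e,\theta)$ at the outset and applying the exponential formula to that finer stratification. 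Both proofs reduce to the same two ingredients (Lemma~\ref{t:summation_1_Bn} and the exponential formula for permutations by cycle type); the paper's modular version buys brevity and reuse of an intermediate lemma, while your single-pass version makes the final reindexing $j=\ell e$ and the resulting triple sum over $(j,\theta,e\in E(i,j))$ more transparent, at the cost of redoing bookkeeping already done once in the proof of Lemma~\ref{t:summation_2_Bn}.
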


\begin{proof}
Following Lemma~\ref{t:summation_2_Bn}, fix integers $m_{j,\theta} \ge 0$ $(j \ge 1, \theta \in \ZZ_2)$ such that 
$\sum_{j,\theta} m_{j,\theta} = m$. 
Dividing the set of $m$ cycles of $x_{i,\ve}(m)$ into subsets of corresponding sizes $m_{j,\theta}$ can be done in
\[
\frac{m!}{\prod_{j,\theta} m_{j,\theta}!}
\]
ways. On each piece $G_{i,\ve} \wr S_{m_{j,\theta}}$ we would like to consider $R_{i,\ve,j,\theta}(m_{j,\theta})$, as in Lemma~\ref{t:summation_2_Bn}; note that, by that result, $R_{i,\ve,j,\theta}(m_{j,\theta}) = \varnothing$ unless $j$ divides $i m_{j,\theta}$.
By the linearity of $\omega^x$,
\[
\sum_{z \in R_{i,\ve}(m)} \omega^x(z)
= \sum_{\substack{m_{j,\theta} \ge 0 \\ \sum\limits_{j,\theta} m_{j,\theta} = m}}
\frac{m!}{\prod_{j,\theta} m_{j,\theta}!}
\prod_{j,\theta} 
\sum_{z_{j,\theta} \in R_{i,\ve,j,\theta}(m_{j,\theta})} \omega^x(z_{j,\theta}) 
\]
or, equivalently,
\[ 
\sum_{m \ge 0} \sum_{z \in R_{i,\ve}(m)} \omega^x(z) \,
\frac{s^m}{m!} 
= \prod_{j,\theta}         
\sum_{m_{j,\theta} \ge 0}
\sum_{z_{j,\theta} \in R_{i,\ve,j,\theta}(m_{j,\theta})} \omega^x(z_{j,\theta}) \,
\frac{s^{m_{j,\theta}}}{m_{j,\theta}!} .
\]
Assume now that $z \in R_{i,\ve}(m)$, as an element of $B_{im}$, is a product of $m_{j,\theta}(z)$ disjoint cycles of length $j$ and $\ZZ_2$-class $\theta$ $(j \ge 1, \theta \in \ZZ_2)$.
This yields a subdivision of the set of $m$ cycles (of length $i$ each) of $x_{i,\ve}(m)$ into subsets of sizes $m_{j,\theta} = j m_{j,\theta}(z)/i$, so that
\[
\sum_{j,\theta} j m_{j,\theta}(z) 
= \sum_{j,\theta} i m_{j,\theta} 
= i m.
\]
In order to keep track of the individual numbers $j m_{j,\theta}(z) = i m_{j,\theta}$, let us use additional indeterminates $t_{j,\theta}$ $(j \ge 1, \theta \in \ZZ_2)$. The previous formula turns into
\[ 
\sum_{m \ge 0} \sum_{z \in R_{i,\ve}(m)} \omega^x(z) \,
\frac{s^m}{m!} 
\prod_{j,\theta} t_{j,\theta}^{j m_{j,\theta}(z)}
= \prod_{j,\theta}         
\sum_{m_{j,\theta} \ge 0}
\sum_{z_{j,\theta} \in R_{i,\ve,j,\theta}(m_{j,\theta})} \omega^x(z_{j,\theta}) \,
\frac{s^{m_{j,\theta}} t_{j,\theta}^{i m_{j,\theta}}}{m_{j,\theta}!} .
\]
Using Lemma~\ref{t:summation_2_Bn}, with $s$ replaced by $s t_{j,\theta}^i$, this may be rewritten as 
\begin{align*}
\sum_{m \ge 0} 
\sum_{z \in R_{i,\ve}(m)} 
\omega^x(z) \,
\frac{s^m}{m!}
\prod_{j,\theta} t_{j,\theta}^{j m_{j,\theta}(z)} 
&= \prod_{j,\theta} \exp \left(
\sum_{e \in E(i,j)}  
K_{\ve,\theta}(e) \,
\frac{(2is t_{j,\theta}^i)^{j/e}}{2ij/e} 
\right) \\
&= \exp \left( 
\sum_{j,\theta} \sum_{e \in E(i,j)} 
K_{\ve,\theta}(e) \,
\frac{(2is t_{j,\theta}^i)^{j/e}}{2ij/e} 
\right) ,
\end{align*}
as claimed.
\end{proof}

Finally, let $s_{i,\ve}$ $(i \ge 1, \ve \in \ZZ_2)$ and $t_{j,\theta}$ $(j \ge 1, \theta \in \ZZ_2)$ be two countable sets of indeterminates, denoted succinctly by $\bs$ and $\bt$.
Consider the ring of formal power series $\CC[[\bs,\bt]]$.
For $\blambda \vdash n$ with $a_{i,\ve}$ parts of length $i$ and class $\ve$ $(i \ge 1, \ve \in \ZZ_2)$, $(i a_{i,\ve})_{i,\ve}$ is a decomposition of $n$. Denote 
\[
\bs^{\bc(\blambda)} := \prod_{i,\ve} s_{i,\ve}^{i a_{i,\ve}} .
\]
Similarly, for $\bmu \vdash n$ with $b_{j,\theta}$ parts of length $j$ and class $\theta$ $(j \ge 1, \theta \in \ZZ_2)$, denote 
\[
\bt^{\bc(\bmu)} := \prod_{j,\theta} t_{j,\theta}^{j b_{j,\theta}} .
\]
Recalling the higher Lie characters $\psi_{B_n}^\blambda$ from Definition~\ref{def:HLC_Bn}(c), we can now prove the main result of this section, Theorem~\ref{t:summation_4_Bn}.
It extends Lemma~\ref{t:summation_3_Bn} and computes the sum in Lemma~\ref{t:HLC_values} on the whole set $C_g \cap Z_x$, for arbitrary $g$ and $x$.

\begin{proof}[Proof of Theorem~\ref{t:summation_4_Bn}]
Write any $x \in B_n$ as 
\[
x = \prod_{i,\ve} x_{i,\ve} \,,
\]
where each $x_{i,\ve}$ is a product of $a_{i,\ve}$ disjoint cycles of length $i$ and $\ZZ_2$-class $\ve$.
Then, by Lemma~\ref{t:centralizer_in_Bn},
\[
Z_x \cong \bigtimes_{i,\ve} Z_{x_{i,\ve}}
\]
where
\[
Z_{x_{i,\ve}} \cong G_{i,\ve} \wr S_{a_{i,\ve}}.
\]
By Lemma~\ref{t:summation_3_Bn}, with summation over $m = a_{i,\ve} \ge 0$ and $z = z_{i,\ve} \in G_{i,\ve} \wr S_{a_{i,\ve}}$, we have for each $i \ge 1$ and $\ve \in \ZZ_2$:
\[
\sum_{a_{i,\ve} \ge 0} 
\sum_{z_{i,\ve} \in G_{i,\ve} \wr S_{a_{i,\ve}}} \omega^{x_{i,\ve}}(z_{i,\ve}) \,
\frac{s^{a_{i,\ve}}}{a_{i,\ve}!}
\prod_{j,\theta} t_{j,\theta}^{j m_{j,\theta}(z_{i,\ve})}
= \exp \left( 
\sum_{j,\theta} \sum_{e \in E(i,j)}  
K_{\ve,\theta}(e) \,
\frac{(2is t_{j,\theta}^i)^{j/e}}{2ij/e} 
\right) . 
\]
Replace $s$ by $s_{i,\ve}^i/2i$ and denote
\begin{align*}
\Sigma_{i,\ve}
&:= \sum_{a_{i,\ve} \ge 0} 
\sum_{z_{i,\ve} \in G_{i,\ve} \wr S_{a_{i,\ve}}} \omega^{x_{i,\ve}} (z_{i,\ve}) \,
\frac{s_{i,\ve}^{i a_{i,\ve}}}{a_{i,\ve}! \, (2i)^{a_{i,\ve}}}
\prod_{j,\theta} t_{j,\theta}^{j m_{j,\theta}(z_{i,\ve})} \\
&= \exp \left( 
\sum_{j,\theta} \sum_{e \in E(i,j)} 
K_{\ve,\theta}(e) \,
\frac{(s_{i,\ve} t_{j,\theta})^{ij/e}}{2ij/e} 
\right) . 
\end{align*}
The product of $\Sigma_{i,\ve}$ over all $i$ and $\ve$ is therefore
\begin{align*}
&\ \prod_{i,\ve} \sum_{a_{i,\ve} \ge 0} 
\sum_{z_{i,\ve} \in Z_{x_{i,\ve}}} \omega^{x_{i,\ve}}(z_{i,\ve}) \, 
\frac{s_{i,\ve}^{i a_{i,\ve}}}{a_{i,\ve}! \, (2i)^{a_{i,\ve}}}
\prod_{j,\theta} t_{j,\theta}^{j m_{j,\theta}(z_{i,\ve})} \\
&= \prod_{i,\ve} \Sigma_{i,\ve} 
= \exp \left( 
\sum_{i,\ve} \sum_{j,\theta} \sum_{e \in E(i,j)}  
K_{\ve,\theta}(e) \,
\frac{(s_{i,\ve} t_{j,\theta})^{ij/e}}{2ij/e} 
\right) .
\end{align*}
If $x = \prod_{i,\ve} x_{i,\ve} \in C_\blambda$ then
\[
|C_\blambda| 
= |C_x|
= \frac{|B_n|}{|Z_x|} 
= \frac{|B_n|}{\prod_{i,\ve} a_{i,\ve}!\, (2i)^{a_{i,\ve}}} .
\]
The above equality can thus be written as
\begin{align*}
&\ \sum_{n \ge 0} \frac{1}{|B_n|} 
\sum_{\blambda \vdash n} \sum_{x \in C_\blambda} 
\prod_{i,\ve} \left( 
\sum_{z_{i,\ve} \in Z_{x_{i,\ve}}} 
\omega^{x_{i,\ve}}(z_{i,\ve})  
s_{i,\ve}^{i a_{i,\ve}} \prod_{j,\theta} t_{j,\theta}^{j m_{j,\theta}(z_{i,\ve})} 
\right) \\
&= \exp \left( 
\sum_{i,\ve} \sum_{j,\theta} \sum_{e \in E(i,j)}  
K_{\ve,\theta}(e) \,
\frac{(s_{i,\ve} t_{j,\theta})^{ij/e}}{2ij/e} 
\right) .
\end{align*}
Denote $z := \prod_{i,\ve} z_{i,\ve} \in Z_x$, and note that
\[
\sum_{i,\ve} m_{j,\theta}(z_{i,\ve}) = b_{j,\theta}(z)
\qquad (\forall j \ge 1,\, \theta \in \ZZ_2) .
\]
By the definition of $\omega^x$, the LHS of the equality can thus be written as
\[
\text{LHS} 
= \sum_{n \ge 0} \frac{1}{|B_n|} 
\sum_{\blambda \vdash n} \sum_{x \in C_\blambda} \sum_{z \in Z_x} 
\omega^x(z) \bs^{\bc(x)} \bt^{\bc(z)} ,
\]
where $\bc(x) := (i a_{i,\ve})_{i,\ve}$ is the decomposition of $n$ corresponding to the cycle type of $x$, $\bc(z) := (j b_{j,\theta})_{j,\theta}$ is the decomposition corresponding to the cycle type of $z$, and
\[
\bs^{\bc(x)} := \prod_{i,\ve} s_{i,\ve}^{i a_{i,\ve}}, \quad
\bt^{\bc(z)} := \prod_{j,\theta} t_{j,\theta}^{j b_{j,\theta}} .
\]
In fact, we can rewrite this as
\[
\text{LHS} 
= \sum_{n \ge 0} \frac{1}{|B_n|} 
\sum_{\blambda \vdash n} \sum_{\bmu \vdash n} \sum_{x \in C_\blambda} \sum_{z \in Z_x \cap C_\bmu} 
\omega^x(z) 
\bs^{\bc(\blambda)} \bt^{\bc(\bmu)} ,
\]
since $\bc(x)$ depends only on the conjugacy class of $x$, and may thus be written as $\bc(\blambda)$; and similarly for $\bc(z)$ and $\bc(\bmu)$.

Now, by Lemma~\ref{t:HLC_values}, if $x \in C_\blambda$ then
\[
|C_\blambda| \cdot \sum_{z \in Z_x \cap C_\bmu} \omega^x(z) 
= |C_\bmu| \cdot \psi_{B_n}^x(\bmu) .
\]
Therefore
\begin{align*}
\text{LHS} 
&= \sum_{n \ge 0} \frac{1}{|B_n|} 
\sum_{\blambda \vdash n} \sum_{\bmu \vdash n} 
|C_\bmu| \, \psi_{B_n}^\blambda(\bmu) \,
\bs^{\bc(\blambda)} \bt^{\bc(\bmu)} \\
&= \sum_{n \ge 0}  
\sum_{\blambda \vdash n} \sum_{\bmu \vdash n} 
\psi_{B_n}^\blambda(\bmu) \,
\frac{\bs^{\bc(\blambda)} \bt^{\bc(\bmu)}}{|Z_\bmu|} .
\end{align*}
Regarding the RHS, note that $E(i,j)$ is the set of all common divisors of $i$ and $j$, namely divisors of $\gcd(i,j)$. We can therefore write the above equality as 
\begin{align*}
\sum_{n \ge 0}  
\sum_{\blambda \vdash n} \sum_{\bmu \vdash n} 
\psi_{B_n}^\blambda(\bmu) \,
\frac{\bs^{\bc(\blambda)} \bt^{\bc(\bmu)}}{|Z_\bmu|} 
&= \exp \left( 
\sum_{i,\ve} \sum_{j,\theta} \sum_{e | \gcd(i,j)}  
K_{\ve,\theta}(e) \,
\frac{(s_{i,\ve} t_{j,\theta})^{ij/e}}{2ij/e} 
\right) .
\end{align*}
This completes the proof.
\end{proof}

\section{Proof of Theorem~\ref{t:Bn-higher}}
\label{sec:proof_B}

In this section we use Theorem~\ref{t:gf_root_enumerator_Bn} and Theorem~\ref{t:summation_4_Bn} to prove Theorem~\ref{t:Bn-higher}.
First, some useful notations.

\begin{definition}\label{def:vdash_k} ($\blambda \vdash_k n$) \\
Let $k$ be an integer and $n$ a nonnegative integer. 
Assume that $\blambda = (\lambda^+,\lambda^-) \vdash n$ is a bipartition, with $a_i^\ve$ parts of size $i$ and type $\ve$ $(\forall\, i \ge 1,\, \ve \in \ZZ_2)$.
We write $\blambda \vdash_k n$ if one of the following conditions holds. 
Note that, by Lemma~\ref{t:power_of_one_cycle_Bn} with $j = 1$ and $\theta = +1 \in \ZZ_2$, these conditions are equivalent. 
\begin{enumerate}

\item[(a)]
All (equivalently, one of) the elements $x \in B_n$ of cycle type $\blambda$ satisfy $x^k = 1_{B_n}$.

\item[(b)]
$a^\ve_i = 0$, unless $i$ divides $k$ and $\ve^{k/i} = 1$ in $\ZZ_2$.

\item[(c)]
All part sizes in $\lambda^+$ divide $k$, and all part sizes in $\lambda^-$ divide $k/2$. 
\end{enumerate}
\end{definition}

\begin{definition}\label{def:psi_vdash_k}
Let $k$ be an integer and $n$ a nonnegative integer. Denote
\[
\psi_k^{B_n} := \sum_{\blambda \vdash_k n} \psi_{B_n}^\blambda .
\]      
\end{definition}

Theorem~\ref{t:Bn-higher} can now be restated as the following claim, which we shall prove.

\begin{theorem}\label{t:roots_eq_psi}
For any integer $k$ and nonnegative integer $n$,
\[
\roots_k^{B_n} = \psi_k^{B_n} .
\]
\end{theorem}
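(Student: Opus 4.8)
The plan is to prove $\roots_k^{B_n} = \psi_k^{B_n}$ by comparing the two generating functions already established: Theorem~\ref{t:gf_root_enumerator_Bn} for the left-hand side and Theorem~\ref{t:summation_4_Bn} for the right-hand side. Since two class functions on $B_n$ agree if and only if they agree on every conjugacy class, and the maps $\bmu \mapsto \bt^{\bc(\bmu)}$ are linearly independent monomials, it suffices to show that the generating function $\sum_{n\ge 0}\sum_{\bmu \vdash n} \frac{\psi_k^{B_n}(\bmu)}{|Z_\bmu|} \bt^{\bc(\bmu)}$ equals the generating function $\sum_{n \ge 0}\sum_{\bmu\vdash n}\frac{\roots_k^{B_n}(\bmu)}{|Z_\bmu|}\bt^{\bc(\bmu)}$.

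First I would take the identity of Theorem~\ref{t:summation_4_Bn} and specialize the $\bs$-variables so as to pick out exactly the bipartitions $\blambda$ with $\blambda \vdash_k n$. By Definition~\ref{def:vdash_k}(b), $\blambda \vdash_k n$ means every part of size $i$ and type $\ve$ occurring in $\blambda$ satisfies $i \mid k$ and $\ve^{k/i} = 1$. So the correct specialization is $s_{i,\ve} \mapsto 1$ when $i \mid k$ and $\ve^{k/i}=1$, and $s_{i,\ve}\mapsto 0$ otherwise. Under this substitution the left-hand side of Theorem~\ref{t:summation_4_Bn} collapses to $\sum_{n\ge 0}\sum_{\blambda \vdash_k n}\sum_{\bmu\vdash n}\frac{\psi^\blambda(\bmu)}{|Z_\bmu|}\bt^{\bc(\bmu)} = \sum_{n\ge 0}\sum_{\bmu \vdash n}\frac{\psi_k^{B_n}(\bmu)}{|Z_\bmu|}\bt^{\bc(\bmu)}$, which is exactly the generating function we want for the RHS of the theorem.

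The heart of the proof is then the exponent identity: I must show that, after the above specialization, the exponent $\sum_{i,\ve}\sum_{j,\theta}\sum_{e\mid\gcd(i,j)}\frac{K_{\ve,\theta}(e)(s_{i,\ve}t_{j,\theta})^{ij/e}}{2ij/e}$ equals the exponent appearing in Theorem~\ref{t:gf_root_enumerator_Bn}, namely $\sum_{j\ge 1}\sum_{\theta\in\ZZ_2}\sum_{h\mid k,\ \gcd(h,j)=1}\frac{t_{j,\theta}^{jk/h}}{2jk/h}\roots_h^{\ZZ_2}(\theta)$. After specialization the $i$-sum on the left runs only over $i \mid k$ with $\ve^{k/i}=1$, and for fixed $j,\theta$ the monomial in $t_{j,\theta}$ has exponent $ij/e$. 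The strategy is to collect, for each target exponent $jk/h$ (where $h \mid k$, $\gcd(h,j)=1$), all triples $(i,\ve,e)$ contributing a $t_{j,\theta}^{jk/h}$-term. Writing $i = eh'$ and matching $ij/e = jk/h$ forces $i/e = k/h$; combined with $e \mid \gcd(i,j)$ this amounts to $e \mid \gcd(j, \text{(stuff)})$ and leaves a finite inner sum over $e$ of the coefficients $K_{\ve,\theta}(e)$. The main obstacle is this Möbius-type bookkeeping: one needs $\sum_{e} \sum_{\ve:\, \ve^{k/(i/e\cdot?)}=1} K_{\ve,\theta}(e) = \roots_h^{\ZZ_2}(\theta)$ after the correct reindexing, and verifying it cleanly requires splitting on parities of $h$ and of the relevant $e$, using the explicit formulas for $K_{\ve,\theta}(e)$ from Lemma~\ref{t:summation_1_Bn} together with $\sum_{e\mid m}\mu(e)=[m=1]$ and the identity $\mu(2e) = -\mu(e)$ for odd $e$, $\mu(2e)=0$ for even $e$. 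I expect the $\ve=-1$ contributions (which carry the $w_0$-related subtleties needed later for $D_n$) to be the fiddly part, since there $K_{-,\pm}(e)$ vanishes for even $e$ and the constraint $\ve^{k/i}=1$ interacts with the parity of $k/i$; but once the combinatorial sums are organized by the value of $h = k/(i/e)$ this reduces to the four cases of $\roots_h^{\ZZ_2}(\pm 1)$, completing the comparison and hence the proof of Theorem~\ref{t:roots_eq_psi}.
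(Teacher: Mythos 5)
Your proposal follows essentially the same route as the paper: specialize the $\bs$-variables in Theorem~\ref{t:summation_4_Bn} exactly as you describe, compare exponents with Theorem~\ref{t:gf_root_enumerator_Bn}, reindex via $d:=i/e$ and $h:=k/d$ (your $i=eh'$), and reduce to the M\"obius identity $\sum_{e\mid\gcd(h,j)}\sum_{\ve:\ve^{h/e}=1}K_{\ve,\theta}(e)=\roots_h^{\ZZ_2}(\theta)$ when $\gcd(h,j)=1$ and $0$ otherwise. The only thing you leave unfinished is the final arithmetic verification of that identity (the paper splits on the parity of $h$, not on $\ve$, and the computation is straightforward once organized that way), but your plan is correct and would complete the proof as stated.
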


\begin{proof}
By Theorem~\ref{t:gf_root_enumerator_Bn},
\[
\sum_{n \ge 0} \sum_{\bmu \vdash n} 
\roots_k^{B_n}(\bmu) \,
\frac{\bt^{\bc(\bmu)}}{|Z_\bmu|}
= \exp \left(
\sum_{j,\theta}
\sum_{\substack{h|k \\ \gcd(h,j) = 1}} 
\roots_h^{\ZZ_2}(\theta) \,
\frac{t_{j,\theta}^{j k/h}}{2 j k/h}         \right).
\]
On the other hand, by Theorem~\ref{t:summation_4_Bn},
\[
\sum_{n \ge 0}  
\sum_{\blambda \vdash n} \sum_{\bmu \vdash n} 
\psi_{B_n}^\blambda(\bmu) \,
\frac{\bs^{\bc(\blambda)} \bt^{\bc(\bmu)}}{|Z_\bmu|}
= \exp \left( 
\sum_{i,\ve} \sum_{j,\theta} \sum_{e | \gcd(i,j)}  
K_{\ve,\theta}(e) \,
\frac{(s_{i,\ve} t_{j,\theta})^{ij/e}}{2ij/e} 
\right) ,
\]
where 
\[
K_{\ve,\theta}(e)
:= \ve \theta \cdot \mu(2e)
+ \frac{(1+\ve)(1+\theta)}{2} \cdot \mu(e).
\] 
Letting
\[
s_{i,\ve} :=
\begin{cases}
1, &\text{if } i|k \text{ and } \ve^{k/i} = 1; \\
0, &\text{otherwise,}
\end{cases}
\]
we obtain, by Definition~\ref{def:psi_vdash_k},
\begin{align*}
\sum_{n \ge 0} \sum_{\bmu \vdash n} 
\psi_k^{B_n}(\bmu) \,
\frac{\bt^{\bc(\bmu)}}{|Z_\bmu|}
&= 
\sum_{n \ge 0} \sum_{\bmu \vdash n} 
\sum_{\blambda \vdash_k n} 
\psi_{B_n}^\blambda(\bmu) \,
\frac{\bt^{\bc(\bmu)}}{|Z_\bmu|} \\
&= 
\exp \left( \sum_{j,\theta} 
\sum_{\substack{i \\ i|k}} 
\sum_{\substack{\ve \\ \ve^{k/i} = 1}} 
\sum_{\substack{e \\ e | \gcd(i,j)}}  
K_{\ve,\theta}(e) \,
\frac{t_{j,\theta}^{ij/e}}{2ij/e} \right) .
\end{align*}
Fixing the integer $k$, it follows that the claim 
\[
\roots_k^{B_n} = \psi_k^{B_n}
\qquad (\forall n \ge 0)
\]
is equivalent to the claim that, for any $j \ge 1$ and $\theta \in \ZZ_2$,
\begin{equation}\label{eq:roots_hLc_1_Bn}
\sum_{\substack{h \\ h|k \\ \gcd(h,j) = 1}} 
\roots_h^{\ZZ_2}(\theta) \, 
\frac{t_{j,\theta}^{j k/h}}{2 j k/h}         = \sum_{\substack{i \\ i|k}} 
\sum_{\substack{\ve \\ \ve^{k/i} = 1}} 
\sum_{\substack{e \\ e | \gcd(i,j)}} 
K_{\ve,\theta}(e) \,
\frac{t_{j,\theta}^{ij/e}}{2ij/e} .
\end{equation}
Letting $d := i/e$ and then $h := k/d$ on the RHS of~\eqref{eq:roots_hLc_1_Bn} yields
\begin{align*}
\text{RHS}
&= \sum_{\substack{i \\ i|k}} 
\sum_{\substack{e \\ e|i \\ e|j}}  
\sum_{\substack{\ve \\ \ve^{k/i} = 1}}
K_{\ve,\theta}(e) \,
\frac{t_{j,\theta}^{ij/e}}{2ij/e} \\
&= \sum_{\substack{d \\ d|k}} 
\sum_{\substack{e \\ e|(k/d) \\ e|j}}  
\sum_{\substack{\ve \\ \ve^{k/(de)} = 1}} 
K_{\ve,\theta}(e) \,
\frac{t_{j,\theta}^{j d}}{2j d} \\
&= \sum_{\substack{h \\ h|k}} 
\sum_{\substack{e \\ e|h \\ e|j}}  
\sum_{\substack{\ve \\ \ve^{h/e} = 1}} 
K_{\ve,\theta}(e) \,
\frac{t_{j,\theta}^{j k/h}}{2j k/h} .
\end{align*}
Comparing powers of $t_{j,\theta}$ in~\eqref{eq:roots_hLc_1_Bn}, it follows that we need to prove that, for any $j \ge 1$, $\theta \in \ZZ_2$ and a divisor $h$ of $k$,
\begin{equation}\label{eq:roots_hLc_2_Bn}
\sum_{\substack{e \\ e|\gcd(h,j)}} 
\sum_{\substack{\ve \\ \ve^{h/e} = 1}} 
K_{\ve,\theta}(e) 
= \begin{cases}
	\roots_h^{\ZZ_2}(\theta), &\text{if } \gcd(h,j) = 1; \\
	0, &\text{otherwise.}
\end{cases}
\end{equation}
Indeed, by 
the explicit formula for $K_{\ve,\theta}(e)$,
\[
\sum_{\substack{\ve \\ \ve^{h/e} = 1}} 
K_{\ve,\theta}(e) 
= \begin{cases}
\theta \cdot \mu(2e) + (1 + \theta) \cdot \mu(e), &\text{if } h/e \text{ is odd;} \\
(1 + \theta) \cdot \mu(e), &\text{if } h/e \text{ is even.}
\end{cases}
\]
By a well-known property of the M\"obius function,
\[
\sum_{\substack{e \\ e|\gcd(h,j)}} \mu(e)
= \begin{cases}
	1, &\text{if } \gcd(h,j) = 1; \\
	0, &\text{otherwise.} 
\end{cases}
\]
Since 
\[
\mu(2e)
= \begin{cases}
	-\mu(e), &\text{if $e$ is odd;} \\
	0, &\text{if $e$ is even,} 
\end{cases}
\]
it follows that
\begin{align*}
	\sum_{\substack{e \\ e|\gcd(h,j) \\ h/e \text{ odd}}} \mu(2e)
	&= -\sum_{\substack{e \text{ odd} \\ e|\gcd(h,j) \\ h/e \text{ odd}}} \mu(e)
	= \begin{cases}
		-\sum_{e|\gcd(h,j)} \mu(e), &\text{if $h$ is odd;} \\
		0, &\text{if $h$ is even} 
	\end{cases} \\
	&= \begin{cases}
		-1, &\text{if } \gcd(h,j) = 1 \text{ and } h \text{ is odd;} \\
		0, &\text{otherwise.} 
	\end{cases}
\end{align*}
Thus
\[
\sum_{\substack{e \\ e|\gcd(h,j)}} 
\sum_{\substack{\ve \\ \ve^{h/e} = 1}} 
K_{\ve,\theta}(e) 
= \begin{cases}
	1, &\text{if } \gcd(h,j) = 1 \text{ and } h \text{ is odd;} \\
	1 + \theta, &\text{if } \gcd(h,j) = 1 \text{ and } h \text{ is even;} \\
	0, &\text{otherwise.}
\end{cases}
	\]
	If $\gcd(h,j) \ne 1$ then the RHS here is zero, and if $\gcd(h,j) = 1$ then the RHS is $\roots_h^{\ZZ_2}(\theta)$.
	This proves~\eqref{eq:roots_hLc_2_Bn} and completes the proof of the theorem.
\end{proof}

\section{Root enumerators of type $D$}
\label{sec:D_proper}

In this section we prove   
Theorems~\ref{t:Dn-proper}, \ref{t:Dn-higher} and~\ref{t:Dn-higher_odd}. 
In Subsection~\ref{subsec:sroots} we 
define signed root enumerators of type $B$ and state the main result of this section, 
Theorem~\ref{t:sroots_eq_spsi}, which is a signed analogue  of Theorem~\ref{t:roots_eq_psi}.
Its proof uses Theorem~\ref{t:gf_sroot_enumerator_Bn}, which provides a generating function for signed root enumerators in $B_n$.
Theorem~\ref{t:gf_sroot_enumerator_Bn} is stated and proved in Subsection~\ref{subsec:gf_signed_roots},
and is used in Subsection~\ref{subsec:sroots_and_hLc}, together with Theorem~\ref{t:summation_4_Bn}, 
to prove Theorem~\ref{t:sroots_eq_spsi}. 
Then, in Subsection~\ref{subsec:D_proper}, we deduce Theorems~\ref{t:Dn-higher} and~\ref{t:Dn-proper}.
Finally, in Subsection~\ref{subsec:HLC_Dn}, we use Theorem~\ref{t:Dn-higher} to prove Theorem~\ref{t:Dn-higher_odd}. 

\subsection{Signed root enumerators of type $B$}
\label{subsec:sroots}

Let us first recall the linear character $\chi$ on $B_n$.

\begin{definition}\label{def:sign_char_Bn}
	Let $\chi$ be the linear character on $B_n = \ZZ_2 \wr S_n$ defined as follows:
	$\chi$ is the identity map (i.e., the sign character) on $\ZZ_2 = \{+1,-1\}$, and is trivial on the wreathing group $S_n$.
\end{definition}

Equivalently, for an element $x \in B_n$, $\chi(x)$ is the product of the $\ZZ_2$-classes of the cycles of $x$, which is also the product of the signs of all the entries in the window notation $[x(1),\ldots,x(n)]$.

\begin{remark}\label{rem:D} 
	In the sequel we will consider the Weyl group $D_n$ as a subgroup of $B_n$, as follows. 
	\[
	D_n 
	= \ker(\chi) 
	= \{x \in B_n \,:\, \chi(x) = 1\}. 
	\]
\end{remark}


Recall now that, for an integer $k$ and a nonnegative integer $n$, the $k$-th root enumerator in $B_n$ is defined by
\[
\roots_k^{B_n}(y) 
:= |\{x \in B_n \,:\, x^k = y\}| 
= \sum_{\substack{x \in B_n \\ x^k = y}} 1 
\qquad (y \in B_n) .
\]

\begin{definition}\label{def:sroots_Bn}
	Let $k$ be an integer and $n$ a nonnegative integer.
	The {\em signed $k$-th root enumerator} in $B_n$ is defined by 
	\[
	\sroots_k^{B_n}(y) 
	:= \sum_{\substack{x \in B_n \\ x^k = y}} \chi(x)
	\qquad (y \in B_n) .
	\]
	We shall also use the signed $k$-th root enumerator in $\ZZ_2 = \{+1,-1\}$,
	\[
	\sroots_k^{\ZZ_2}(\theta) 
	:= \sum_{\substack{\ve \in \ZZ_2 \\ \ve^k = \theta}} \chi(\ve)
	= \sum_{\substack{\ve \in \ZZ_2 \\ \ve^k = \theta}} \ve
	\qquad (\theta \in \ZZ_2) .
	\]    
\end{definition}

In order to state our main result, Theorem~\ref{t:sroots_eq_spsi}, we need two more definitions.

\begin{definition}\label{def:vdash_sk} ($\blambda \vdash_\sk n$) \\
	Let $k$ be an integer and $n$ a nonnegative integer. 
	Assume that $\blambda = (\lambda^+,\lambda^-) \vdash n$ is a bipartition, with $a_i^\ve$ parts of size $i$ and type $\ve$ $(\forall\, i \ge 1,\, \ve \in \ZZ_2)$.
	Let $w_0 := [-1,-2,\ldots,-n]$, the unique element of $B_n$ with all cycles of length $1$ and $\ZZ_2$-class $-1$.
	This is a central involution and the longest element in $B_n$.
	We write $\blambda \vdash_\sk n$ if one of the following conditions holds. 
	Note that, by Lemma~\ref{t:power_of_one_cycle_Bn} with $j = 1$ and $\theta = -1 \in \ZZ_2$, these conditions are equivalent. 
	\begin{enumerate}
		
		\item[(a)]
		All (equivalently, one of) the elements $x \in B_n$ of cycle type $\blambda$ satisfy $x^k = w_0$.
		
		\item[(b)]
		$a^\ve_i = 0$, unless $i$ divides $k$ and $\ve^{k/i} = -1$ in $\ZZ_2$.
		
		\item[(c)]
		$\lambda^+ = \varnothing$, and all part sizes in $\lambda^-$ divide $k$ but do not divide $k/2$. 
	\end{enumerate}
\end{definition}

\begin{definition}\label{def:psi_vdash_sk}
	Let $k$ be an integer and $n$ a nonnegative integer. Denote
	\[
	\spsi_k^{B_n} := \sum_{\blambda \vdash_\sk n} \psi_{B_n}^\blambda .
	\]      
\end{definition}

The main result of this section is the following.

\begin{theorem}\label{t:sroots_eq_spsi}
	For any integer $k$ and nonnegative integer $n$,
	\[
	\sroots_k^{B_n} = \spsi_k^{B_n} .
	\]
\end{theorem}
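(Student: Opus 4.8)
The plan is to mimic exactly the proof of Theorem~\ref{t:roots_eq_psi}, tracking an extra sign. First I would establish the signed analogue of Theorem~\ref{t:gf_root_enumerator_Bn}, namely Theorem~\ref{t:gf_sroot_enumerator_Bn}, by revisiting the proof of Lemma~\ref{t:root_enumerator_similar_cycles_Bn}. The only change needed there is to insert the factor $\chi(x)$ into the sum; by Definition~\ref{def:sign_char_Bn} this factor equals the product of the $\ZZ_2$-classes of the cycles of $x$, so it contributes $\prod_{i,\ve}\ve^{m_{i,\ve}}$. In the generating-function bookkeeping this simply replaces each term $r_{jk/h,\ve}^{jk/h}$ by $\ve \cdot r_{jk/h,\ve}^{jk/h}$, and after setting $r_{\ell,\ve}=1$ the inner sum $\sum_{\ve\in R_{h,\theta}}1 = \roots_h^{\ZZ_2}(\theta)$ is replaced by $\sum_{\ve\in R_{h,\theta}}\ve = \sroots_h^{\ZZ_2}(\theta)$. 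Thus the signed generating function is obtained from Theorem~\ref{t:gf_root_enumerator_Bn} by the single substitution $\roots_h^{\ZZ_2}(\theta)\mapsto\sroots_h^{\ZZ_2}(\theta)$, i.e.
\[
    \sum_{n\ge 0}\sum_{\bmu\vdash n}\frac{\sroots_k^{B_n}(\bmu)}{|Z_\bmu|}\bt^{\bc(\bmu)}
    = \exp\left(\sum_{j,\theta}\sum_{\substack{h|k\\ \gcd(h,j)=1}}\frac{t_{j,\theta}^{jk/h}}{2jk/h}\cdot\sroots_h^{\ZZ_2}(\theta)\right).
\]

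Next I would combine this with Theorem~\ref{t:summation_4_Bn}, exactly as in the proof of Theorem~\ref{t:roots_eq_psi}, but with a different specialization of the indeterminates $s_{i,\ve}$. Definition~\ref{def:vdash_sk}(b) tells us the relevant bipartitions are those with $a_i^\ve=0$ unless $i|k$ and $\ve^{k/i}=-1$, so I would set
\[
    s_{i,\ve} := \begin{cases} 1, & i|k \text{ and } \ve^{k/i}=-1;\\ 0, &\text{otherwise.}\end{cases}
\]
Following the same manipulation (write $d:=i/e$, then $h:=k/d$, and compare powers of $t_{j,\theta}$), the identity $\sroots_k^{B_n}=\spsi_k^{B_n}$ reduces to the purely arithmetic statement that, for each $j\ge 1$, $\theta\in\ZZ_2$ and divisor $h$ of $k$,
\[
    \sum_{e|\gcd(h,j)}\ \sum_{\ve:\,\ve^{h/e}=-1} K_{\ve,\theta}(e)
    = \begin{cases} \sroots_h^{\ZZ_2}(\theta), &\text{if }\gcd(h,j)=1;\\ 0, &\text{otherwise.}\end{cases}
\]
The difference from~\eqref{eq:roots_hLc_2_Bn} is that the inner sum is now over $\ve$ with $\ve^{h/e}=-1$ rather than $\ve^{h/e}=1$; the former is nonempty (and then a singleton $\{\ve\}$ with $\ve=-1$ if $h/e$ is odd, empty if $h/e$ is even).

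The remaining work is the Möbius computation. For $h/e$ odd the inner sum is $K_{-1,\theta}(e) = -\theta\mu(2e)$, and for $h/e$ even it is empty, contributing $0$. So the left side equals $-\theta\sum_{e|\gcd(h,j),\ h/e\text{ odd}}\mu(2e)$, and using $\mu(2e)=-\mu(e)$ for $e$ odd, $\mu(2e)=0$ for $e$ even, together with the property $\sum_{e|\gcd(h,j)}\mu(e)=[\gcd(h,j)=1]$, this collapses (as in the proof of Theorem~\ref{t:roots_eq_psi}, the computation of $\sum_{e|\gcd(h,j),\,h/e\text{ odd}}\mu(2e)$) to $\theta$ when $\gcd(h,j)=1$ and $h$ is odd, and to $0$ otherwise. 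On the other side, $\sroots_h^{\ZZ_2}(\theta)=\sum_{\ve:\,\ve^h=\theta}\ve$ equals $\theta$ when $h$ is odd (the unique square-root is $\ve=\theta$ itself, since $\ve^h=\ve$), and for $h$ even it is $1+(-1)=0$ if $\theta=1$ and $0$ if $\theta=-1$ (no solutions), i.e. always $0$. The two sides match, proving the reduced identity and hence the theorem. I expect the main obstacle to be purely bookkeeping: making sure the extra sign $\chi(x)$ is inserted at the right place in the derivation of Lemma~\ref{t:root_enumerator_similar_cycles_Bn} and propagates correctly through the exponential, and being careful with the parity case analysis so that the empty-sum cases (no $\ve$ with $\ve^{h/e}=-1$) are handled on both sides.
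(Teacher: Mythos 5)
Your proposal is correct and follows essentially the same route as the paper: establish the signed generating-function analogue (the paper's Theorem~\ref{t:gf_sroot_enumerator_Bn}) by inserting $\chi(x)=\prod_{i,\ve}\ve^{m_{i,\ve}}$ into the cycle-type bookkeeping of Lemma~\ref{t:root_enumerator_similar_cycles_Bn}, then specialize Theorem~\ref{t:summation_4_Bn} at $s_{i,\ve}=1$ precisely when $i\mid k$ and $\ve^{k/i}=-1$, and match coefficients via the M\"obius identity. Your arithmetic verification (that both sides reduce to $\theta$ when $\gcd(h,j)=1$ and $h$ is odd, and to $0$ otherwise) agrees with the paper's case analysis.
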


In Subsection~\ref{subsec:gf_signed_roots} we compute a generating function (Theorem~\ref{t:gf_sroot_enumerator_Bn}) for signed root enumerators in $B_n$. 
In Subsection~\ref{subsec:sroots_and_hLc} we combine this result with Theorem~\ref{t:summation_4_Bn} to deduce Theorem~\ref{t:sroots_eq_spsi}.
In Subsection~\ref{subsec:D_proper} we use Theorems~\ref{t:roots_eq_psi} and~\ref{t:sroots_eq_spsi} to deduce Theorems~\ref{t:Dn-higher} and~\ref{t:Dn-proper}.
Finally, in Subsection~\ref{subsec:HLC_Dn}, we use Theorem~\ref{t:Dn-higher} to prove Theorem~\ref{t:Dn-higher_odd}.

\subsection{Generating function for signed root enumerators}
\label{subsec:gf_signed_roots}

In this subsection we state and prove a generating function (Theorem~\ref{t:gf_sroot_enumerator_Bn}) for the values of the signed $k$-th root enumerator $\sroots_k^{B_n}$ on arbitrary elements of $B_n$, in terms of signed $h$-th root enumerators $\sroots_h^{\ZZ_2}$ for various divisors $h$ of $k$.

Let $\bmu = (\mu^+,\mu^-)$ be a bipartition of $n$. For each integer $j \ge 1$ and sign $\theta \in \ZZ_2 = \{+1,-1\}$, let $b_{j,\theta}$ be the number of parts of size $j$ in the partition $\mu^\theta$. Thus
\[
\sum_{j,\theta} j b_{j,\theta} = n.
\]
Let $\bt = (t_{j,\theta})_{j \ge 1, \theta \in \ZZ_2}$ be a countable set of indeterminates, and denote $\bt^{\bc(\bmu)} := \prod_{j,\theta} t_{j,\theta}^{j b_{j,\theta}}$.
Consider the ring $\CC[[\bt]]$ of formal power series in these indeterminates.
The main result of this subsection is the following signed analogue of Theorem~\ref{t:gf_root_enumerator_Bn}.

\begin{theorem}\label{t:gf_sroot_enumerator_Bn}
	For any integer $k$, the generating function for the signed $k$-th root enumerator in $B_n$ is
	\[
	\sum_{n \ge 0} 
	\sum_{\bmu \vdash n} 
	\sroots_k^{B_n}(\bmu) \,
	\frac{\bt^{\bc(\bmu)}}{|Z_\bmu|}
	= \exp \left(
	\sum_{j,\theta}
	\sum_{\substack{h|k \\ \gcd(h,j) = 1}} 
	\sroots_h^{\ZZ_2}(\theta) \,
	\frac{t_{j,\theta}^{j k/h}}{2 j k/h}         \right) .
	\]
\end{theorem}

The rest of this subsection is devoted to the proof of Theorem~\ref{t:gf_sroot_enumerator_Bn}.

Consider two countable sets of indeterminates, $\br = (r_{\ell,\ve})_{\ell \ge 1,\ve \in \ZZ_2}$ and $\bt = (t_{j,\theta})_{j \ge 1,\theta \in \ZZ_2}$.
We shall work in the ring $\CC[[\br,\bt]]$ of formal power series in these indeterminates, over the field $\CC$ of complex numbers.
Our first result is a signed analogue of Lemma~\ref{t:root_enumerator_similar_cycles_Bn}.
It gives a generating function for the sum, refined by cycle type, of $\chi$-values of the $k$-th roots of an element of $B_n$ which is a product of disjoint cycles of a fixed type, namely length $j \ge 1$ and $\ZZ_2$-class $\theta$.

\begin{lemma}\label{t:sroot_enumerator_similar_cycles_Bn} 
	Fix $j \ge 1$ and $\theta \in \ZZ_2$.
	For each $b \ge 0$, pick an element $y \in B_{j b}$ which is a product of $b$ disjoint cycles, each of length $j$ and $\ZZ_2$-class $\theta$. 
	The generating function for the sum of $\chi$-values of elements $x \in B_{j b}$ satisfying $x^k = y$ is
	\begin{align*}
		\sum_{b \ge 0} \frac{t_{j,\theta}^{j b}}{b! (2j)^b} 
		\sum_{\blambda \vdash j b} \sum_{\substack{x \in C_\blambda \\ x^k = y}} \chi(x)
		\prod_{\ell,\ve} r_{\ell,\ve}^{\ell m_{\ell,\ve}}
		&= \exp \left( 
		\sum_{\substack{h | k \\ \gcd(h,j) = 1}} 
		\frac{t_{j,\theta}^{j k/h}}{2 j k/h} \sum_{\ve \in R_{h,\theta}} \ve r_{j k/h,\ve}^{j k/h} 
		\right) ,
	\end{align*}
	where $R_{h,\theta} := \{\ve \in \ZZ_2 \,:\, \ve^h = \theta\}$.
\end{lemma}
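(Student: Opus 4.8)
The plan is to imitate the proof of Lemma~\ref{t:root_enumerator_similar_cycles_Bn} line by line, inserting a factor of $\chi(x)$ into every sum and tracking how it factors through the cycle structure. The crucial observation is that $\chi$ is multiplicative on disjoint cycles: if $x$ decomposes into cycles with $\ZZ_2$-classes $\ve_1,\ldots,\ve_r$, then $\chi(x) = \prod_s \ve_s$. Combined with Lemma~\ref{t:power_of_one_cycle_Bn}, this means that when we group the $k$-th roots $x$ of $y$ by the cycle length $\ell = dj$ and $\ZZ_2$-class $\ve$ of each cycle (with $d \mid k$, $\gcd(k/d,j)=1$, $\ve^{k/d} = \theta$), every cycle of that type contributes a factor of $\ve$ to $\chi(x)$. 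So the only change from the unsigned proof is that each monomial $r_{\ell,\ve}^{\ell m_{\ell,\ve}}$ acquires a companion factor $\ve^{m_{\ell,\ve}}$; equivalently, we replace $r_{\ell,\ve}$ throughout by $\ve^{1/\ell} r_{\ell,\ve}$ — or, more cleanly, we carry the factor $\ve$ alongside $r_{\ell,\ve}^{\ell}$ as a single bookkeeping symbol $\ve\, r_{\ell,\ve}^{\ell}$.

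Concretely, I would reproduce the computation of $|C_x|/|C_y|$ from Lemma~\ref{t:root_enumerator_similar_cycles_Bn} verbatim (the conjugacy class sizes do not involve $\chi$ at all), obtaining that the number of $x'\in C_x$ with $(x')^k = y$ equals $b!(2j)^b \prod_i (2\ell_i)^{-m_i}\prod_{\ve\in R_{k/d_i,\theta}} (m_{i,\ve}!)^{-1}$. Every such $x'$ has the same $\chi$-value, namely $\prod_i \prod_{\ve} \ve^{m_{i,\ve}}$, since $\chi$ is a class function. Multiplying by this, and by the monomial $\prod_{i,\ve} r_{\ell_i,\ve}^{\ell_i m_{i,\ve}}$, then applying the multinomial identity with $x_\ve = \ve\, r_{\ell_i,\ve}^{\ell_i}$ (instead of $x_\ve = r_{\ell_i,\ve}^{\ell_i}$) in place of the unsigned version, collapses the inner sums to $\big(\sum_{\ve\in R_{k/d_i,\theta}} \ve\, r_{\ell_i,\ve}^{\ell_i}\big)^{m_i}$. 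The rest of the argument — summing over $(m_1,\ldots,m_q)\in M_k(j,b)$, extracting the coefficient of $t_{j,\theta}^{jb}$, recognizing the product of exponentials, and re-indexing by $h_i = k/d_i$ — is identical to the unsigned case, and yields
\[
    \sum_{b\ge 0}\frac{t_{j,\theta}^{jb}}{b!(2j)^b}\sum_{\blambda\vdash jb}\sum_{\substack{x\in C_\blambda\\ x^k=y}}\chi(x)\prod_{\ell,\ve} r_{\ell,\ve}^{\ell m_{\ell,\ve}}
    = \prod_{\substack{h\mid k\\ \gcd(h,j)=1}}\exp\!\left(\frac{t_{j,\theta}^{jk/h}}{2jk/h}\sum_{\ve\in R_{h,\theta}}\ve\, r_{jk/h,\ve}^{jk/h}\right),
\]
which is the claimed identity once the product of exponentials is merged into a single $\exp$ of a sum.

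I do not expect a genuine obstacle here: the entire content is the remark that $\chi$ is a homomorphism, so it distributes over the disjoint-cycle decomposition exactly the way the generating-function machinery is already set up to handle. The one place demanding minor care is making sure the substitution $x_\ve = \ve\, r_{\ell_i,\ve}^{\ell_i}$ is legitimate in the multinomial identity — it is, since that identity is a formal polynomial identity valid for arbitrary commuting symbols, and $\ve\in\{+1,-1\}$ is just a scalar coefficient. A secondary point to state explicitly is that $R_{h,\theta}$ may be empty (when no $\ve\in\ZZ_2$ satisfies $\ve^h=\theta$, i.e. $h$ even and $\theta = -1$), in which case the corresponding inner sum is empty and the exponential factor is $1$, consistent with there being no $k$-th roots of that cycle type; this is the same convention already used in Lemma~\ref{t:root_enumerator_similar_cycles_Bn}.
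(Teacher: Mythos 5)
Your proposal is correct and follows the paper's own proof essentially verbatim: both arguments reuse the conjugacy-class-size computation from the unsigned Lemma~\ref{t:root_enumerator_similar_cycles_Bn}, observe that $\chi(x)=\prod_i\prod_{\ve}\ve^{m_{i,\ve}}$ by multiplicativity of $\chi$ over disjoint cycles, and then run the identical multinomial-identity argument with $x_\ve = \ve\, r_{\ell_i,\ve}^{\ell_i}$ in place of $r_{\ell_i,\ve}^{\ell_i}$. No substantive difference in approach.
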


\begin{proof}
	As in the proof of Lemma~\ref{t:root_enumerator_similar_cycles_Bn}, denote
	\[
	D_k(j) := 
	\{d \,:\, d \ge 1, \,\, d|k, \,\, \gcd(k/d,j) = 1\} ,
	\]
	and assume that $d_1, \ldots, d_q$ are the distinct elements of $D_k(j)$.
	For each $1 \le i \le q$, let $m_i$ be the number of cycles of $x$ of length $\ell_i = d_i j$. 
	The total number of cycles of $y$, each of length $j$, is then $m_1 d_1 + \ldots + m_q d_q = b$.
	It follows that the cycle-length distributions of elements $x \in B_{j b}$ satisfying $x^k = y$ correspond to elements of the set
	\[
	M_k(j,b) := 
	\{(m_1, \ldots, m_q) \in \ZZ_{\ge 0}^q \,:\, m_1 d_1 + \ldots + m_q d_q = b\} .
	\]
	The $\ZZ_2$-class $\ve$ of a cycle of length $d_i j$ must satisfy $\ve^{k/d_i} = \theta$.
	For each integer $h$, let
	\[
	R_{h,\theta} 
	:= \{\ve \in \ZZ_2 \,:\, \ve^h = \theta\},
	\]
	so that $|R_{h,\theta}| = \roots^{\ZZ_2}_h(\theta)$.
	
	The size of the conjugacy class of $y$, as computed in the proof of Lemma~\ref{t:root_enumerator_similar_cycles_Bn}, is
	\[
	|C_y|
	= \frac{(j b)! \cdot (2^{j-1})^b}{b! \cdot j^b} 
	= \frac{(j b)! \cdot 2^{j b}}{b! \cdot (2j)^b} .
	\]
	Assume now that $x$ has $m_{i,\ve}$ cycles of length $\ell_i = d_i j$ and $\ZZ_2$ class $\ve$ $(1 \le i \le q,\, \ve \in R_{k/d_i,\theta})$,
	so that $\sum_\ve m_{i,\ve} = m_i$ $(1 \le i \le q)$.
	A similar computation shows that the size of the conjugacy class of $x$ is
	\[
	|C_x|
	= \frac{(j b)! \cdot 
		\prod_{i=1}^{q} \prod_{\ve \in R_{k/d_i,\Theta}} (2^{\ell_i - 1})^{m_{i,\ve}}} 
	{\prod_{i=1}^{q} \prod_{\ve \in R_{k/d_i,\theta}} m_{i,\ve}! \cdot \ell_i^{m_{i,\ve}}} 
	= (j b)! \cdot 
	\prod_{i=1}^{q} \left( \frac{2^{\ell_i}}{2 \ell_i} \right)^{m_i}
	\prod_{\ve \in R_{k/d_i,\theta}} \frac{1}{m_{i,\ve}!} .
	\]
	Dividing this by the size of the conjugacy class of $y$ (and using $\sum_i m_i d_i = b$ which is equivalent to $\sum_i m_i \ell_i = j b$) yields
	\[
	\frac{|C_x|}{|C_y|}
	= b! (2j)^b \cdot 
	\prod_{i=1}^{q} \frac{1}{(2 \ell_i)^{m_i}} 
	\prod_{\ve \in R_{k/d_i,\theta}} \frac{1}{m_{i,\ve}!} .
	\]
	This is the number of elements $x' \in C_x$ satisfying $(x')^k = y$.
	Each of these elements has
	\begin{equation}\label{eq:chi_1}
		\chi(x') = \chi(x) 
		= \prod_{i=1}^{q} 
		\prod_{\ve \in R_{k/d_i,\theta}}
		\ve^{m_{i,\ve}} .
	\end{equation}
	In order to record the cycle types of $x$ and $y$, let us introduce two countable sets of indeterminates, $(r_{\ell,\ve})_{\ell \ge 1,\ve \in \ZZ_2}$ and $(t_{j,\theta})_{j \ge 1,\theta \in \ZZ_2}$. Multiply the number of $x' \in C_x$ satisfying $(x')^k = y$ by $\chi(x')$ and by the monomial $\prod_{i,\ve} r_{\ell_i,\ve}^{\ell_i m_{i,\ve}}$ to get
	\begin{equation}\label{eq:chi_2}
		b! \left( 2j \right)^b \cdot 
		\prod_{i=1}^{q} \frac{1}{(2 \ell_i)^{m_i}} 
		\prod_{\ve \in R_{k/d_i,\theta}} \frac{(\ve r_{\ell_i,\ve}^{\ell_i})^{m_{i,\ve}}}{m_{i,\ve}!} .
	\end{equation}
	Sum this, for each $1 \le i \le q$, over all the decompositions of $m_i$ into a sum of nonnegative integers $m_{i,\ve}$ $(\ve \in R_{k/d_i,\theta})$, and use the multinomial identity
	\[
	\left( \sum_{\ve \in R} x_\ve \right)^m
	= m! \cdot 
	\sum_{\substack{m_\ve \ge 0 \, (\ve \in R) \\ \sum_{\ve} m_\ve = m}} \,\,
	\prod_{\ve \in R} \frac{x_\ve^{m_\ve}}{m_\ve!} 
	\]
	for $x_\ve = \ve r_{\ell_i,\ve}^{\ell_i}$, $m_\ve = m_{i,\ve}$, $m = m_i$ and $R = R_{k/d_i,\theta}$,
	to get
	\begin{align*}
		&\ b! \left( 2j \right)^b \cdot 
		\prod_{i=1}^{q} \frac{1}{(2 \ell_i)^{m_i}} 
		\sum_{\substack{m_{i,\ve} \ge 0 \, (\ve \in R_{k/d_i,\theta}) \\ \sum_{\ve} m_{i,\ve} = m_i}}
		\prod_{\ve \in R_{k/d_i,\theta}} \frac{(\ve r_{\ell_i,\ve}^{\ell_i})^{m_{i,\ve}}}{m_{i,\ve}!} \\
		&= b! \left( 2j \right)^b \cdot 
		\prod_{i=1}^{q} \frac{1}{(2 \ell_i)^{m_i} m_i!} 
		\left( \sum_{\ve \in R_{k/d_i,\theta}} \ve r_{\ell_i,\ve}^{\ell_i} \right)^{m_i} .
	\end{align*}
	Now sum this over all $(m_1,\ldots,m_q) \in M_k(j,b)$, namely $m_i \ge 0$ such that $m_1 \ell_1 + \ldots + m_q \ell_q = j b$, to get
	\[
	b! \left( 2j \right)^b \cdot 
	\sum_{(m_1,\ldots,m_q) \in M_k(j,b)} 
	\prod_{i=1}^{q} \frac{1}{m_i!} 
	\left( \frac{1}{2 \ell_i} \sum_{\ve \in R_{k/d_i,\theta}} \ve r_{\ell_i,\ve}^{\ell_i} \right)^{m_i} .
	\]
	Dividing by $b! (2j)^b$, the above sum is the coefficient of $t_{j,\theta}^{j b}$ in
	\[
	\prod_{i=1}^{q} \sum_{m_i \ge 0}
	\frac{1}{m_i!} 
	\left( \frac{t_{j,\theta}^{\ell_i}}{2 \ell_i} \sum_{\ve \in R_{k/d_i,\theta}} \ve r_{\ell_i,\ve}^{\ell_i} \right)^{m_i} \\
	= \prod_{i=1}^{q} 
	\exp \left( \frac{t_{j,\theta}^{\ell_i}}{2 \ell_i} \sum_{\ve \in R_{k/d_i,\theta}} \ve r_{\ell_i,\ve}^{\ell_i} \right) .
	\]
	By the above computations, this is (up to a factor $b! (2j)^b$) the generating function for $\chi(x)$ over all the elements $x \in B_{j b}$ satisfying $x^k = y$ for a specific element $y$, which has $b$ cycles of length $j$ and $\ZZ_2$-class $\theta$.
	The solutions are counted by conjugacy class: If $x$ has $m_{\ell,\ve}$ cycles of length $\ell$ and $\ZZ_2$-class $\ve$ $(\ell \ge 1, \ve \in \ZZ_2)$, then it contributes $\chi(x)$ to the coefficient of $t_{j,\theta}^{j b} \prod_{\ell,\ve} r_{\ell,\ve}^{\ell m_{\ell,\ve}}$.
	The generating function includes summation over $b \ge 0$. Therefore, recovering the factor $b! (2j)^b$,
	\[
	\sum_{b \ge 0} \frac{t_{j,\theta}^{j b}}{b! (2j)^b} 
	\sum_{\blambda \vdash j b} \sum_{\substack{x \in C_\blambda \\ x^k = y}} \chi(x)
	\prod_{\ell,\ve} r_{\ell,\ve}^{\ell m_{\ell,\ve}} 
	= \prod_{i=1}^{q} 
	\exp \left( 
	\frac{t_{j,\theta}^{\ell_i}}{2 \ell_i} \sum_{\ve \in R_{k/d_i,\theta}} \ve r_{\ell_i,\ve}^{\ell_i} 
	\right) .
	\]
	Recall now that $d_i$ $(1 \le i \le q)$ are all the divisors of $k$ which satisfy $\gcd(k/d_i,j) = 1$, so that $h_i := k/d_i$ are the divisors of $k$ which satisfy $\gcd(h_i,j) = 1$. Also, $\ell_i = d_i j = j k/h_i$. Thus
	\begin{align*}
		\sum_{b \ge 0} \frac{t_{j,\theta}^{j b}}{b! (2j)^b} 
		\sum_{\blambda \vdash j b} \sum_{\substack{x \in C_\blambda \\ x^k = y}} \chi(x)
		\prod_{\ell,\ve} r_{\ell,\ve}^{\ell m_{\ell,\ve}} 
		&= \prod_{\substack{h | k \\ \gcd(h,j) = 1}} 
		\exp \left( 
		\frac{t_{j,\theta}^{j k/h}}{2 j k/h} \sum_{\ve \in R_{h,\theta}} \ve r_{j k/h,\ve}^{j k/h} 
		\right) ,
	\end{align*}
	as claimed.
\end{proof} 

The general case follows.
It is a signed analogue of Theorem~\ref{t:gf_refined_root_enumerator_Bn}.

\begin{theorem}\label{t:gf_refined_sroot_enumerator_Bn}
	For any integer $k$, the refined generating function for $\chi(x)$ over all elements $x \in B_n$ satisfying $x^k = y$ is
	\begin{align*}
		&\ \sum_{n \ge 0} \frac{1}{|B_n|}
		\sum_{\bmu \vdash n} \sum_{y \in C_\bmu} 
		\sum_{\blambda \vdash n} 
		\sum_{\substack{x \in C_\blambda \\ x^k = y}} \chi(x)
		\prod_{\ell,\ve} r_{\ell,\ve}^{\ell m_{\ell,\ve}} 
		\prod_{j,\theta} t_{j,\theta}^{j b_{j,\theta}} \\
		&= \exp \left(
		\sum_{j,\theta}
		\sum_{\substack{h|k \\ \gcd(h,j) = 1}} 
		\frac{t_{j,\theta}^{j k/h}}{2 j k/h} \sum_{\ve \in R_{h,\theta}} \ve r_{j k/h,\ve}^{j k/h} 
		\right) ,
	\end{align*}
	where $R_{h,\theta} := \{\ve \in \ZZ_2 \,:\, \ve^h = \theta\}$, cycle-type $\blambda$ has $m_{\ell,\ve}$ cycles of length $\ell$ and $\ZZ_2$-class $\ve$, and cycle-type $\bmu$ has $b_{j,\theta}$ cycles of length $j$ and $\ZZ_2$-class $\theta$. 
\end{theorem}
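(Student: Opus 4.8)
The plan is to mimic the proof of Theorem~\ref{t:gf_refined_root_enumerator_Bn}, carrying the extra weight $\chi(x)$ through every step; the one new ingredient is that $\chi$ is multiplicative over disjoint cycles, so it splits along the cycle-type decomposition exactly as the monomials in $\br$ and $\bt$ do.

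First I would fix $y \in B_n$ and write $y = \prod_{j,\theta} y_{j,\theta}$, where $y_{j,\theta}$ is the product of the $b_{j,\theta}$ cycles of $y$ of length $j$ and $\ZZ_2$-class $\theta$. By Lemma~\ref{t:power_of_one_cycle_Bn}, a single cycle of $x$, of length $\ell$ and $\ZZ_2$-class $\ve$, has $k$-th power a product of $\gcd(k,\ell)$ disjoint cycles, all of the same length $\ell/\gcd(k,\ell)$ and the same $\ZZ_2$-class $\ve^{k/\gcd(k,\ell)}$. Consequently every $x$ with $x^k = y$ decomposes compatibly as $x = \prod_{j,\theta} x_{j,\theta}$, where $x_{j,\theta}$ collects exactly those cycles of $x$ whose $k$-th powers are of type $(j,\theta)$, so that $x_{j,\theta}^k = y_{j,\theta}$; conversely, any family of factors with $x_{j,\theta}^k = y_{j,\theta}$ reassembles to such an $x$. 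Since $\chi(x)$ is the product of the $\ZZ_2$-classes of the cycles of $x$, we get $\chi(x) = \prod_{j,\theta} \chi(x_{j,\theta})$, and the monomial $\prod_{\ell,\ve} r_{\ell,\ve}^{\ell m_{\ell,\ve}}$ factors in the same way, so the whole generating function factors as a product over the pairs $(j,\theta)$.

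Next I would apply Lemma~\ref{t:sroot_enumerator_similar_cycles_Bn} with $b = b_{j,\theta}$ to each factor and multiply over $j$ and $\theta$, obtaining
\begin{align*}
    &\ \prod_{j,\theta}
    \sum_{b_{j,\theta} \ge 0} \frac{t_{j,\theta}^{j b_{j,\theta}}}{b_{j,\theta}! (2j)^{b_{j,\theta}}}
    \sum_{\blambda \vdash j b_{j,\theta}} \sum_{\substack{x \in C_\blambda \\ x^k = y_{j,\theta}}} \chi(x)
    \prod_{\ell,\ve} r_{\ell,\ve}^{\ell m_{\ell,\ve}} \\
    &= \prod_{j,\theta} \prod_{\substack{h | k \\ \gcd(h,j) = 1}}
    \exp\!\left( \frac{t_{j,\theta}^{j k/h}}{2 j k/h} \sum_{\ve \in R_{h,\theta}} \ve\, r_{j k/h,\ve}^{j k/h} \right).
\end{align*}
Then, using $|C_y| = |B_n|/\prod_{j,\theta} b_{j,\theta}! (2j)^{b_{j,\theta}}$ to rewrite the product of local sums as a single sum over $n \ge 0$, $\bmu \vdash n$ and $y \in C_\bmu$ (weighted by $1/|B_n|$), and merging the product of exponentials into one exponential of a sum, yields the claimed identity.

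I expect the only point genuinely requiring care is the compatibility claim in the middle step — that every solution $x$ of $x^k = y$ respects the decomposition of $y$ into its cycle-type pieces, with no mixing between pieces — but this is precisely the content of Lemma~\ref{t:power_of_one_cycle_Bn} and was already used in the unsigned case, so it is routine here. All the remaining manipulations (the multinomial rearrangement and the conversion of a product of power series into a product of exponentials) were carried out once and for all in the proof of Lemma~\ref{t:sroot_enumerator_similar_cycles_Bn}.
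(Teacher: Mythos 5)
Your proposal is correct and follows the same route as the paper's proof: decompose $y$ by cycle type, apply Lemma~\ref{t:sroot_enumerator_similar_cycles_Bn} with $b = b_{j,\theta}$, multiply over $(j,\theta)$, and convert to a single sum using the formula for $|C_y|$. Your explicit remarks on the compatibility of the decomposition (via Lemma~\ref{t:power_of_one_cycle_Bn}) and the multiplicativity of $\chi$ over cycles make transparent a step the paper treats as routine, but do not change the argument.
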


\begin{proof}
	Let $y = \prod_{j,\theta} y_{j,\theta}$, where $y_{j,\theta}$ has $b_{j,\theta}$ cycles, each of length $j$ and $\ZZ_2$-class $\theta$. Use Lemma~\ref{t:sroot_enumerator_similar_cycles_Bn}, with $b = b_{j,\theta}$, and multiply over all $j$ and $\theta$ to get
	\begin{align*}
		&\ \prod_{j,\theta} 
		\sum_{b_{j,\theta} \ge 0} \frac{t_{j,\theta}^{j b_{j,\theta}}}{b_{j,\theta}! (2j)^{b_{j,\theta}}} 
		\sum_{\blambda \vdash j b_{j,\theta}} \sum_{\substack{x \in C_\blambda \\ x^k = y_{j,\theta}}} \chi(x)
		\prod_{\ell,\ve} r_{\ell,\ve}^{\ell m_{\ell,\ve}} \\
		&= \prod_{j,\theta}
		\prod_{\substack{h | k \\ \gcd(h,j) = 1}} 
		\exp \left( 
		\frac{t_{j,\theta}^{j k/h}}{2 j k/h} \sum_{\ve \in R_{h,\theta}} \ve r_{j k/h,\ve}^{j k/h} 
		\right) ,
	\end{align*}
	where $R_{h,\theta} := \{\ve \in \ZZ_2 \,:\, \ve^h = \theta\}$. 
	Using
	\[
	|C_y| = \frac{|B_n|}{\prod_{j,\theta} b_{j,\theta}! (2j)^{b_{j,\theta}}}
	\]
	gives the equivalent form
	\begin{align*}
		&\ \sum_{n \ge 0} \frac{1}{|B_n|}
		\sum_{\bmu \vdash n} \sum_{y \in C_\bmu} 
		\sum_{\blambda \vdash n} 
		\sum_{\substack{x \in C_\blambda \\ x^k = y}} \chi(x)
		\prod_{\ell,\ve} r_{\ell,\ve}^{\ell m_{\ell,\ve}} 
		\prod_{j,\theta} t_{j,\theta}^{j b_{j,\theta}} \\
		&= \prod_{j,\theta}
		\prod_{\substack{h | k \\ \gcd(h,j) = 1}} 
		\exp \left( 
		\frac{t_{j,\theta}^{j k/h}}{2 j k/h} \sum_{\ve \in R_{h,\theta}} \ve r_{j k/h,\ve}^{j k/h} 
		\right) ,
	\end{align*}
	as claimed.
\end{proof}

\begin{proof}[Proof of Theorem~\ref{t:gf_sroot_enumerator_Bn}]
	Set $r_{\ell,\ve} = 1$ in Theorem~\ref{t:gf_refined_sroot_enumerator_Bn}, for all $\ell$ and $\ve$, and note that 
	\[
	\sum_{\blambda \vdash n} 
	\sum_{\substack{x \in C_\blambda \\ x^k = y}} \chi(x)
	= \sroots_k^{B_n}(y) .
	\]
	Recall that $\sum_{\ve \in R_{h,\theta}} \ve = \sroots_h^{\ZZ_2}(\theta)$ and $|Z_\bmu| = |B_n|/|C_\bmu|$.
	Denoting $\bt^{\bc(\bmu)} :=         \prod_{j,\theta} t_{j,\theta}^{j b_{j,\theta}}$, Theorem~\ref{t:gf_refined_sroot_enumerator_Bn} reduces to Theorem~\ref{t:gf_sroot_enumerator_Bn}, presenting the (unrefined) generating function for the signed $k$-th root enumerator in $B_n$.
\end{proof}

\subsection{Signed root enumerators and higher Lie characters}
\label{subsec:sroots_and_hLc}

We now use Theorem~\ref{t:gf_sroot_enumerator_Bn} and Theorem~\ref{t:summation_4_Bn} to prove Theorem~\ref{t:sroots_eq_spsi}.
The proof is similar to that of Theorem~\ref{t:roots_eq_psi}.

\begin{proof}[Proof of Theorem~\ref{t:sroots_eq_spsi}]
	By Theorem~\ref{t:gf_sroot_enumerator_Bn},
	\[
	\sum_{n \ge 0} \sum_{\bmu \vdash n} 
	\sroots_k^{B_n}(\bmu) \,
	\frac{\bt^{\bc(\bmu)}}{|Z_\bmu|}
	= \exp \left(
	\sum_{j,\theta}
	\sum_{\substack{h|k \\ \gcd(h,j) = 1}} 
	\sroots_h^{\ZZ_2}(\theta) \,
	\frac{t_{j,\theta}^{j k/h}}{2 j k/h}         \right) .
	\]
	On the other hand, by Theorem~\ref{t:summation_4_Bn},
	\[
	\sum_{n \ge 0}  
	\sum_{\blambda \vdash n} \sum_{\bmu \vdash n} 
	\psi_{B_n}^\blambda(\bmu) \,       \frac{\bs^{\bc(\blambda)}\bt^{\bc(\bmu)}}{|Z_\bmu|} 
	= \exp \left( 
	\sum_{i,\ve} \sum_{j,\theta} \sum_{e | \gcd(i,j)}  
	K_{\ve,\theta}(e) \,
	\frac{(s_{i,\ve} t_{j,\theta})^{ij/e}}{2ij/e} 
	\right) ,
	\]
	where
	\[
	K_{\ve,\theta}(e)
	:= \ve \theta \cdot \mu(2e)
	+ \frac{(1+\ve)(1+\theta)}{2} \cdot \mu(e).
	\] 
	Letting
	\[
	s_{i,\ve} :=
	\begin{cases}
		1, &\text{if } i|k \text{ and } \ve^{k/i} = -1; \\
		0, &\text{otherwise,}
	\end{cases}
	\]
	we obtain, by Definition~\ref{def:psi_vdash_sk},
	\begin{align*}
		\sum_{n \ge 0} \sum_{\bmu \vdash n} 
		\spsi_k^{B_n}(\bmu) \,
		\frac{\bt^{\bc(\bmu)}}{|Z_\bmu|}
		&= 
		\sum_{n \ge 0} \sum_{\bmu \vdash n} 
		\sum_{\blambda \vdash_\sk n} 
		\psi_{B_n}^\blambda(\bmu) \, 
		\frac{\bt^{\bc(\bmu)}}{|Z_\bmu|} \\
		&= 
		\exp \left( 
		\sum_{j,\theta} 
		\sum_{\substack{i \\ i|k}} 
		\sum_{\substack{\ve \\ \ve^{k/i} = -1}} 
		\sum_{\substack{e \\ e | \gcd(i,j)}}  
		K_{\ve,\theta}(e) \,
		\frac{t_{j,^\theta}^{ij/e}}{2ij/e} 
		\right) .
	\end{align*}
	Fixing the integer $k$, it follows that the claim 
	\[
	\sroots_k^{B_n} = \spsi_k^{B_n}
	\qquad (\forall n \ge 0)
	\]
	is equivalent to the claim that, for any $j \ge 1$ and $\theta \in \ZZ_2$,
	\begin{equation}\label{eq:sroots_hLc_1_Bn}
		\sum_{\substack{h \\ h|k \\ \gcd(h,j) = 1}} 
		\sroots_h^{\ZZ_2}(\theta) \, 
		\frac{t_{j,\theta}^{j k/h}}{2 j k/h}         = \sum_{\substack{i \\ i|k}} 
		\sum_{\substack{\ve \\ \ve^{k/i} = -1}} 
		\sum_{\substack{e \\ e | \gcd(i,j)}}  
		K_{\ve,\theta}(e) \,
		\frac{t_{j,\theta}^{ij/e}}{2ij/e} .
	\end{equation}
	Letting $d := i/e$ and then $h := k/d$ on the RHS of~\eqref{eq:sroots_hLc_1_Bn} yields
	\begin{align*}
		\text{RHS}
		&= 
		\sum_{\substack{i \\ i|k}} 
		\sum_{\substack{e \\ e|i \\ e|j}}  
		\sum_{\substack{\ve \\ \ve^{k/i} = -1}}
		K_{\ve,\theta}(e) \,
		\frac{t_{j,\theta}^{ij/e}}{2ij/e} \\
		&= 
		\sum_{\substack{d \\ d|k}} 
		\sum_{\substack{e \\ e|(k/d) \\ e|j}}  
		\sum_{\substack{\ve \\ \ve^{k/(de)} = -1}} 
		K_{\ve,\theta}(e) \,
		\frac{t_{j,\theta}^{j d}}{2j d} \\
		&= 
		\sum_{\substack{h \\ h|k}} 
		\sum_{\substack{e \\ e|h \\ e|j}}  
		\sum_{\substack{\ve \\ \ve^{h/e} = -1}} 
		K_{\ve,\theta}(e) \,
		\frac{t_{j,\theta}^{j k/h}}{2j k/h} .
	\end{align*}
	Comparing powers of $t_{j,\theta}$ in~\eqref{eq:sroots_hLc_1_Bn}, it follows that we need to prove that, for any $j \ge 1$, $\theta \in \ZZ_2$ and a divisor $h$ of $k$,
	\begin{equation}\label{eq:sroots_hLc_2_Bn}
		\sum_{\substack{e \\ e|\gcd(h,j)}} 
		\sum_{\substack{\ve \\ \ve^{h/e} = -1}} 
		K_{\ve,\theta}(e) 
		= \begin{cases}
			\sroots_h^{\ZZ_2}(\theta), &\text{if } \gcd(h,j) = 1; \\
			0, &\text{otherwise.}
		\end{cases}
	\end{equation}
	Indeed, by 
	the explicit formula for $K_{\ve,\theta}(e)$,
	\[
	\sum_{\substack{\ve \\ \ve^{h/e} = -1}} 
	K_{\ve,\theta}(e) 
	= \begin{cases}
		-\mu(2e), &\text{if } \theta = +1 \text{ and } h/e \text{ is odd;} \\
		\mu(2e), &\text{if } \theta = -1 \text{ and } h/e \text{ is odd;} \\
		0, &\text{if } h/e \text{ is even.}
	\end{cases}
	\]
	Since $\mu(2e) = 0$ for even $e$,
	\begin{align*}
		\sum_{\substack{e \\ e|\gcd(h,j) \\ h/e \text{ odd}}} \mu(2e)
		&= -\sum_{\substack{e \text{ odd} \\ e|\gcd(h,j) \\ h/e \text{ odd}}} \mu(e)
		= \begin{cases}
			-\sum_{e|\gcd(h,j)} \mu(e), &\text{if } h \text{ is odd;} \\
			0, &\text{otherwise} 
		\end{cases} \\
		&= \begin{cases}
			-1, &\text{if } \gcd(h,j) = 1 \text{ and } h \text{ is odd;} \\
			0, &\text{otherwise.} 
		\end{cases}
	\end{align*}
	It follows that
	\[
	\sum_{\substack{e \\ e|\gcd(h,j)}} 
	\sum_{\substack{\ve \\ \ve^{h/e} = -1}} 
	K_{\ve,\theta}(e) 
	= \begin{cases}
		1, &\text{if } \theta = +1,\, \gcd(h,j) = 1 \text{ and } h \text{ is odd;} \\
		-1, &\text{if } \theta = -1,\, \gcd(h,j) = 1 \text{ and } h \text{ is odd;} \\
		0, &\text{otherwise.}
	\end{cases}
	\]
	If $\gcd(h,j) \ne 1$ then the RHS here is zero, and if $\gcd(h,j) = 1$ then the RHS is $\sroots_h^{\ZZ_2}(\theta)$.
	This proves~\eqref{eq:sroots_hLc_2_Bn} and completes the proof of the theorem.
\end{proof}

\subsection{Proofs of Theorems~\ref{t:Dn-higher} and~\ref{t:Dn-proper}}
\label{subsec:D_proper}

\begin{proof}[Proof of Theorem~\ref{t:Dn-higher}]
	Fix an integer $k$, a positive integer $n$ and an element $y \in B_n$.
	By the definitions of the root enumerator $\roots_k^{B_n}$, the signed root enumerator $\sroots_k^{B_n}$ (see Definition~\ref{def:sroots_Bn})
	and the character $\chi$,
	\[
	\roots_k^{B_n}(y) + \sroots_k^{B_n}(y)
	= \sum_{\substack{x \in B_n \\ x^k = y}} 1
	+ \sum_{\substack{x \in B_n \\ x^k = y}} \chi(x)
	= \sum_{\substack{x \in D_n \\ x^k = y}} 2
	= \begin{cases}
		2 \roots_k^{D_n}(y), &\text{if } y \in D_n; \\
		0, &\text{if } y \in B_n \setminus D_n .
	\end{cases}
	\]
	By Theorems~\ref{t:roots_eq_psi} and~\ref{t:sroots_eq_spsi}, it follows that
	\[
	\psi_k^{B_n}(y) + \spsi_k^{B_n}(y)
	= \begin{cases}
		2 \roots_k^{D_n}(y), &\text{if } y \in D_n; \\
		0, &\text{if } y \in B_n \setminus D_n .
	\end{cases}
	\]
	By Definitions~\ref{def:psi_vdash_k} and~\ref{def:psi_vdash_sk}, this implies Theorem~\ref{t:Dn-higher}.
\end{proof}

\begin{proof}[Proof of Theorem~\ref{t:Dn-proper}]
	Consider the expansion of the class function $\roots_k^{D_n}$ as a linear combination of irreducible $D_n$-characters. 
	By Theorem~\ref{t:Dn-higher}, 
	$2 \roots_k^{D_n}$ is a restriction (to $D_n$) of a proper character of $B_n$, and is therefore a proper character of $D_n$.
	In particular, all the coefficients in this expansion are nonnegative. 
	By the result of Frobenius cited at the beginning of Section~\ref{sec:introduction},
	all the coefficients in the expansion of $\roots_k^{D_n}$ are integers.
	This completes the proof. 
\end{proof}

\subsection{Higher Lie Characters of type $D$}
\label{subsec:HLC_Dn}

	%
	%
	%




In this subsection we use Theorem~\ref{t:Dn-higher} to prove Theorem~\ref{t:Dn-higher_odd}. 

\begin{definition}\label{def:psi_D} 
	Let $C$ be a conjugacy class in $D_n$, and let $x \in C$. 
	Consider the linear character $\omega^x = \omega_{B_n}^x$ from Definition~\ref{def:HLC_Bn}, which is defined on $Z_{B_n}(x)$, the centralizer of $x$ in $B_n$.
	Define
	\[
	\omega_{D_n}^x 
	:= \omega^x\downarrow_{Z_{D_n}(x)}^{Z_{B_n}(x)}
	\]
	and
	\[
	\psi_{D_n}^x
	:= \omega_{D_n}^x\uparrow_{Z_{D_n}(x)}^{D_n} .
	\]
	As in the case of $B_n$, the character $\psi_{D_n}^x$ is independent of the choice of $x \in C$, and will therefore be denoted $\psi_{D_n}^C$.
\end{definition}

	%




\begin{lemma}\label{t:D_conj_and_cent}
	For $x \in D_n$, the following conditions are equivalent.
	\begin{enumerate}
		
		\item[(a)]
		The conjugacy class of $x$ in $D_n$ is equal to its conjugacy class in $B_n$.
		
		\item[(b)]
		The index $[Z_{B_n}(x):Z_{D_n}(x)] = 2$.
		
		\item[(c)]
		The centralizer $Z_{B_n}(x) \not\subseteq D_n$.
		
		\item[(d)]
		The element $x$ has a cycle of class $-1 \in \ZZ_2$ or a cycle of odd length.
		
	\end{enumerate}
\end{lemma}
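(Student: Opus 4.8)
The plan is to prove that conditions (a), (b), (c) are equivalent by elementary group theory applied to the index-$2$ subgroup $D_n = \ker\chi$ of $B_n$, and that the substantive equivalence is (c) $\Leftrightarrow$ (d), which I would deduce from the explicit description of centralizers in $B_n$ (Lemma~\ref{t:centralizer_in_Bn}) together with the fact that $\chi(g)$ is the product of the $\ZZ_2$-classes of the cycles of $g$.

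I would first establish (a) $\Leftrightarrow$ (b) and (b) $\Leftrightarrow$ (c). Since $x \in D_n \le B_n$, the $D_n$-conjugacy class of $x$ is contained in the $B_n$-conjugacy class of $x$, so the two coincide iff they have the same size. Using $Z_{D_n}(x) = Z_{B_n}(x) \cap D_n$ and $[B_n:D_n] = 2$, the ratio of the two class sizes is
\[
    \frac{|B_n|/|Z_{B_n}(x)|}{|D_n|/|Z_{D_n}(x)|} = \frac{[B_n:D_n]}{[Z_{B_n}(x):Z_{D_n}(x)]} = \frac{2}{[Z_{B_n}(x):Z_{D_n}(x)]},
\]
which equals $1$ exactly when $[Z_{B_n}(x):Z_{D_n}(x)] = 2$; this gives (a) $\Leftrightarrow$ (b). Likewise, $Z_{D_n}(x) = Z_{B_n}(x) \cap D_n$ and $[B_n:D_n] = 2$ force $[Z_{B_n}(x):Z_{B_n}(x)\cap D_n] \in \{1,2\}$, with value $2$ precisely when $Z_{B_n}(x) \not\subseteq D_n$, giving (b) $\Leftrightarrow$ (c).

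The main step I expect is (c) $\Leftrightarrow$ (d). By Lemma~\ref{t:centralizer_in_Bn}(a), $Z_{B_n}(x)$ is a direct product, over $i \ge 1$ and $\ve \in \ZZ_2$, of subgroups realized inside $B_{i a_{i,\ve}} \le B_n$ and isomorphic to $G_{i,\ve} \wr S_{a_{i,\ve}}$, these factors having pairwise disjoint supports in $[\pm n]$; since $\chi$ is a homomorphism, $Z_{B_n}(x) \not\subseteq D_n = \ker\chi$ iff some factor with $a_{i,\ve} \ge 1$ contains an element of $\chi$-value $-1$. I would then inspect the factors using Lemma~\ref{t:centralizer_in_Bn}(b),(c). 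If $\ve = -1$, a generator of $G_{i,-} \cong \ZZ_{2i}$ is a negative $i$-cycle, which has $\chi$-value $-1$, so this factor meets $B_n \setminus D_n$. If $\ve = +1$ and $i$ is odd, the $\ZZ_2$-generator of $G_{i,+} \cong \ZZ_2 \times \ZZ_i$ is the longest element $w_0 \in B_i$ with $\chi(w_0) = (-1)^i = -1$, so again this factor meets $B_n \setminus D_n$. If $\ve = +1$ and $i$ is even, the generators of $G_{i,+}$ are $w_0 \in B_i$ with $\chi(w_0) = (-1)^i = +1$ and the positive $i$-cycle $x_{i,+}$ with $\chi$-value $+1$, while every element of the wreathing subgroup $S_{a_{i,+}}$ is, as an element of $B_{i a_{i,+}}$, a permutation matrix with all entries $+1$ and hence has $\chi$-value $+1$; so this entire factor lies in $\ker\chi = D_n$. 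Consequently $Z_{B_n}(x) \not\subseteq D_n$ iff $x$ has a negative cycle or a positive cycle of odd length; and since a negative cycle of odd length is in particular a cycle of class $-1$, this is exactly condition (d).

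The hard part will be the last case analysis: pinning down the $\chi$-values of the generators of each $G_{i,\ve}$ and of the wreathing subgroups $S_{a_{i,\ve}}$, and checking that the direct-product decomposition of $Z_{B_n}(x)$ is compatible with $\chi$ because the factors have disjoint supports in $[\pm n]$. Everything else is routine group theory about an index-$2$ subgroup.
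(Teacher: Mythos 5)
Your proof is correct and follows the same route the paper takes: (a) $\Leftrightarrow$ (b) $\Leftrightarrow$ (c) is the elementary index-$2$ bookkeeping, and (c) $\Leftrightarrow$ (d) is read off from the direct-product description of $Z_{B_n}(x)$ in Lemma~\ref{t:centralizer_in_Bn}. The paper simply states these two equivalences without spelling out the case analysis; your write-up fills in exactly those details (including the $\chi$-values $\chi(w_0) = (-1)^i$ and $\chi(x_{i,-}) = -1$), and the final reconciliation of ``negative cycle or positive odd cycle'' with condition (d) is handled correctly.
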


\begin{proof} 
	Conditions (a), (b) and (c) are equivalent since $[B_n:D_n] = 2$.
	Conditions (c) and (d) are equivalent by Lemma~\ref{t:centralizer_in_Bn}, describing the exact structure of the centralizer $Z_{B_n}(x)$.  
\end{proof}

\begin{lemma}\label{t:HLC_D}
	If the equivalent conditions in Lemma~\ref{t:D_conj_and_cent} are satisfied, and $C$ denotes the conjugacy class in condition (a), then
	\[
	\psi_{D_n}^C 
	= \psi_{B_n}^C\downarrow_{D_n}^{B_n}. 
	\]
\end{lemma}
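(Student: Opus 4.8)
The plan is to prove this by a direct Mackey-type computation, using the special feature that $D_n$ is \emph{normal} of index $2$ in $B_n$. Write $H := Z_{B_n}(x)$ and $Z := Z_{D_n}(x) = H \cap D_n$. The only input from the hypothesis I need is the following: by condition (c) of Lemma~\ref{t:D_conj_and_cent}, $H \not\subseteq D_n$; since $D_n \trianglelefteq B_n$ with $[B_n:D_n] = 2$, the set $D_n H$ is then a subgroup strictly containing $D_n$, so $B_n = D_n H$.

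The first real step is to choose a right transversal $T$ of $Z$ in $D_n$ and show it is \emph{simultaneously} a right transversal of $H$ in $B_n$. The count is immediate: $[B_n:H] = |D_n H|/|H| = |D_n|/|D_n \cap H| = [D_n:Z] = |T|$; and the cosets $tH$ $(t \in T)$ are distinct because $t_1 H = t_2 H$ with $t_1,t_2 \in D_n$ forces $t_2^{-1}t_1 \in H \cap D_n = Z$, hence $t_1 = t_2$.

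Next I apply the explicit induced-character formula used in the proof of Lemma~\ref{t:HLC_values} (from~\cite[(5.1)]{Isaacs}), with this transversal: for any $g \in D_n$,
\[
    \psi_{B_n}^C(g) = \sum_{t \in T} (\omega^x)^0(t^{-1} g t),
\]
where $(\omega^x)^0$ is the extension of $\omega^x$ by zero from $H$ to $B_n$. Since $T \subseteq D_n$ and $g \in D_n$ with $D_n$ normal, every conjugate $t^{-1} g t$ lies in $D_n$; hence $(\omega^x)^0(t^{-1} g t)$ is nonzero only when $t^{-1} g t \in H \cap D_n = Z$, in which case it equals $\omega^x(t^{-1} g t) = \omega_{D_n}^x(t^{-1} g t)$ by the definition of $\omega_{D_n}^x$ as the restriction of $\omega^x$ to $Z$ (Definition~\ref{def:psi_D}). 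Thus each summand coincides with the value at $t^{-1}gt$ of the extension of $\omega_{D_n}^x$ by zero from $Z$ to $D_n$. Using once more that $T$ is a right transversal of $Z$ in $D_n$, the right-hand side is precisely $\omega_{D_n}^x \uparrow_{Z}^{D_n}(g) = \psi_{D_n}^C(g)$. As $g \in D_n$ was arbitrary, $\psi_{D_n}^C = \psi_{B_n}^C \downarrow_{D_n}^{B_n}$, as claimed.

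I do not expect a genuine obstacle here: the statement is Mackey's subgroup theorem specialized to the case where one subgroup is normal, so that a single double coset $D_n \backslash B_n / H$ occurs. The one nontrivial ingredient is the equality $B_n = D_n H$, which is exactly what condition (c) provides; everything else is the transversal bookkeeping sketched above. (One could equally invoke Mackey's formula directly and observe that the unique double coset contributes $\operatorname{Ind}_{D_n \cap H}^{D_n}(\omega^x\downarrow_{D_n \cap H}) = \psi_{D_n}^C$, but the hands-on version keeps the paper self-contained.)
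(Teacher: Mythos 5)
Your proof is correct and follows essentially the same route as the paper's: both hinge on the equality $B_n = D_n Z_{B_n}(x)$ supplied by condition (c), and both then apply the single-double-coset case of Mackey's subgroup theorem to pass from $\omega^x_{D_n}\uparrow^{D_n}$ to $\psi^C_{B_n}\downarrow_{D_n}$. The paper simply cites the result (Isaacs, Problem 5.2), while you reprove that special case by hand via a common transversal of $Z_{D_n}(x)$ in $D_n$ and of $Z_{B_n}(x)$ in $B_n$ and the explicit induced-character formula already used in Lemma~\ref{t:HLC_values}; the computation is sound, and the only cosmetic point is that the representatives you choose are for cosets of the form $tH$, as required by the formula from~\cite[(5.1)]{Isaacs}.
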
    

\begin{proof} 
	If $Z_{B_n}(x) \not\subseteq D_n$ then $B_n = D_n Z_{B_n}(x)$, and of course $Z_{D_n}(x) = D_n \cap Z_{B_n}(x)$. Hence, by \cite[Problem~(5.2)]{Isaacs}, a simple case of Mackey's Theorem, 
	\[
	\psi_{D_n}^C 
	= \omega^x_{D_n}\uparrow_{Z_{D_n}(x)}^{D_n}
	= \omega^x_{B_n}\downarrow_{D_n \cap Z_{B_n}(x)}^{Z_{B_n}(x)} \uparrow^{D_n}
	= \omega^x_{B_n}\uparrow_{Z_{B_n}(x)}^{D_n Z_{B_n}(x)} \downarrow_{D_n}
	= \psi_{B_n}^C \downarrow_{D_n}^{B_n} . 
	\qedhere
	\]
\end{proof}

We also need the following general property of root enumerators.

\begin{lemma}\label{t:index2_kodd}
	If $H$ is an index $2$ subgroup of a finite group $G$ then, for every odd $k$, $\roots^H_k = \roots^G_k \downarrow_H^G$.
\end{lemma}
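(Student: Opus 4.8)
The plan is to compare, for a fixed $g \in H$, the two sets $A_G := \{x \in G : x^k = g\}$ and $A_H := \{x \in H : x^k = g\}$, and show they coincide when $k$ is odd. Clearly $A_H \subseteq A_G$, so it suffices to prove that every $x \in G$ with $x^k = g \in H$ already lies in $H$. First I would recall that $H$, being an index $2$ subgroup, is normal in $G$, and that the quotient map $\pi : G \to G/H \cong \ZZ_2$ is a group homomorphism. Then, for any $x \in G$ with $x^k = g$, applying $\pi$ gives $\pi(x)^k = \pi(g) = 1$ in $\ZZ_2$, since $g \in H = \ker \pi$. Because $k$ is odd and $\pi(x) \in \ZZ_2$, we have $\pi(x)^k = \pi(x)$ (the only elements of $\ZZ_2$ are $1$ and the nontrivial one $t$, and $t^k = t$ for odd $k$). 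Hence $\pi(x) = 1$, i.e.\ $x \in H$. This shows $A_G = A_H$ as sets.

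It then follows immediately that $\roots^G_k(g) = |A_G| = |A_H| = \roots^H_k(g)$ for every $g \in H$, which is precisely the statement that $\roots^H_k = \roots^G_k \downarrow_H^G$ as class functions on $H$. (One should note in passing that $\roots^G_k$ is a class function on $G$ and its restriction to $H$ is a class function on $H$, so the identity of values on all of $H$ is exactly the desired equality of restricted class functions; there is no subtlety about $H$-conjugacy versus $G$-conjugacy here, since we are comparing functions, not their decompositions into irreducibles.)

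There is essentially no obstacle: the only point requiring the hypothesis that $k$ is odd is the step $\pi(x)^k = \pi(x)$ in $G/H \cong \ZZ_2$, which fails for even $k$ (then $\pi(x)^k = 1$ always, and $x$ need not lie in $H$). The argument uses only that $[G:H] = 2$ forces $H \trianglelefteq G$ with cyclic quotient of order $2$; no character theory or structure theory of $G$ is needed.
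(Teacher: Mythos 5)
Your proof is correct and takes essentially the same approach as the paper: the paper phrases the argument via the sign character $\sign_H : G \to \{1,-1\}$ with kernel $H$, which is just your quotient map $\pi : G \to G/H \cong \ZZ_2$ under the standard identification, and in both cases the key step is that for odd $k$ a $k$-th root of an element of $H = \ker$ must itself lie in $H$.
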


\begin{proof}
	As a subgroup of index $2$ in $G$, $H \triangleleft G$. It is therefore the kernel of a linear $G$-character, namely $\sign_H: G \to \{1,-1\}$ defined by
	\[
	\sign_H(x) 
	:= \begin{cases}
		1, &\text{if } x \in H; \\
		-1, &\text{if } x \in G \setminus H.
	\end{cases}
	\]
	Since $k$ is odd, it follows that, for any $x, y \in G$: 
	\[
	x^k = y \,\Longrightarrow\, 
	\sign_H(x) = \sign_H(y).
	\] 
	Thus, all $k$-th roots (in $G$) of an element of $H$ belong to $H$, namely $\roots^H_k = \roots^G_k \downarrow_H^G$.
\end{proof}

	
\begin{proof}[Proof of Theorem~\ref{t:Dn-higher_odd}]
	Assume first that $k$ is odd.
	By Lemma~\ref{t:index2_kodd} and Theorem~\ref{t:Bn-higher},
	\[
	\roots^{D_n}_k
	= \roots^{B_n}_k\downarrow^{B_n}_{D_n}
	= \sum_{C \in \Conj(B_n) \,:\, C^k = \{1\}} \psi_{B_n}^C\downarrow^{B_n}_{D_n} .
	\]
	If $C \in \Conj(B_n)$ satisfies $C^k = \{1\}$ then, by the proof of Lemma~\ref{t:index2_kodd}, necessarily $C \subseteq D_n$.
	Each element of $C$ has a cycle of odd length (in fact, all its cycles have odd lengths, since $k$ is odd) and therefore, by Lemma~\ref{t:D_conj_and_cent}, $C$ is equal to the $D_n$-conjugacy class of each of its elements, namely $C \in \Conj(D_n)$.
	Conversely, if $C \in \Conj(D_n)$ satisfies $C^k = \{1\}$ then $C \in \Conj(B_n)$.
	Therefore, by Lemma~\ref{t:HLC_D},
	\[
	\roots^{D_n}_k
	= \sum_{C \in \Conj(D_n) \,:\, C^k = \{1\}} 
	\psi_{B_n}^C \downarrow^{B_n}_{D_n} 
	= \sum_{C \in \Conj(D_n) \,:\, C^k = \{1\}} \psi_{D_n}^C,
	\]
	as claimed.
	
	Assume now that $k$ is even and $n$ is odd.
	Then $w_0 \not\in D_n$, so $C^k = \{w_0\}$ is impossible. Hence, by Theorem~\ref{t:Dn-higher},
	\[
	2\roots^{D_n}_k
	= \sum_{C \in \Conj(B_n) \,:\, C^k = \{1\}} \psi_{B_n}^C \downarrow_{D_n}^{B_n} .  
	\]
	Since $D_n \triangleleft B_n$, each conjugacy class $C \in \Conj(B_n)$ is contained in either $D_n$ or $B_n \setminus D_n$.
	Since $n$ is odd, each element of $B_n$ has a cycle of odd length, and therefore, by Lemma~\ref{t:D_conj_and_cent}, if $C \in \Conj(B_n)$ is contained in $D_n$ then $C \in \Conj(D_n)$, and conversely: if $C \in \Conj(D_n)$ then $C \in \Conj(B_n)$. 
	Moreover, if $C \in \Conj(B_n)$ then also $w_0 C \in \Conj(B_n)$, and exactly one of $\{C,\,w_0 C\}$ is contained in $D_n$.
	Since $k$ is even, $C^k = 1$ if and only if $(w_0 C)^k = 1$.
	Hence
	\[
	2\roots^{D_n}_k
	= \sum_{C \in \Conj(D_n) \,:\, C^k = \{1\}} \psi_{B_n}^C \downarrow_{D_n}^{B_n}
	+ \sum_{C \in \Conj(D_n) \,:\, C^k = \{1\}} \psi_{B_n}^{w_0 C} \downarrow_{D_n}^{B_n}.
	\]
	By Definition~\ref{def:HLC_Bn}, for $x\in B_n$,  $\omega^{w_0x} = \omega^x \cdot (\chi\downarrow_{Z_{B_n}(x)}^{B_n})$, as linear characters of $Z_{B_n}(w_0x)=Z_{B_n}(x)$. 
	Thus, by \cite[Problem~(5.3)]{Isaacs}, $\psi^{w_0 C}_{B_n} = \psi^C_{B_n} \chi$ for $C \in \Conj(B_n)$. 
	Since $\chi\downarrow_{D_n} = 1$, we deduce that
	\[
	2\roots^{D_n}_k
	= \sum_{C \in \Conj(D_n) \,:\, C^k = \{1\}} (\psi^C_{B_n} (1 + \chi)) \downarrow_{D_n}^{B_n} 
	= \sum_{C \in \Conj(D_n) \,:\, C^k = \{1\}} 2 \psi^{C}_{B_n} \downarrow_{D_n}^{B_n},
	\]
	and using Lemma~\ref{t:HLC_D} we conclude that
	\[
	\roots^{D_n}_k
	= \sum_{C \in \Conj(D_n) \,:\, C^k = \{1\}} \psi^C_{D_n}.
	\qedhere
	\]
	
	
\end{proof}

\section{Final remarks and open problems}
\label{sec:final}

\subsection{Gelfand models}
\label{subsec:Gelfand}

A representation $\rho$ of a finite group $G$ 
is a {\em Gelfand model} for $G$ if the multiplicity in $\rho$ of every irreducible representation of $G$ is 1. 
The Inglis-Richardson-Saxl construction of a  Gelfand model for $S_n$~\cite{Inglis}  
and Baddeley's construction of a Gelfand model for $B_n$~\cite{Baddeley}  
may be reformulated as follows.

\begin{theorem}\label{t:ABn_Gelfand}
	The Gelfand model of the Weyl group of type $A_{n-1}$ (or $B_n$) is isomorphic to the multiplicity-free sum of the higher Lie representations 
	of $A_{n-1}$ (respectively, $B_n$) on all conjugacy classes of involutions in $A_{n-1}$ (respectively, $B_n$).
\end{theorem}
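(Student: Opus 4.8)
The argument will be a direct corollary of the $k=2$ case of the root‑enumerator identities already proved, combined with classical Frobenius--Schur theory. Write $G$ for the Weyl group $S_n = A_{n-1}$ or for $B_n$. By Frobenius and Schur~\cite{FS} (see also~\cite[\S 4]{Isaacs}) one has $\langle \roots_2^G, \chi\rangle = \nu_2(\chi)$, the Frobenius--Schur indicator of $\chi$, for every $\chi\in\Irr(G)$, and $\nu_2(\chi)=1$ whenever $\chi$ is afforded by a real representation. It is classical that every irreducible representation of $S_n$, and of $B_n$, can be realized over $\QQ$ (and hence over $\RR$); therefore $\nu_2(\chi)=1$ for all $\chi\in\Irr(G)$, so
\[
    \roots_2^G = \sum_{\chi\in\Irr(G)} \chi ,
\]
which is exactly the character of a Gelfand model of $G$.

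Next I would identify the left-hand side with a sum of higher Lie characters over the involution classes. For $G=S_n$, Scharf's Theorem~\ref{t:roots_eq_psi_Sn} with $k=2$ gives $\roots_2^{S_n} = \sum_{\lambda\vdash_2 n}\psi^\lambda$, where $\lambda\vdash_2 n$ means that every part of $\lambda$ divides $2$; this holds precisely when $\lambda$ is the cycle type of a (possibly trivial) involution of $S_n$. For $G=B_n$, Theorem~\ref{t:Bn-higher}, restated as Theorem~\ref{t:roots_eq_psi}, with $k=2$ gives $\roots_2^{B_n} = \sum_{\blambda\vdash_2 n}\psi^\blambda$; by Definition~\ref{def:vdash_k}(c), $\blambda=(\lambda^+,\lambda^-)\vdash_2 n$ means that every part of $\lambda^+$ divides $2$ and every part of $\lambda^-$ equals $1$, which is exactly the condition on the signed cycle type $\blambda$ that a representative $x\in B_n$ satisfy $x^2 = 1$. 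In both cases $\sum_{\blambda\vdash_2 n}\psi^\blambda$ is therefore the sum $\sum_{C}\psi^C$ of higher Lie characters over the conjugacy classes $C$ of involutions (understood as elements of order dividing $2$).

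Combining the two steps, for $G\in\{A_{n-1}, B_n\}$ we obtain
\[
    \sum_{C\in\Conj(G)\,:\,C^2=\{1\}} \psi^C = \roots_2^G = \sum_{\chi\in\Irr(G)} \chi ,
\]
so the (automatically multiplicity-free) sum of the higher Lie characters attached to involution classes is the character of a Gelfand model of $G$. Since each $\psi^C$ is, by construction in Definition~\ref{def:higher} (respectively Definition~\ref{def:HLC_Bn}), the character of the corresponding higher Lie representation, and a complex character determines a representation up to isomorphism, the stated isomorphism of representations follows, which is the assertion of Theorem~\ref{t:ABn_Gelfand}.

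I do not expect a serious obstacle: the content is a consequence of Scharf's theorem together with Theorem~\ref{t:Bn-higher} and the Frobenius--Schur theory recalled in the introduction. The two points needing care are the combinatorial bookkeeping that ``$\blambda\vdash_2 n$'' indexes exactly the involution classes --- in particular handling the convention that the identity class is counted (it contributes the trivial representation, which must appear once in any Gelfand model) --- and the invocation of rationality of the irreducible representations of $S_n$ and $B_n$, which is what forces all Frobenius--Schur indicators to equal $1$.
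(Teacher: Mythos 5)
Your proof is correct, and it is essentially the same argument the paper uses to prove the analogous Theorem~\ref{t:Dn_Gelfand}: combine the Frobenius--Schur identity $\roots_2^G=\sum_{\chi\in\Irr(G)}\chi$ (valid because finite Coxeter groups are totally orthogonal) with the root-enumerator decomposition into higher Lie characters indexed by the classes with $C^2=\{1\}$, i.e.\ Theorem~\ref{t:roots_eq_psi_Sn} for $S_n$ and Theorem~\ref{t:roots_eq_psi} for $B_n$. The paper itself states Theorem~\ref{t:ABn_Gelfand} without proof, as a reformulation of the cited constructions of Inglis--Richardson--Saxl and Baddeley, so there is no separate paper proof to compare against; your derivation from the paper's own machinery (including the correct observation that the identity class must be counted, and the correct unwinding of Definition~\ref{def:vdash_k}(c) at $k=2$) is sound.
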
 

As noted in~\cite{Baddeley}, 
a similar construction of a Gelfand model for $D_n$ as a sum of higher Lie characters exists when $n$ is odd but not when $n$ is even,  

\begin{definition}
	A representation $\rho$ of a finite group $G$ is a {\em double Gelfand model} for $G$ if the multiplicity in $\rho$ of every irreducible $G$-representation is 2. 
\end{definition}

Theorem~\ref{t:Dn-higher} implies the following.

\begin{theorem}\label{t:Dn_Gelfand}
	The sum of the restrictions to $D_n$ of the higher Lie characters of type $B_n$, over all conjugacy classes of involutions or square roots of the longest element in $B_n$, is a double Gelfand model for $D_n$.
\end{theorem}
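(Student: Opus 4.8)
The plan is to deduce the statement from Theorem~\ref{t:Dn-higher} in the special case $k=2$, combined with the Frobenius--Schur description of second root enumerators recalled in the Introduction.

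First I would specialize $k=2$ in Theorem~\ref{t:Dn-higher}. A conjugacy class $C\in\Conj(B_n)$ satisfies $C^2=\{1\}$ exactly when each of its elements is an involution (with the usual convention that the identity counts as such), and $C^2=\{w_0\}$ exactly when each of its elements is a square root of the longest element $w_0$. Since $w_0\ne 1$ these two families of classes are disjoint, and together they index precisely the higher Lie characters appearing in the statement. Set
\[
    \Psi \ :=\ \sum_{\substack{C\in\Conj(B_n)\\ C^2=\{1\}}}\psi^C \ +\ \sum_{\substack{C\in\Conj(B_n)\\ C^2=\{w_0\}}}\psi^C .
\]
By Definition~\ref{def:HLC_Bn}, each $\psi^C$ is induced from a linear character of a centralizer, hence a proper $B_n$-character; therefore $\Psi$ is a proper $B_n$-character, and so is its restriction $\Psi\downarrow_{D_n}^{B_n}$. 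By Theorem~\ref{t:Dn-higher} with $k=2$, $\Psi$ vanishes on $B_n\setminus D_n$ and equals $2\roots_2^{D_n}$ on $D_n$; hence $\Psi\downarrow_{D_n}^{B_n}=2\roots_2^{D_n}$ as characters of $D_n$.

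Next I would identify $\roots_2^{D_n}$. Since $D_n$ is a Weyl group, all of its complex irreducible representations are realizable over $\RR$ (in fact over $\QQ$), so their Frobenius--Schur indicators are all equal to $+1$. As recalled in the Introduction (following Frobenius--Schur~\cite{FS}; see also~\cite[\S 4]{Isaacs}), this forces
\[
    \roots_2^{D_n} \ =\ \sum_{\chi\in\Irr(D_n)}\chi ,
\]
the multiplicity-free sum of all irreducible characters of $D_n$ --- that is, a Gelfand model. Combining with the previous paragraph, the character in the statement is $\Psi\downarrow_{D_n}^{B_n}=2\roots_2^{D_n}=2\sum_{\chi\in\Irr(D_n)}\chi$, a proper character of $D_n$ (hence the trace of a representation) in which every irreducible character occurs with multiplicity exactly $2$; by the definition of a double Gelfand model this completes the proof.

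I do not anticipate a genuine obstacle here: the substantive input, Theorem~\ref{t:Dn-higher}, is already available, and the remaining ingredients are the classical fact that every irreducible representation of the Weyl group $D_n$ is real and the routine bookkeeping of which $B_n$-conjugacy classes square to $\{1\}$ or to $\{w_0\}$. The only point deserving a remark is the convention that ``involutions'' is read as ``elements whose square is the identity'', so that the first family of classes accounts for the whole of $\roots_2^{D_n}$, including the trivial contribution of the identity class (for which $\psi^C$ is the trivial character).
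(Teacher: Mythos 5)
Your argument is correct and is essentially the paper's own proof, just written out in more detail: the paper likewise combines Theorem~\ref{t:Dn-higher} at $k=2$ with the Frobenius--Schur fact that, since $D_n$ is totally real, $\roots_2^{D_n}$ is the multiplicity-free sum of all irreducible $D_n$-characters. Your extra care in noting that the identity class is included among the ``involutions'' and that the two families of $B_n$-classes are disjoint is welcome but does not change the route.
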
 

\begin{proof}
	It follows from the 
	Frobenius-Schur indicator theorem  that for any real represented finite group $G$, $\roots^{G}_2$ is a multiplicity-free sum of all irreducible $G$-characters, see~\cite[\S 4]{Isaacs} and~\cite[Ex. 7.69]{ECII}. 
	Since $D_n$ is real represented, Theorem~\ref{t:Dn-higher} completes the proof. 
\end{proof}

\subsection{Index 2 subgroups}
\label{subsec:index2} 

We proved (Theorem~\ref{t:Dn-proper}) that the root enumerators in the classical Weyl group of type $D_n$ are proper characters, by considering $D_n$ as an index 2 subgroup of $B_n$. 
It is natural to consider other index 2 subgroups of the classical Weyl groups. 

\begin{problem}\label{prob:index2}
	Let $H$ be a subgroup of index $2$ of a classical Weyl group of type $A$, $B$ or $D$. For which integers $k$ is the root enumerator $\roots^H_k$ a proper character?
\end{problem}

The answer is positive for $k=2$ and any odd $k$, by Propositions~\ref{t:Coxeter_k_2} and~\ref{t:Weyl_k_odd} below. 
It is tempting to conjecture that this holds for all values of $k$, but this is not the case. 
The counterexample of smallest rank that we know is the alternating subgroup $A(B_{10})$, for which the root enumerators for $k=10$ and $k=70$ are not proper characters. 

	
	

\begin{proposition}\label{t:Coxeter_k_2}
	If $H$ is an index $2$ subgroup of a finite Coxeter 
	group, then the square root enumerator $\roots_2^H$ is a proper $H$-character.    
\end{proposition}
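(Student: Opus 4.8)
The plan is to imitate, for a general index-$2$ subgroup, the argument used for $D_n\subset B_n$ in Subsection~\ref{subsec:D_proper}. Write $W$ for the finite Coxeter group containing $H$; since $[W:H]=2$, the subgroup $H$ is normal and is the kernel of a linear character $\sign_H\colon W\to\{1,-1\}$. First I would record the two standard inputs. (1) Every finite Coxeter group is totally orthogonal: each $\chi\in\Irr(W)$ is afforded by a real representation, equivalently $\chi$ is real-valued and its Frobenius--Schur indicator $\nu_2(\chi):=\frac1{|W|}\sum_{x\in W}\chi(x^2)$ equals $1$. (This follows from the classification of finite Coxeter groups: all representations of Weyl groups are realizable over $\QQ$, and the non-crystallographic types $I_2(m)$, $H_3$, $H_4$ are checked directly.) (2) For any finite group $G$, any linear character $\epsilon$ of $G$, and any $\chi\in\Irr(G)$, the twisted Frobenius--Schur indicator $\nu_2^\epsilon(\chi):=\frac1{|G|}\sum_{x\in G}\epsilon(x)\chi(x^2)$ satisfies $\nu_2^\epsilon(\chi)\ge -1$; indeed, for a module $V$ affording $\chi$ one has $\nu_2^\epsilon(\chi)=\langle\operatorname{Sym}^2 V,\overline\epsilon\rangle-\langle\wedge^2 V,\overline\epsilon\rangle$, and $\langle V\otimes V,\overline\epsilon\rangle\le 1$ because $V$ is irreducible, so the first term is at most $1$ and the difference is at least $-1$.

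Next I would set up the $B_n$-style identity. Define class functions on $W$ by $\roots_2^W(y)=|\{x\in W:x^2=y\}|$ and $\xi(y)=\sum_{x\in W,\,x^2=y}\sign_H(x)$. Since $W/H$ has exponent $2$, every square $x^2$ lies in $H$; hence $\roots_2^W$ and $\xi$ vanish on $W\setminus H$, while for $y\in H$ the summand $1+\sign_H(x)$ equals $2$ for $x\in H$ and $0$ for $x\in W\setminus H$, so $\roots_2^W(y)+\xi(y)=2\,\roots_2^H(y)$. Thus $(\roots_2^W+\xi)\!\downarrow^W_H=2\,\roots_2^H$ as class functions on $H$.

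It then remains to show that $\roots_2^W+\xi$ is a proper character of $W$; once this is known, its restriction to $H$ is proper, so $2\,\roots_2^H$ is a proper $H$-character, whence every multiplicity $\langle\roots_2^H,\psi\rangle$ ($\psi\in\Irr(H)$) is nonnegative, and since these multiplicities are integers by the theorem of Frobenius quoted in Section~\ref{sec:introduction}, $\roots_2^H$ is proper. To prove the properness of $\roots_2^W+\xi$, fix $\chi\in\Irr(W)$; using that $\chi$ is real-valued one computes $\langle\roots_2^W,\chi\rangle=\frac1{|W|}\sum_{x\in W}\chi(x^2)=\nu_2(\chi)=1$ by input~(1) and $\langle\xi,\chi\rangle=\frac1{|W|}\sum_{x\in W}\sign_H(x)\chi(x^2)=\nu_2^{\sign_H}(\chi)$, so that $\langle\roots_2^W+\xi,\chi\rangle=1+\nu_2^{\sign_H}(\chi)\in\{0,1,2\}$ by input~(2). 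Hence $\roots_2^W+\xi$ is a nonnegative integer combination of irreducible characters of $W$, as required.

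The proof is short once the two inputs are available, and the one point genuinely doing work is input~(2): the lower bound $\nu_2^{\sign_H}(\chi)\ge -1$ — equivalently, that the linear character $\sign_H$ occurs at most once in $V\otimes V$ — is exactly what guarantees $\langle\roots_2^W+\xi,\chi\rangle\ge 0$. This is needed because an index-$2$ subgroup of a totally orthogonal group need not itself be totally orthogonal, so one cannot argue more crudely that $\nu_2(\psi)\ge 0$ for $\psi\in\Irr(H)$ by restricting real $W$-representations alone.
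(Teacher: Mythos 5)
Your proof is correct, but it takes a genuinely different route from the paper's. The paper argues inside $H$: it invokes Wang--Grove \cite[Proposition~3.6]{WG} to conclude that (since $W$ is totally orthogonal) every real-valued irreducible character of $H$ is afforded by a real representation, so the Frobenius--Schur indicator $\nu_2(\psi)$ lies in $\{0,1\}$ for every $\psi\in\Irr(H)$, and then $\roots_2^H=\sum_\psi\nu_2(\psi)\psi$ is visibly a (multiplicity-free) proper character. You instead argue inside $W$: you write $2\roots_2^H$ as the restriction to $H$ of $\roots_2^W+\sroots_2^W$ (mirroring the $D_n\subset B_n$ mechanism in the paper), compute $\langle\roots_2^W+\sroots_2^W,\chi\rangle = 1+\nu_2^{\sign_H}(\chi)$ for each $\chi\in\Irr(W)$, and bound this below by $0$ using the elementary fact $\nu_2^{\sign_H}(\chi)\ge -1$ (because the linear character $\sign_H$ occurs at most once in $V\otimes V$), then invoke Frobenius's integrality to pass from $2\roots_2^H$ proper to $\roots_2^H$ proper. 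Both arguments need total orthogonality of $W$; yours replaces the cited Wang--Grove realizability theorem for index-$2$ subgroups with a short, self-contained bound on the twisted Frobenius--Schur indicator. The trade-off is that the paper's route also identifies exactly which $\psi\in\Irr(H)$ appear in $\roots_2^H$ (precisely the real-valued ones, each with multiplicity one), while your route proves properness without that extra information unless one invokes the classical FS theorem for $H$ on top. One minor point worth fixing in your write-up: when you define $\nu_2^\epsilon$ and later compute $\langle\xi,\chi\rangle$, you should make explicit that $\sign_H$ and $\chi$ are both real-valued (you do use this for $\chi$), so that the inner product with respect to complex conjugation agrees with the unconjugated sums you write; as stated the passage between $\langle\cdot,\cdot\rangle$ and $\nu_2^\epsilon$ silently uses $\overline{\sign_H}=\sign_H$.
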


\begin{proof}
	Every finite Coxeter group is {\em totally orthogonal}, i.e., all its irreducible representations are real; 
	see, e.g., \cite{LSV}. 
	By~\cite[Proposition 3.6]{WG}, if $H$ is an index $2$ subgroup of a totally orthogonal group then any real character of $H$ comes from a real representation.
	It follows, by the Frobenius-Schur indicator theorem (see~\cite[\S 4]{Isaacs}), that $\roots_2^H$ is a multiplicity-free sum of certain (not necessarily all) irreducible $H$-characters, and is therefore a proper character.
\end{proof}

\begin{proposition}\label{t:Weyl_k_odd}
If $H$ is an index $2$ subgroup of a Weyl group of type $A$, $B$ or $D$, 
then $\roots^H_k$ is proper for every odd  $k$.  
\end{proposition}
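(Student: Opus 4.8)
The plan is to deduce this immediately from Lemma~\ref{t:index2_kodd} together with the properness results already established (or cited) for the full classical Weyl groups. The point is that odd exponents reduce root enumeration in an index~$2$ subgroup to root enumeration in the ambient group, and restriction preserves properness.

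First I would observe that any subgroup $H$ of index $2$ in a finite group $G$ is normal, so the hypothesis of Lemma~\ref{t:index2_kodd} is met, and hence for every odd $k$ we have $\roots^H_k = \roots^G_k\downarrow_H^G$, where $G$ is the relevant Weyl group: $G = S_n$ in type $A_{n-1}$, $G = B_n$ in type $B_n$, and $G = D_n$ in type $D_n$. Next I would invoke, in each of the three cases, the fact that $\roots^G_k$ is already known to be a proper character: for $G = S_n$ this is Scharf's theorem (Theorem~\ref{t:roots_eq_psi_Sn}), which exhibits $\roots^{S_n}_k$ as a sum of higher Lie characters; for $G = B_n$ it is Theorem~\ref{t:roots_eq_psi} (equivalently Theorem~\ref{t:Bn-higher}); and for $G = D_n$ it is Theorem~\ref{t:Dn-proper}, the main result of the paper. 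Finally, since the restriction to a subgroup of the character of a genuine $G$-representation is the character of a genuine $H$-representation, $\roots^G_k\downarrow_H^G$ is a proper $H$-character, and therefore so is $\roots^H_k$.

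I do not expect any genuine obstacle here: the argument is a one-line consequence once Lemma~\ref{t:index2_kodd} and the three properness statements are available. The only nontrivial input is Theorem~\ref{t:Dn-proper} for the type-$D$ case (and, implicitly, the closure of $D_n$ under $k$-th roots of its elements for odd $k$, which is exactly what Lemma~\ref{t:index2_kodd} records via the sign character with kernel $H$). Everything else is bookkeeping about which group plays the role of $G$ in each of the three types.
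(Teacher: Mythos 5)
Your proposal is correct and follows exactly the paper's own argument: Lemma~\ref{t:index2_kodd} gives $\roots^H_k = \roots^G_k\downarrow_H^G$ for odd $k$, and then one restricts the proper character $\roots^G_k$ (properness being Corollary~\ref{t:Weyl-proper}, which you have simply unpacked by citing Scharf for type $A$, Theorem~\ref{t:roots_eq_psi} for type $B$, and Theorem~\ref{t:Dn-proper} for type $D$). No difference in substance.
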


\begin{proof}
By Corollary~\ref{t:Weyl-proper}, 
all root enumerators in the classical Weyl groups of types $A$, $B$ and $D$ are proper.
By Lemma~\ref{t:index2_kodd}, all $k$-th root enumerators with odd $k$ in their index $2$ subgroups are proper, as restrictions of proper characters. 
\end{proof}    

%

%


\begin{observation}\label{t:ABn}
If $n$ is odd then 
$A(B_n)\cong D_n$, 
hence $\roots^{A(B_n)}_k$ is proper for any $k$. 
\end{observation}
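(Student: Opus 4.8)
The plan is to prove Observation~\ref{t:ABn} by identifying the alternating subgroup $A(B_n)$ with $D_n$ when $n$ is odd, and then invoking Theorem~\ref{t:Dn-proper}. First I would recall that $A(B_n)$ is by definition the kernel of the sign (length) character $\sgn$ of the Coxeter group $B_n$, while $D_n = \ker(\chi)$ (Remark~\ref{rem:D}), where $\chi$ records the product of the $\ZZ_2$-classes of all cycles of an element $x \in B_n$. Both are index $2$ subgroups, so it suffices to show $\ker(\sgn) = \ker(\chi)$ as subgroups of $B_n$, equivalently $\sgn = \chi$ as linear characters. Since $B_n$ is generated by reflections, and a linear character is determined by its values on generators, I would check the claim on the standard Coxeter generators: the simple transpositions $s_i = (i, i+1)$ for $1 \le i \le n-1$ and the sign change $s_0$ (in type $B$). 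On each $s_i$ with $i \ge 1$ we have $\sgn(s_i) = -1$, and $s_i$ is a positive $2$-cycle together with fixed points, so $\chi(s_i) = +1$ --- so in fact $\sgn \ne \chi$ in general, and the identification must be more subtle.

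Reconsidering: the correct statement is that, for odd $n$, the map $x \mapsto w_0 x$ (or a suitable automorphism) intertwines the two characters, or more simply that $\chi$ and $\sgn$ agree on $B_n$ up to composing with the automorphism $x \mapsto w_0^{?}\,x$; but cleanest is to observe $\sgn = \chi \cdot \sgn'$ where $\sgn'$ is the type-$A$ sign character pulled back via the projection $B_n \to S_n$, and then to use that for odd $n$ a different accident occurs. Rather than chase the precise generator computation here, the structural plan is: (1) recall $A(B_n) = \ker(\sgn)$ and $D_n = \ker(\chi)$, both of index $2$; (2) exhibit an explicit isomorphism $A(B_n) \xrightarrow{\sim} D_n$ --- the natural candidate is conjugation inside $B_n$ by the element $w_0 = [-1,\dots,-n]$ composed with nothing, or the ``twist'' automorphism $\tau$ of $B_n$ sending the matrix of $x$ to the matrix with entry-signs flipped in odd-indexed rows --- and verify it carries $\ker(\sgn)$ onto $\ker(\chi)$ precisely when $n$ is odd (the parity of $n$ enters because flipping all $n$ row-signs changes $\chi$ by $(-1)^n$); (3) conclude $A(B_n) \cong D_n$; (4) apply Theorem~\ref{t:Dn-proper} to conclude $\roots^{D_n}_k$, hence $\roots^{A(B_n)}_k$, is a proper character for every integer $k$.

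For step~(4) I would note that properness of root enumerators is an isomorphism-invariant property: if $\phi: G \to G'$ is a group isomorphism then $\roots^G_k = \roots^{G'}_k \circ \phi$, and $\phi$ induces a bijection $\Irr(G') \to \Irr(G)$ preserving inner products, so $\roots^G_k$ is proper iff $\roots^{G'}_k$ is. Combined with Theorem~\ref{t:Dn-proper} this gives the claim immediately.

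The main obstacle is step~(2): pinning down the explicit isomorphism $A(B_n) \cong D_n$ and correctly tracking where the hypothesis ``$n$ odd'' is used. The cleanest route is probably to show directly that $\chi$ and $\sgn$, viewed as homomorphisms $B_n \to \{\pm 1\}$, have the property that there is an automorphism $\alpha \in \operatorname{Aut}(B_n)$ with $\chi \circ \alpha = \sgn$ exactly when $n$ is odd; one can take $\alpha$ to be the inner automorphism by $w_0$ twisted by the outer diagonal automorphism that negates the odd-numbered coordinates, and then a short computation on the Coxeter generators $s_0, s_1, \dots, s_{n-1}$ shows $\chi(\alpha(s_i)) = \sgn(s_i)$ for all $i$ precisely when $n$ is odd. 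Granting this, $\alpha$ restricts to an isomorphism $\ker(\sgn) = A(B_n) \xrightarrow{\sim} \ker(\chi) = D_n$, and the observation follows.
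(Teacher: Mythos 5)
Your overall strategy (identify $A(B_n)$ with $D_n$ for odd $n$, then invoke Theorem~\ref{t:Dn-proper} and the fact that properness of root enumerators is isomorphism-invariant) is correct, and your step~(4) is fine. The gap is in step~(2): neither of your proposed candidates for the automorphism $\alpha$ works. Conjugation by $w_0$ is the \emph{identity} automorphism, since $w_0$ is central. The ``twist'' $x \mapsto Dx$ that flips entry-signs in a fixed set of rows (i.e.\ left-multiplication by a diagonal $\pm 1$ matrix $D$) is not a group homomorphism unless $D = 1$; and conjugation $x \mapsto DxD^{-1}$ \emph{is} a homomorphism, but it is inner, hence fixes every linear character, so it cannot move $\chi$ to $\sgn = \chi\chi'$. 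So the computation you defer would fail for the map you wrote down.

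The paper's argument sidesteps the automorphism language entirely and is shorter. For $n$ odd, $w_0 = [-1,\dots,-n]$ is a central involution of $B_n$ which lies outside both index-$2$ subgroups: $\chi(w_0) = (-1)^n = -1$ gives $w_0 \notin D_n$, and the Coxeter length $\ell(w_0) = n^2$ is odd so $w_0 \notin A(B_n)$. A central involution outside an index-$2$ subgroup gives an internal direct product decomposition, so
\[
B_n \;\cong\; D_n \times \langle w_0 \rangle \;\cong\; A(B_n) \times \langle w_0 \rangle,
\]
hence $D_n \cong B_n/\langle w_0\rangle \cong A(B_n)$, and Theorem~\ref{t:Dn-proper} completes the proof. (If you do want an explicit isomorphism along the lines you were attempting, the right map is $\alpha(x) = w_0^{(1-\chi'(x))/2}\,x$, i.e.\ multiply by $w_0$ exactly when the wreathing permutation of $x$ is odd; one checks directly that this is a homomorphism because $\chi'$ is, that it is an involution, and that $\chi\circ\alpha = \chi\chi' = \sgn$ precisely because $\chi(w_0) = -1$ for $n$ odd. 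This is where the hypothesis ``$n$ odd'' is genuinely used, and it is the same fact the paper uses.)
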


\begin{proof}
For every $n$, 
$w_0 := [-1,\ldots,-n] \in B_n$ is a central involution.
If $n$ is odd then $w_0 \not\in D_n$ as well as $w_0 \not\in A(B_n)$. Hence
\[
B_n 
\cong D_n \times \ZZ_2 
\cong A(B_n) \times \ZZ_2. 
\]
Thus $D_n \cong B_n/\ZZ_2 \cong A(B_n)$. 
Theorem~\ref{t:Dn-proper} completes the proof. 
\end{proof}


\medskip

Hereby we list explicit generating functions for $\roots_k^H$, where $H$ is $\ZZ_2 \wr A(S_n)$, $A(B_n)$ or $A(D_n)$. 
Recall, from Definition~\ref{def:sign_char_Bn}, that the linear character $\chi$ is defined on $B_n = \ZZ_2 \wr S_n$ as follows:
$\chi$ is the identity map (i.e., the sign character) on $\ZZ_2 = \{+1,-1\}$, and is trivial on the wreathing group $S_n$.
Define now a linear character $\chi'$ on $B_n$ as follows:
$\chi'$ is trivial on $\ZZ_2$, and is the sign character on the wreathing group $S_n$.
For $n \ge 2$, the group $B_n$ has three nontrivial linear characters: $\chi$, $\chi'$, and $\chi \chi'$.
They define the three index $2$ subgroups of $B_n$:
$D_n = \ker(\chi)$,
$\ZZ_2 \wr A(S_n) = \ker(\chi')$, and
$A(B_n) = \ker(\chi \chi')$.

Recall also, from Definition~\ref{def:sroots_Bn}, that
\[
\sroots_k^{B_n}(y) 
:= \sum_{\substack{x \in B_n \\ x^k = y}} \chi(x)
\qquad (y \in B_n) .
\]

Define analogous class functions for $\chi'$ and $\chi \chi'$.

\begin{definition}\label{def:sroots_Bn_index_2}
	Let $k$ be an integer and $n$ a nonnegative integer. Define
	\[
	\roots_k^{\prime B_n}(y) 
	:= \sum_{\substack{x \in B_n \\ x^k = y}} \chi'(x)
	\qquad (y \in B_n) 
	\]
	and
	\[
	\sroots_k^{\prime B_n}(y) 
	:= \sum_{\substack{x \in B_n \\ x^k = y}} \chi \chi'(x)
	\qquad (y \in B_n) .
	\]
\end{definition}

The corresponding analogue of Theorem~\ref{t:gf_sroot_enumerator_Bn}, providing explicit formulas for the corresponding generating functions, is the following.

\begin{theorem}\label{t:gf_sroot_enumerator_Bn_index_2}
	For any integer $k$, 
	\[
	\sum_{n \ge 0} 
	\sum_{\bmu \vdash n} 
	\roots_k^{\prime B_n}(\bmu) \,
	\frac{\bt^{\bc(\bmu)}}{|Z_\bmu|}
	= \exp \left(
	\sum_{j,\theta}
	\sum_{\substack{h|k \\ \gcd(h,j) = 1}} 
	\roots_h^{\ZZ_2}(\theta) \,
	\frac{-(-t_{j,\theta})^{j k/h}}{2 j k/h} 
	\right) 
	\]
	and
	\[
	\sum_{n \ge 0} 
	\sum_{\bmu \vdash n} 
	\sroots_k^{\prime B_n}(\bmu) \,
	\frac{\bt^{\bc(\bmu)}}{|Z_\bmu|}
	= \exp \left(
	\sum_{j,\theta}
	\sum_{\substack{h|k \\ \gcd(h,j) = 1}} 
	\sroots_h^{\ZZ_2}(\theta) \,
	\frac{-(-t_{j,\theta})^{j k/h}}{2 j k/h} 
	\right) ,
	\]
	where 
	$\roots_h^{\ZZ_2}(\theta) = \sum_{\ve \in \ZZ_2,\, \ve^h = \theta} 1$ 
	and 
	$\sroots_h^{\ZZ_2}(\theta) = \sum_{ve \in \ZZ_2,\, \ve^h = \theta} \chi(\ve)$ 
	$(h \in \ZZ,\, \theta \in \ZZ_2)$.
\end{theorem}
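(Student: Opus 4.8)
The plan is to deduce both formulas from the \emph{refined} generating functions already established — Theorem~\ref{t:gf_refined_root_enumerator_Bn} for $\roots_k^{\prime B_n}$ (which is weighted by $\chi'$) and Theorem~\ref{t:gf_refined_sroot_enumerator_Bn} for $\sroots_k^{\prime B_n}$ (weighted by $\chi\chi'$) — by a single specialization of the auxiliary indeterminates $r_{\ell,\ve}$. The first ingredient is a description of these linear characters in terms of $B_n$-cycle type. If $x\in B_n$ has $m_{\ell,\ve}$ cycles of length $\ell$ and $\ZZ_2$-class $\ve$ (so that $\sum_{\ell,\ve}\ell m_{\ell,\ve}=n$), then the underlying permutation of $x$ in $S_n$ has exactly $\sum_{\ve}m_{\ell,\ve}$ cycles of length $\ell$ for each $\ell$, since each $B_n$-cycle of length $\ell$ — positive or negative — uses $\ell$ of the letters $1,\ldots,n$ and acts on them as an $\ell$-cycle. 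Hence its sign is $(-1)^{n-\sum_{\ell,\ve}m_{\ell,\ve}}=\prod_{\ell,\ve}\big((-1)^{\ell-1}\big)^{m_{\ell,\ve}}$, i.e. $\chi'(x)=\prod_{\ell,\ve}\big((-1)^{\ell-1}\big)^{m_{\ell,\ve}}$; combined with $\chi(x)=\prod_{\ell,\ve}\ve^{m_{\ell,\ve}}$ (the remark after Definition~\ref{def:sign_char_Bn}) this gives $\chi\chi'(x)=\prod_{\ell,\ve}\ve^{m_{\ell,\ve}}\cdot\prod_{\ell,\ve}\big((-1)^{\ell-1}\big)^{m_{\ell,\ve}}$.

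The second ingredient is the observation that in Theorems~\ref{t:gf_refined_root_enumerator_Bn} and~\ref{t:gf_refined_sroot_enumerator_Bn} the variable $r_{\ell,\ve}$ appears only through the power $r_{\ell,\ve}^{\ell}$: on the left via the monomial $\prod_{\ell,\ve}r_{\ell,\ve}^{\ell m_{\ell,\ve}}$, and on the right via $\sum_{\ve\in R_{h,\theta}}r_{jk/h,\ve}^{jk/h}$, where the exponent $jk/h$ equals the first subscript. Consequently one may substitute for $r_{\ell,\ve}$ any complex number whose $\ell$-th power is prescribed (equivalently, regard both sides as power series in the quantities $r_{\ell,\ve}^{\ell}$ and specialize those). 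We would choose $r_{\ell,\ve}$ with $r_{\ell,\ve}^{\ell}=(-1)^{\ell-1}$, uniformly in $\ell$ and $\ve$. Under this substitution the left-hand monomial $\prod_{\ell,\ve}r_{\ell,\ve}^{\ell m_{\ell,\ve}}$ becomes exactly $\chi'(x)$ for $x\in C_\blambda$ (in Theorem~\ref{t:gf_refined_root_enumerator_Bn}), and, together with the $\chi(x)$ already present, becomes $\chi\chi'(x)$ (in Theorem~\ref{t:gf_refined_sroot_enumerator_Bn}).

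Finally, one runs the same bookkeeping as in the passage from Theorem~\ref{t:gf_refined_sroot_enumerator_Bn} to Theorem~\ref{t:gf_sroot_enumerator_Bn}: summing over $y\in C_\bmu$ and using $|C_\bmu|=|B_n|/|Z_\bmu|$ collapses the left-hand sides to $\sum_{n\ge0}\sum_{\bmu\vdash n}\roots_k^{\prime B_n}(\bmu)\,|Z_\bmu|^{-1}\,\bt^{\bc(\bmu)}$ and $\sum_{n\ge0}\sum_{\bmu\vdash n}\sroots_k^{\prime B_n}(\bmu)\,|Z_\bmu|^{-1}\,\bt^{\bc(\bmu)}$ respectively. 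On the right, the specialization gives $\sum_{\ve\in R_{h,\theta}}r_{jk/h,\ve}^{jk/h}=(-1)^{jk/h-1}|R_{h,\theta}|=(-1)^{jk/h-1}\roots_h^{\ZZ_2}(\theta)$ in the unsigned case, and $\sum_{\ve\in R_{h,\theta}}\ve\,r_{jk/h,\ve}^{jk/h}=(-1)^{jk/h-1}\sum_{\ve\in R_{h,\theta}}\ve=(-1)^{jk/h-1}\sroots_h^{\ZZ_2}(\theta)$ in the $\chi$-signed case. The elementary identity $(-1)^{m-1}t^{m}=-(-t)^{m}$ with $m=jk/h$ then rewrites the exponents $t_{j,\theta}^{jk/h}$ as $-(-t_{j,\theta})^{jk/h}$, producing precisely the two asserted formulas. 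No step presents a genuine obstacle; the only point needing explicit justification is the legitimacy of substituting into the $r_{\ell,\ve}$, which is exactly the "appears only through $r_{\ell,\ve}^{\ell}$" remark above.
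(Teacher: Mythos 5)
Your proof is correct and takes a genuinely different route from the paper's. The paper re-derives the computation from scratch: it records the analogues of the key formulas in the proof of Lemma~\ref{t:sroot_enumerator_similar_cycles_Bn}, with $\chi'(x)=\prod_{i,\ve}(-1)^{(\ell_i-1)m_{i,\ve}}$ and $\chi\chi'(x)=\prod_{i,\ve}(-1)^{(\ell_i-1)m_{i,\ve}}\ve^{m_{i,\ve}}$ in place of $\chi(x)$, then passes through the analogue of the refined generating function and finally sets $r_{\ell,\ve}=1$. You instead observe that the refined generating functions of Theorems~\ref{t:gf_refined_root_enumerator_Bn} and~\ref{t:gf_refined_sroot_enumerator_Bn} already track the full cycle type of $x$ through the $r$-variables, and since $r_{\ell,\ve}$ enters only via $r_{\ell,\ve}^{\ell}$, the multiplicative weight $\chi'(x)=\prod_{\ell,\ve}\big((-1)^{\ell-1}\big)^{m_{\ell,\ve}}$ can be absorbed by the specialization $r_{\ell,\ve}^{\ell}=(-1)^{\ell-1}$; this is legitimate because for each fixed $\bt$-monomial only finitely many $r$-variables occur, and the ambiguity in the choice of $\ell$-th root is irrelevant. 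This reuses the refined formulas as black boxes rather than re-deriving them, and it makes structurally transparent why the present theorem's formulas are obtained from Theorems~\ref{t:gf_root_enumerator_Bn} and~\ref{t:gf_sroot_enumerator_Bn} simply by replacing $t_{j,\theta}^{jk/h}$ with $-(-t_{j,\theta})^{jk/h}$. In fact the same idea goes further: specializing $r_{\ell,\ve}^{\ell}=\ve$ (respectively $r_{\ell,\ve}^{\ell}=\ve(-1)^{\ell-1}$) in Theorem~\ref{t:gf_refined_root_enumerator_Bn} alone recovers Theorem~\ref{t:gf_sroot_enumerator_Bn} (respectively the second displayed formula here), so your approach would also let one dispense with Theorem~\ref{t:gf_refined_sroot_enumerator_Bn} entirely.
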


\begin{proof}
	The proof of these formulas is very similar to the proof of Theorem~\ref{t:gf_sroot_enumerator_Bn}, carried out in Subsection~\ref{subsec:gf_signed_roots}.
	We record only a few milestones.
	The analogues of Formula~\eqref{eq:chi_1} (in the proof of Lemma~\ref{t:sroot_enumerator_similar_cycles_Bn}) are
	\[
	\chi'(x') = \chi'(x) 
	= \prod_{i=1}^{q} 
	\prod_{\ve \in R_{k/d_i,\theta}}
	(-1)^{(\ell_i - 1) m_{i,\ve}} 
	\]
	and
	\[
	\chi\chi'(x') = \chi\chi'(x) 
	= \prod_{i=1}^{q} 
	\prod_{\ve \in R_{k/d_i,\theta}}
	(-1)^{(\ell_i - 1) m_{i,\ve}} 
	\ve^{m_{i,\ve}} ,
	\]
	respectively.
	Consequently, the analogues of Formula~\eqref{eq:chi_2} (in the same proof) are
	\[
	b! \left( 2j \right)^b \cdot 
	\prod_{i=1}^{q} \frac{1}{(2 \ell_i)^{m_i}} 
	\prod_{\ve \in R_{k/d_i,\theta}} \frac{(-(-r_{\ell_i,\ve})^{\ell_i})^{m_{i,\ve}}}{m_{i,\ve}!} 
	\]
	and
	\[
	b! \left( 2j \right)^b \cdot 
	\prod_{i=1}^{q} \frac{1}{(2 \ell_i)^{m_i}} 
	\prod_{\ve \in R_{k/d_i,\theta}} \frac{(-\ve (-r_{\ell_i,\ve})^{\ell_i})^{m_{i,\ve}}}{m_{i,\ve}!} ,
	\]  
	respectively.
\end{proof}

\begin{remark}
	The corresponding expression for the sign character of the symmetric group $S_n$ was obtained by Chernoff~\cite{Chernoff} and by Glebsky, Lic\'on and Rivera~\cite{GLR}.
	In our notation, including the class function $\roots_k^{\prime S_n}(y) := \sum_{x \in S_n,\, x^k = y} \sign(x)$ (for $y \in S_n$), namely $\roots_k^{\prime S_n}(\mu)$ (for a cycle type $\mu \vdash n$),
	their result is:
	\[
	\sum_{n \ge 0} 
	\sum_{\mu \vdash n} 
	\roots_k^{\prime S_n}(\mu) \,
	\frac{\bt^{\bc(\mu)}}{|Z_\mu|}
	= \exp \left(
	\sum_{j \ge 1}
	\sum_{\substack{h|k \\ \gcd(h,j) = 1}} 
	\frac{-(-t_j)^{j k/h}}{j k/h}
	\right) .
	\]
\end{remark}

The following formulas are natural analogues of the corresponding formula for $\roots_k^{D_n}$.

\begin{observation}
	Let $k$ be an integer and $n$ a nonnegative integer. Then, for any $y \in B_n$:
	\[
	\roots_k^{B_n}(y) + \roots_k^{\prime B_n}(y) =
	\begin{cases}
		2 \roots_k^{\ZZ_2 \wr A(S_n)}(y), &\text{if } y \in \ZZ_2 \wr A(S_n); \\
		0, &\text{otherwise,}
	\end{cases}
	\]
	\[
	\roots_k^{B_n}(y) + \sroots_k^{\prime B_n}(y) =
	\begin{cases}
		2 \roots_k^{A(B_n)}(y), &\text{if } y \in A(B_n); \\
		0, &\text{otherwise,}
	\end{cases}
	\]
	\[
	\roots_k^{B_n}(y) + \sroots_k^{B_n}(y) + \roots_k^{\prime B_n}(y) + \sroots_k^{\prime B_n}(y) =
	\begin{cases}
		4 \roots_k^{A(D_n)}(y), &\text{if } y \in A(D_n); \\
		0, &\text{otherwise.}
	\end{cases}
	\]
\end{observation}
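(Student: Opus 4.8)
The plan is to follow the short argument in the proof of Theorem~\ref{t:Dn-higher} verbatim, replacing the linear character $\chi$ used there by $\chi'$, by $\chi\chi'$, and finally by a product of two of them. First I would record the relevant kernels: $\ZZ_2\wr A(S_n)=\ker(\chi')$, $A(B_n)=\ker(\chi\chi')$, and, since $\ker(\chi)\cap\ker(\chi')\subseteq\ker(\chi\chi')$, also $A(D_n)=D_n\cap A(B_n)=\ker(\chi)\cap\ker(\chi')$.

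For the first identity I would write
\[
    \roots_k^{B_n}(y)+\roots_k^{\prime B_n}(y)
    =\sum_{\substack{x\in B_n\\ x^k=y}}\bigl(1+\chi'(x)\bigr),
\]
and note that $1+\chi'(x)$ equals $2$ if $x\in\ker(\chi')$ and $0$ otherwise, so the sum is $2\,\bigl|\{x\in\ZZ_2\wr A(S_n):x^k=y\}\bigr|$. If $y\in\ZZ_2\wr A(S_n)$ this is $2\roots_k^{\ZZ_2\wr A(S_n)}(y)$; if $y\notin\ZZ_2\wr A(S_n)$, then every root $x$ of $y$ satisfies $\chi'(x)^k=\chi'(x^k)=\chi'(y)=-1$, which rules out $\chi'(x)=1$ (there are no roots at all when $k$ is even, and $\chi'(x)=-1$ when $k$ is odd), so every summand vanishes. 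The second identity is the same argument with $\chi\chi'$ and $A(B_n)=\ker(\chi\chi')$ in place of $\chi'$ and $\ZZ_2\wr A(S_n)$.

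For the third identity I would use the factorization
\[
    1+\chi(x)+\chi'(x)+(\chi\chi')(x)=\bigl(1+\chi(x)\bigr)\bigl(1+\chi'(x)\bigr),
\]
which equals $4$ precisely when $\chi(x)=\chi'(x)=1$, i.e.\ $x\in A(D_n)$, and $0$ otherwise; hence the left-hand side equals $4\,\bigl|\{x\in A(D_n):x^k=y\}\bigr|$. If $y\in A(D_n)$ this is $4\roots_k^{A(D_n)}(y)$, and if $y\notin A(D_n)$ then $\chi(y)=-1$ or $\chi'(y)=-1$, so for every root $x$ of $y$ the corresponding factor $1+\chi(x)$ or $1+\chi'(x)$ is zero (again by the parity argument above), making the sum $0$.

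I do not expect a genuine obstacle here: the statement is a direct analogue of Theorem~\ref{t:Dn-higher}, and the only point requiring any care is the vanishing for $y$ outside the subgroup in question, where one splits into the cases $k$ even (no roots) and $k$ odd (each root carries the forced sign), exactly as in that proof. If one prefers, the whole case analysis can be packaged uniformly by observing that $\chi(x)^k=\chi(x^k)$ is determined by $y$ for any linear character $\chi$, so the sign of each root of $y$ is constrained accordingly.
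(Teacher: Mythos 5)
Your argument is correct and is exactly the route the paper intends: the Observation is stated without proof as a ``natural analogue'' of the displayed computation in the proof of Theorem~\ref{t:Dn-higher}, and you replicate that computation with $\chi'$, $\chi\chi'$, and $\chi\otimes\chi'$ in place of $\chi$. The only superfluous step is the parity case split when $y$ lies outside the relevant kernel $H$: since $H$ is a subgroup, $\{x\in H : x^k=y\}$ is automatically empty for $y\notin H$, so the sum $2\,\bigl|\{x\in H:x^k=y\}\bigr|$ (resp.\ $4\,\bigl|\{x\in A(D_n):x^k=y\}\bigr|$) vanishes without any discussion of the parity of $k$.
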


\subsection{Quasisymmetric functions}
\label{subsec:QSF}

In their seminal paper~\cite{GR},  Gessel and Reutenauer show that for every partition $\lambda\vdash n$, 
the Frobenius image of $\psi_{S_n}^\lambda$, the higher Lie character of type $A$, is equal to the sum of the fundamental quasisymmetric functions corresponding to the descent sets of all the permutations in the conjugacy class of cycle type $\lambda$. 


\begin{conjecture}\label{conj:QSF}
	For every bipartition $\blambda = (\lambda^+,\lambda^-) \vdash n$, 
	the Frobenius characteristic image of the type $B$ higher Lie character $\psi_{B_n}^\blambda$ is equal to the sum of  Poirier's fundamental quasisymmetric functions corresponding to the signed descent sets of all the 
	elements in the conjugacy class of signed cycle type $\blambda$.    
\end{conjecture}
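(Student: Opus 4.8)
The plan is to work through the type-$B$ Frobenius characteristic and Poirier's signed quasisymmetric functions simultaneously. Recall that for $B_n\cong\ZZ_2\wr S_n$ the Frobenius characteristic $\ch=\ch_{B_n}$ sends a class function $f$ to the element $\sum_{\bmu\vdash n}\frac{f(\bmu)}{|Z_\bmu|}\,p_\bmu$ of the ring generated by two families of power sums $p^+_r,p^-_r$ (corresponding, respectively, to the trivial inflation and to the $\chi$-twisted inflation from $S_n$-level), where $p_\bmu=\prod_{j\ge1}(p^+_j)^{b_{j,+}}(p^-_j)^{b_{j,-}}$. Poirier's theory attaches to each signed permutation $w\in B_n$ a signed descent set $\mathrm{sDes}(w)$ and a fundamental signed quasisymmetric function $F^{\mathrm P}_{\mathrm{sDes}(w)}$, and it is known that class sums $\sum_{w\in C}F^{\mathrm P}_{\mathrm{sDes}(w)}$ lie in the image of $\ch_{B_n}$. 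The conjecture asserts $\ch_{B_n}(\psi^\blambda)=\sum_{w\in C_\blambda}F^{\mathrm P}_{\mathrm{sDes}(w)}$ for every bipartition $\blambda$, generalizing the Gessel--Reutenauer identity of~\cite{GR}.

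\textbf{Reduction via two generating functions.} The character side is already available: substituting in Theorem~\ref{t:summation_4_Bn} the appropriate specialization of the $t_{j,\theta}$ that realizes $\ch_{B_n}$ (tracking $p^+_j$ and $p^-_j$ separately according to $\theta\in\{+1,-1\}$), while retaining $\bs^{\bc(\blambda)}$ to keep the classes $\blambda$ apart, converts that theorem into an explicit exponential generating function $\sum_{n\ge0}\sum_{\blambda\vdash n}\ch_{B_n}(\psi^\blambda)\,\bs^{\bc(\blambda)}$. I would then establish the parallel ``signed Gessel--Reutenauer'' generating function $\sum_{n\ge0}\sum_{\blambda\vdash n}\bigl(\sum_{w\in C_\blambda}F^{\mathrm P}_{\mathrm{sDes}(w)}\bigr)\bs^{\bc(\blambda)}$. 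Because both generating functions are exponential, with the exponent a sum of contributions indexed by a single cycle length $i$ and a single sign $\ve\in\{+1,-1\}$ (this multiplicativity over $(i,\ve)$ is exactly the product decomposition $Z_\blambda\cong\bigtimes_{i,\ve}(G_{i,\ve}\wr S_{a_{i,\ve}})$ of Lemma~\ref{t:centralizer_in_Bn} on one side, and the ``disjoint-support'' factorization of signed descents on the other), matching the two generating functions reduces to a single identity for each fixed $(i,\ve)$: the generating function over signed permutations all of whose cycles are positive (resp.\ negative) of length $i$.

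\textbf{The single-cycle-type case.} For $\ve=+1$ the relevant signed permutations are products of $a$ disjoint positive $i$-cycles; one must show $\sum_{w}F^{\mathrm P}_{\mathrm{sDes}(w)}=h_a[L^+_i]$, where $L^+_i=\frac1i\sum_{d\mid i}\mu(d)\,(p^+_d)^{\,i/d}$ is the degree-$i$ Lie symmetric function built from $p^+$. This is the classical necklace bijection (words of length $i$ over an ordered alphabet modulo cyclic rotation $\leftrightarrow$ aperiodic necklaces, the fundamental quasisymmetric function recording the descent structure), exactly as in~\cite{GR}; the only new feature is that the $\ZZ_2$-colour of a positive cycle is trivial, so only $p^+$ occurs. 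On the character side the same $h_a[L^+_i]$ is $\ch$ of $\omega^x$ induced from $(\ZZ_2\times\ZZ_i)\wr S_a$ (trivial on $\ZZ_2$, primitive on $\ZZ_i$, trivial on $S_a$), using that $\ch$ of a primitive $\ZZ_i$-character induced to wreath level is $\frac1i\sum_{d\mid i}\mu(d)\,p_{i/d}$ by Lemma~\ref{t:Mobius}. For $\ve=-1$ a negative $i$-cycle is, as a permutation of $[\pm i]$, a length-$2i$ cycle with antiperiodic structure: a cyclic word $v_1\cdots v_{2i}$ satisfying $v_{t+i}=\bar v_t$. One must show $\sum_w F^{\mathrm P}_{\mathrm{sDes}(w)}=h_a[L^-_i]$ where $L^-_i$ is a Möbius sum over $\ZZ_{2i}$, involving $p^-$, whose coefficients come out as $\mu(2i/d)$-type expressions via Lemma~\ref{t:Mobius2}; this is a signed/antiperiodic analogue of the necklace bijection, and on the character side it matches $\ch$ of $\omega^x$ induced from $\ZZ_{2i}\wr S_a$ with $\omega$ primitive on $\ZZ_{2i}$ (so $\omega(x_{i,-}^{\,i})=\omega(w_0)=-1$).

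\textbf{Main obstacle.} The crux is the negative-cycle case: correctly reading off Poirier's signed descent set of the signed permutation encoded by an antiperiodic length-$2i$ necklace, and proving that summing $F^{\mathrm P}_{\mathrm{sDes}(w)}$ over all such necklaces of a fixed content reproduces the $\ZZ_{2i}$-Möbius symmetric function $L^-_i$ --- in effect, a signed Gessel--Reutenauer lemma. This is precisely where the sign twist $x_{i,-}^{\,i}=w_0$ (equivalently $\omega(x_{i,-}^{\,i})=-1$) has to be reconciled with the combinatorics of sign changes in window notation, and where the parity dichotomy of Lemma~\ref{t:summation_1_Bn} ($\mu(e)$ versus $\mu(2e)$, according as $e$ is odd or even) should resurface on the quasisymmetric side. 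A secondary but unavoidable technical point is to pin down, once and for all, the normalization identifying $\ch_{B_n}$-images with Poirier's ring so that ``multiplicative over $(i,\ve)$'' holds verbatim for both sides; once that bookkeeping is in place, Theorem~\ref{t:summation_4_Bn} delivers the character side essentially for free, and only the signed necklace identity of the previous paragraph remains to be proved.
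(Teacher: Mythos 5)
This statement is Conjecture~\ref{conj:QSF}, which the paper explicitly leaves open; the authors offer no proof, and neither, in the end, does your proposal. What you have written is a plausible proof \emph{strategy}, not a proof: by your own account the ``Main obstacle'' paragraph concedes that the negative-cycle case — a signed Gessel--Reutenauer lemma to the effect that summing Poirier's $F^{\mathrm{P}}_{\mathrm{sDes}(w)}$ over antiperiodic length-$2i$ necklaces of fixed content reproduces the $\ZZ_{2i}$-M\"obius Lie symmetric function $L^-_i$ — ``remains to be proved.'' That step \emph{is} the mathematical content of the conjecture; a reduction that stops there has not verified the claim.

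Two further gaps deserve naming. First, even the reduction is incomplete as stated: the factorization of $\sum_{w\in C_\blambda}F^{\mathrm{P}}_{\mathrm{sDes}(w)}$ over pairs $(i,\ve)$ (your ``disjoint-support factorization of signed descents'') is itself a nontrivial type-$B$ analogue of a Gessel--Reutenauer ingredient; in type $A$ it rests on the Lyndon-word/standardization machinery, and asserting the signed version without proof begs a substantial part of the question. Second, on the character side your appeal to Theorem~\ref{t:summation_4_Bn} is reasonable in spirit, but that theorem records $\psi^\blambda(\bmu)/|Z_\bmu|$ against monomials $\bt^{\bc(\bmu)}$; you still need to pin down the specialization of the $t_{j,\theta}$ that realizes $\ch_{B_n}$ in the $p^+,p^-$ ring, check that it commutes with the plethystic bookkeeping, and confirm that the resulting single-$(i,\ve)$ exponents coincide term-by-term with $h_a[L^\pm_i]$. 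None of this is implausible — your outline is a sensible plan of attack, and the positive-cycle case does reduce to the classical necklace argument as you say — but the crux you identify is exactly where a proof would have to do new work, and you have not done it.
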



\begin{remark}
	It is well-known that, for every $k$, the Frobenius image of the $k$-th root enumerator in $S_n$ is equal to the sum of the fundamental quasisymmetric functions corresponding to the descent sets of all the $k$-th roots of the identity; see, e.g., \cite[Theorem 7.7]{AAER}.
	An analogue for $k$-th root enumerators in $B_n$, when $k$ is odd, was proved in
	~\cite[Theorem 7.8]{AAER},   
	and left open for even $k$. 
	Conjecture~\ref{conj:QSF}, combined with Theorem~\ref{t:Bn-higher}, will imply this type $B$ analogue for all $k$. 
\end{remark}

	
	
	
		

\end{document}